\setlist[enumerate]{leftmargin=.5in}
\setlist[itemize]{leftmargin=.5in}
\newtheorem{cor}{Corollary}
\newtheorem{thm}{Theorem}
\newtheorem{lem}{Lemma}
\newtheorem{define}{Definition}
\newtheorem{fixedtheorem}{Theorem}
\newcommand{\spans}{\operatorname{span}}
\newcommand{\st}{\,|\,}
\newcommand{\gattract}{\mathbb{A}}
\begin{document}

\title{Interplay between Contractivity and Monotonicity for Reaction Networks
}

\author{Alon~Duvall\thanks{Department of Mathematics, Northeastern University, Boston, MA (\texttt{duvall.a@northeastern.edu}).}
\and M.~Ali~Al-Radhawi\thanks{Department of Electrical and Computer Engineering and Department of Bioengineering, Northeastern University, Boston, MA (\texttt{malirdwi@northeastern.edu}).}
\and Dhruv~D.~Jatkar\thanks{Department of Bioengineering, Northeastern University, Boston, MA (\texttt{jatkar.d@northeastern.edu}).}
\and Eduardo~D.~Sontag\thanks{Department of Electrical and Computer Engineering and Department of Bioengineering, Northeastern University, Boston, MA (\texttt{e.sontag@northeastern.edu}).}}

\maketitle

\begin{abstract}
 We establish a new relationship between monotonicity and contractivity and use this connection to describe a new general class of weakly contractive reaction networks. The new class is characterized by the stoichiometry matrix of the reaction network admitting a precise matrix factorization that can be verified computationally. Reaction networks in this class are weakly contractive, implying global convergence to equilibria under appropriate technical conditions. Furthermore, we describe the novel subclass of cross-polytope networks. We also show that our results provide a unified proof of global convergence for several classes of networks previously studied in the literature. The practical relevance of the results is demonstrated by examples from systems biology and signaling pathways. 
\end{abstract}

\section{Introduction}

Formal models known as reaction networks provide useful descriptions of various processes, such as in biology and chemistry. Examples include modeling DNA replication kinetics \cite{https://doi.org/10.1002/bit.20617}, futile cycles in molecular systems biology \cite{Angeli2010-hj}, the Extracellular Regulated Kinase signaling pathway \cite{10.1371/journal.pcbi.1007681}, and post-translational modification cycles \cite{cite-key}. For general introductions to reaction network theory, see \cite{erdi1989mathematical} or \cite{Feinberg}.

A fundamental problem in these models is predicting qualitative behavior. Significant effort has gone into developing mathematical frameworks and theorems describing their qualitative dynamics. Key results include deficiency theory for stability and uniqueness of equilibria \cite{Feinberg}, conditions for multistationarity \cite{doi:10.1137/15M1034441}, and bifurcation criteria \cite{Banaji_2023}. Of particular interest is whether a reaction network's dynamics converge to a unique equilibrium. We focus on two properties closely tied to convergence: contractivity, which ensures that trajectories move closer together over time, and monotonicity, which imposes an ordering on system dynamics, constraining how solutions evolve relative to each other.

Both contractivity and monotonicity have extensive literature \cite{smith2008monotone} \cite{7039986} \cite{LOHMILLER1998683} \cite{9799744}, \cite{SIMPSONPORCO201474} with applications to safety constraints \cite{jafarpour2023monotonicity}, network systems \cite{9403888}, excitatory Hopfield neural networks \cite{9961864}, and biological networks \cite{alradhawi2023structural}. These methods have also been used separately to analyze reaction networks \cite{article}, \cite{Angeli2010-hj}, \cite{BANAJI20131359}, \cite{doi:10.1137/120898486}, \cite{doi:10.1080/14689360802243813}. Contractivity has been applied to reaction networks, such as in \cite{article}, where a certain class of networks is shown to have a negative logarithmic norm, implying contractive properties. Meanwhile, \cite{Angeli2010-hj}, \cite{BANAJI20131359}, and \cite{doi:10.1137/120898486} use monotonicity to establish convergence. Contractivity has also been linked to Lyapunov functions: \cite{7097666} presents algorithms for constructing piecewise linear Lyapunov functions for reaction networks, and later work connects these functions to contractivity \cite{alradhawi2023structural}.

In this work, we establish a connection between monotonicity and contractivity, allowing us to prove weak contractivity (defined below) for a class of reaction networks via monotonicity. Weak contractivity limits the possible long-term behaviors of a system. Notably, in several realistic biochemical systems, weak contractivity guarantees that each stoichiometric compatibility class admits a unique attracting equilibrium. Our main results establish weak contractivity for new classes of reaction networks. In particular, we establish weak contractivity for the globally convergent networks considered in \cite{doi:10.1137/120898486}, \cite{BANAJI20131359}, \cite{Angeli2010-hj}, \cite{leenheer_monotone_reaction_networks}, and \cite{EITHUN20171}. Additionally, in \cite{doi:10.1137/120898486}, the authors suggest that unifying the notions of monotonicity in species coordinates and reaction coordinates is an important direction for future research. We take a step in this direction by showing that any reaction-coordinate cooperative network can be made species-coordinate monotone through the addition of a single dummy species.

\subsection{New class of networks} We now state one of our core theorems, which generalizes several results from the literature. Before doing so, we introduce a few definitions.

Let \( S \) be the set of real matrices (of any dimension) whose entries lie in \( \{ -1, 0, 1 \} \), and in which each column has at most two nonzero entries. Define \( S^t \) to be the set of matrices whose transposes belong to \( S \); that is, \( A \in S^t \) if and only if \( A^t \in S \). Let \( \mathcal{N} = S \cup S^t \).

Let \( \mathcal{P} \) denote the set of matrices with at most one nonzero entry in each row and no column of all zeros. Let \( \mathcal{D} \) denote the set of square diagonal matrices with positive diagonal entries.

We say a reaction network is \textit{non-catalytic} if no species appears on both sides of any reaction. We say that a reaction network is weakly contractive if it is weakly contractive on each stoichiometric compatibility class. (Undefined terms are introduced in Section~\ref{sec:background}.)

\begin{fixedtheorem}
\label{thm:collection of theorems}
Let a non-catalytic reaction network have a stoichiometric matrix of the form \( PND \), where \( P \in \mathcal{P} \), \( N \in \mathcal{N} \), and \( D \in \mathcal{D} \), and where the matrix product \( PND \) is well-defined. If, in addition, the \( \mathcal{RI} \)-graph of the network is strongly connected, then the corresponding system is weakly contractive.
\end{fixedtheorem}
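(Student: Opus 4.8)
The plan is to deduce weak contractivity from monotonicity: I will use the factorization $PND$ together with non-catalyticity to exhibit the vector field as a monotone system (after a change of coordinates and, if necessary, the single-dummy-species augmentation alluded to above), and then invoke the monotonicity-to-contractivity connection established earlier in the paper to conclude that the matrix measure in an appropriate polyhedral norm is nonpositive on each stoichiometric compatibility class.

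First I would set up the dynamics $\dot{x} = \Gamma R(x)$ with $\Gamma = PND$ and compute the Jacobian $J(x) = PND\,R'(x)$. Since the network is non-catalytic, the $(j,i)$ entry of $R'(x)$ is nonnegative and is nonzero precisely when species $i$ is a reactant of reaction $j$; this pins down the sign pattern of $R'(x)$ as that of the transpose of the reactant matrix. This sign information is what couples the analytic dynamics to the combinatorial data encoded in $N$.

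Next I would read the factorization graph-theoretically. A matrix $N \in \mathcal{N}$ is, up to transposition, a signed incidence matrix: each column (or row) has at most two entries drawn from $\{-1,0,1\}$, so it is the incidence matrix of a graph on the intermediate nodes, with reactions playing the role of edges. The diagonal factor $D \in \mathcal{D}$ reweights these edges positively, and $P \in \mathcal{P}$ (one nonzero per row, no zero column) acts as an aggregation mapping intermediate nodes onto species. I would then build a signature matrix $\Sigma = \diag(\pm 1)$ from a consistent orientation of this graph and verify that $\Sigma J(x)\Sigma$ is Metzler, i.e.\ that the system is monotone with respect to the orthant order induced by $\Sigma$. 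This is where strong connectivity of the $\mathcal{RI}$-graph enters: it guarantees that the underlying sign structure is \emph{consistent} (no frustrated cycles) and that the resulting Metzler matrix is irreducible, upgrading monotonicity to the irreducible form required below.

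Finally, with monotonicity in hand, I would apply the monotonicity--contractivity lemma: for a monotone system of this type the matrix measure in a suitably weighted $\ell_1$ (respectively $\ell_\infty$) polyhedral norm reduces to a signed column (respectively row) sum of $J(x)$, and the incidence structure of $N$ is designed to make these sums nonpositive, so that $\mu(J(x)) \le 0$ for all $x$; restricting to a compatibility class then yields weak contractivity there. The main obstacle is the middle step: producing the correct order from the factorization when the aggregation $P$ is not invertible, since $P$ can merge several intermediate coordinates into a single species and thereby break a naive orthant order. I expect to resolve this exactly as flagged in the introduction---by passing to reaction coordinates, or equivalently adding a single dummy species to render the system species-coordinate monotone---and to exploit the restriction to a stoichiometric compatibility class to neutralize the kernel directions of $P$.
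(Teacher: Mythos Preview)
Your proposal has a genuine gap in the middle step. You assert that a signature matrix $\Sigma$ can be found so that $\Sigma J(x)\Sigma$ is Metzler, and that strong connectivity of the $\mathcal{RI}$-graph is what rules out ``frustrated cycles'' in the sign pattern. Neither claim is correct for this class of networks. Strong connectivity of the $\mathcal{RI}$-graph is a reachability condition on the reaction interaction graph; it says nothing about sign consistency, and there are networks satisfying all the hypotheses of the theorem whose Jacobian cannot be made Metzler by any diagonal sign change. The three-body binding network in the introduction is already such an example: its stoichiometric matrix has rows with two nonzero entries of the same sign and rows with two nonzero entries of opposite sign, and no choice of $\Sigma$ will make all off-diagonal entries of $\Sigma J\Sigma$ nonnegative. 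Consequently the $\ell_1$/$\ell_\infty$ matrix-measure computation you envision, which relies on cooperativity, does not go through.

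The paper's route is essentially different. It does use monotonicity, but with respect to \emph{non-orthant} polyhedral cones that are constructed case by case from the structure of $N$: cross-polytope or simplicial cones when $N\in S$ (type~C, Theorem~8 and Corollary~2), and cubical cones or subset-sum polytopes when $N\in S^t$ (Theorems~9--12). The key device is the notion of a \emph{viable set} of vectors (Definition~7) whose lift generates a cone for which the lifted system is strongly monotone (Theorems~4 and~5); strong connectivity of the $\mathcal{RI}$-graph enters only here, to upgrade monotonicity to strong monotonicity, not to establish a consistent orientation. Weak contractivity then follows from the general monotone-to-contractive mechanism of Theorem~2, which slices the lifted cone by the stoichiometric subspace to produce the unit ball of a polyhedral norm. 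The factor $P$ is handled uniformly by Theorem~7, so the non-invertibility of $P$ that you flag as the main obstacle is in fact the easy part; the substantive work is building the right cone for $N$.
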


We provide here an informal sketch of the proof strategy. It begins by using Theorem~\ref{thm:output_M_to_strong_monotone}, which provides an efficient way to check whether a given reaction network is monotone with respect to a polyhedral cone. As shown in Section~\ref{sec:connect_montone_contractive}, this also offers a method for verifying non-expansivity. The remainder of the proof proceeds by partitioning the class of reaction networks into several cases and, for each case, explicitly constructing a norm with respect to which the system is weakly contractive.

We focus on the matrix \(N\) in the factorization \(PND\), since Theorem~\ref{thm:P*gamma network strongly monotone} guarantees that if \(N\) is weakly contractive, then \(PN\) is as well. The matrix \(D\) does not affect the class of dynamical systems under consideration and can therefore be ignored.

The three structural cases for \(N\) are:
\begin{enumerate}
    \item Each column of \(N\) has at most two nonzero entries (Corollary~\ref{cor:type C is weakly contractive}),
    \item Each row has at most two nonzero entries and the columns are linearly independent (Theorem~\ref{thm:cubicals are weakly contractive}), and
    \item Each row has at most two nonzero entries and the columns are linearly dependent (Theorem~\ref{thm:dependent reaction monotone is contractive}).
\end{enumerate}

The latter two results are unified in Theorem~\ref{2 in each row aligned weakly contractive}.

This theorem covers all the explicit weakly contractive network classes considered in this paper. A key advantage is that for many networks, its applicability can be easily determined via inspection. This theorem applies to several classes of reaction networks studied in the literature, which we refer to as the Type~I, Type~A, and Type~S classes. Figure~\ref{fig:myimage} provides a graphical illustration of how these previously known results arise as special cases of our main theorem. The figure also highlights how certain new classes of reaction networks fall under the scope of our theorem—specifically, cases that, to the best of the authors' knowledge, have not previously had their asymptotic behavior analyzed (such as when \( N \) has at most two nonzero entries per column and linearly independent rows).

\subsection{Two examples} We now demonstrate that several realistic biological networks fall under the scope of our main theorem, more examples will be shown in \S \ref{sec:biochemical examples}. The following examples are adapted from \cite{10.1371/journal.pcbi.1007681}.

Consider a network modeling two species, $E_1$ and $E_2$, competing to bind with another species $X$:
\[
X + E_1 \rightleftharpoons XE_1, \quad X + E_2 \rightleftharpoons XE_2,
\]
with the associated stoichiometric matrix:
\[
\begin{bmatrix}
    -1 & -1 \\
    -1 & 0 \\
    0 & -1 \\
    1 & 0 \\
    0 & 1 \\
\end{bmatrix}
= 
\begin{bmatrix}
    1 & 0 & 0 \\
    0 & -1 & 0 \\
    0 & 0 & -1 \\
    0 & 1 & 0 \\
    0 & 0 & 1 \\
\end{bmatrix}
\begin{bmatrix}
    -1 & -1 \\
    1 & 0 \\
    0 & 1 \\
\end{bmatrix}
\begin{bmatrix}
    1 & 0 \\
    0 & 1 \\
\end{bmatrix}.
\]
This admits a factorization of the form $PND$ with $N \in S \cup S^t$, so our results apply and the network is weakly contractive. As we will see later, this leads to strong dynamical conclusions.

As a second example, consider \textit{three-body binding}, where two molecules, $X$ and $Y$, can bind only in the presence of a third molecule $E$, which facilitates the interaction. This scenario commonly arises in biological systems. A reaction network model capturing this behavior is:
\[
X + E \rightleftharpoons XE, \quad Y + E \rightleftharpoons YE,
\]
\[
YE + X \rightleftharpoons XEY, \quad Y + XE \rightleftharpoons XEY.
\]

The corresponding stoichiometric matrix and its factorization are:
\[
\begin{bmatrix}
    -1 & -1 &  0 &  0 \\
    -1 &  0 & -1 &  0 \\
     0 & -1 &  0 & -1 \\
     1 &  0 &  0 & -1 \\
     0 &  1 & -1 &  0 \\
     0 &  0 &  1 &  1 \\
\end{bmatrix}
=
\begin{bmatrix}
    1 & 0 & 0 & 0 & 0 & 0 \\
    0 & 1 & 0 & 0 & 0 & 0 \\
    0 & 0 & 1 & 0 & 0 & 0 \\
    0 & 0 & 0 & 1 & 0 & 0 \\
    0 & 0 & 0 & 0 & 1 & 0 \\
    0 & 0 & 0 & 0 & 0 & 1 \\
\end{bmatrix}
\begin{bmatrix}
    -1 & -1 &  0 &  0 \\
    -1 &  0 & -1 &  0 \\
     0 & -1 &  0 & -1 \\
     1 &  0 &  0 & -1 \\
     0 &  1 & -1 &  0 \\
     0 &  0 &  1 &  1 \\
\end{bmatrix}
\begin{bmatrix}
    1 & 0 & 0 & 0 \\
    0 & 1 & 0 & 0 \\
    0 & 0 & 1 & 0 \\
    0 & 0 & 0 & 1 \\
\end{bmatrix}.
\]

We write out the factorization explicitly in the form $PND$, though we could just as well have observed that the stoichiometric matrix contains only entries in $\{-1, 0, 1\}$ and has at most two nonzero entries in each row. Thus, our main theorem again applies, and the network is weakly contractive.

\begin{figure}[ht]
    \centering
    \includegraphics[width=0.8\textwidth]{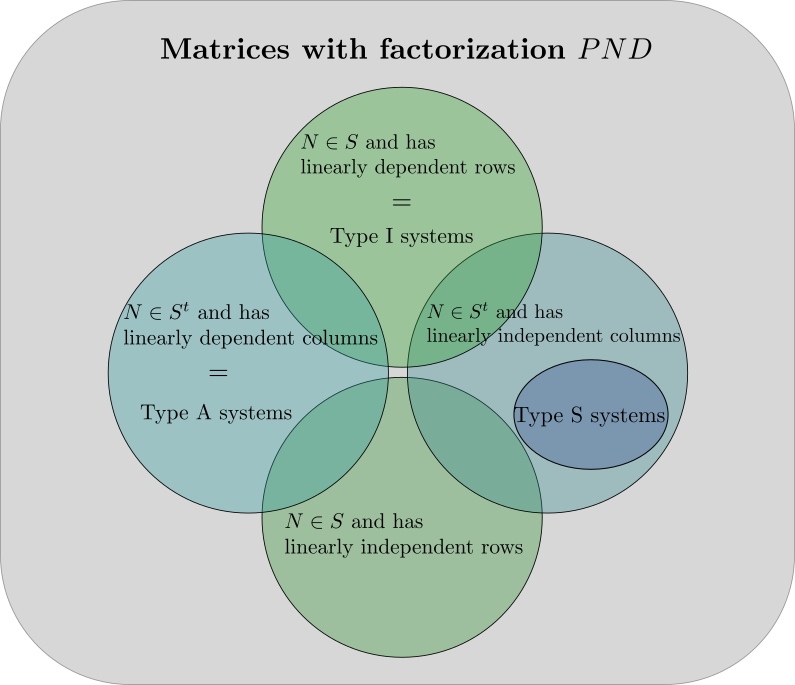}
    \caption{Venn diagram describing various relationships between classes of weakly contractive systems. Type I networks are from \cite{doi:10.1137/120898486} and are discussed in Section \ref{sec:type_I_networks}, Type S networks are from \cite{BANAJI20131359} and are discussed in Section \ref{sec:cubical_cones}, and Type A networks are from \cite{Angeli2010-hj} and are discussed in Section \ref{sec:react_coordinates}. 
    }
    \label{fig:myimage}
\end{figure}

\subsection{Paper structure}
Our work is organized as follows. In Section \ref{sec:background} we establish our background and notation, including a connection between monotonicity and contractivity, allowing us to go between the two properties. In Section \ref{sec:Conditions for monotone} we establish some preliminary lemmas to allow us to establish the monotonicity of a system. In Section \ref{sec:algo for cones} we introduce viable sets and prove additional sufficient conditions for strong monotonicity. Sections \ref{sec:cross-polytopes cones} and \ref{sec:additional weakly contractive systems} use our methods to explicitly show certain classes of reaction networks are weakly contractive with respect to certain norms. Finally, Section \ref{sec:proof of theorem 1} brings multiple parts together to establish Theorem \ref{thm:collection of theorems}. Section \ref{sec:biochemical examples} provides biochemical examples where our theorems apply.

\section{Background and Notation}
\label{sec:background}

We study reaction networks (defined below) and their associated differential equations. Fix a positive integer $n$ throughout the paper.

For a vector $x \in \mathbb{R}^n$ we indicate the $i$'th coordinate as $(x)_i$, and if $I$ contains multiple indices then $(x)_I$ is the ordered set of values with these indices. As an example, if we take $I = \{1,2,4\}$ and $x = [2,3,4,5,6]$, then we have $(x)_I = [2,3,5]$. The support of a vector $x$ is the set of indices $I$ such that if $i \in I$ then $(x)_i \neq 0$. We indicate the usual Euclidean inner product by $\langle x,y \rangle$. By ``cone," we mean a convex cone (closed under nonnegative scaling and addition). Given a cone $K \subseteq \mathbb{R}^n$ its dual cone is denoted by $K^* \subseteq \mathbb{R}^n$ and defined as $K^* = \{x \in \mathbb{R}^n| \langle x, y\rangle \geq 0 \ \forall \ y \in K \}$. The Jacobian of $f$ is denoted by $\mathcal{J}_f$. The boundary and interior of a set $B$ are denoted by $\partial B$ and $\mbox{Int}(B)$, respectively. Let $\mathbb{R}_{\geq 0}$ be the set of nonnegative real numbers. A dynamical system defined by $\dot{x} = f(x)$ is sometimes referred to simply as a system. The state space of a system is simply a forward invariant set for the system.

 For standard terminology related to polytopes and cones—such as extreme rays and faces of a cone—we refer the reader to Section~\ref{section_appendix_polytope_terms}. At times, we abuse notation by referring to an extreme ray using a single nonzero vector lying on the ray; the intended meaning should be clear from context. For a subspace $L$ we indicate the orthogonal subspace by $L^{\perp}$. In general, for a vector $v$ we indicate by $v^* = v^{\perp} \cap K^*$ the vector space orthogonal to $v$ intersected with the cone $K^*$ ($K^*$ is the dual of a cone $K$, where the definition of $K$ should be clear from context). For a face of a cone $F$ we define $F^* = (\spans \ F)^{\perp} \cap K^*$. In particular, if we have an extreme ray $v = k_1 \in K$ then $k_1^*$ is a face of $K^*$ called the dual face. The affine hull of a set $M \subseteq \mathbb{R}^n$ is $\mbox{AffHul}(M)= \left\{ \sum_{i=1}^k \alpha_i x_i | x_i \in M, k > 0, \alpha_i \in \mathbb{R}, \sum_{i=1}^k \alpha_i = 1 \right\}$. By the dimension of a set, we mean the dimension of the affine hull of the set. In general, when we consider the interior or boundary of a set $M$, we consider these operations with respect to the subspace topology of $\mbox{AffHul}(M)$. We sometimes use the phrase `relative interior' or `relative boundary' when using the interior or boundary operations in this sense. A cone \( K \) is said to be \textit{pointed} if \( K \cap (-K) = \{0\} \), and \textit{proper} if it contains a nonempty open set. A directed graph is \textit{strongly connected} if, for every pair of nodes, there exists a directed path from one to the other. The term \textit{poset} refers to a partially ordered set, that is, a set equipped with a binary relation \( \leq \) that is reflexive, antisymmetric, and transitive.

\begin{define}
    A \textit{reaction network} consists of \( m \) reactions, each labeled as either reversible or irreversible. Each reaction is represented by a column vector in \( \mathbb{R}^n \), called a \textit{reaction vector}, which describes the net change in species concentrations caused by that reaction.

\end{define}

These vectors form the columns of the stoichiometric matrix \( \Gamma \). The reversible/irreversible labels are not encoded in \( \Gamma \), and their implications are discussed later in the paper.

The reaction vectors can be collected into a matrix \( \Gamma \), called the \textit{stoichiometric matrix}, where each reaction vector forms a column of \( \Gamma \). Since the ordering of reactions does not affect our results, it is chosen arbitrarily.

\begin{define}
    The \textit{stoichiometric compatibility class} of an initial state \( S_0 \in \mathbb{R}^n_{\geq 0} \) is the set of all states reachable via the reaction network, given by:
    \[
    \{S_0 + \Gamma x \mid x \in \mathbb{R}^m\} \cap \mathbb{R}^n_{\geq 0}.
    \]
\end{define}

We will sometimes refer to the reaction network as the matrix $\Gamma$.
\begin{define}
    The \textit{species} of a reaction network correspond to the coordinates of the space $\mathbb{R}^n$ where each reaction vector resides.
    
\end{define}

\begin{define}
    Two reaction vectors $v_1$ and $v_2$ are said to \textit{share species $i$} if both $(v_1)_i$ and $(v_2)_i$ are nonzero. More generally, they \textit{share species $I$} if this holds for every $i \in I$. If there exists at least one such $i$, we simply say the reactions \textit{share species}.
    
\end{define}
\begin{define}
    The \textit{reaction graph} (or \textit{R-graph}) is a graph where nodes correspond to reaction vectors in the network. An edge is drawn between two nodes corresponding to reactions $R_i$ and $R_j$ if and only if they share species. An edge is positive if all shared species have the same sign and negative if all shared species have opposite signs. Otherwise, the edge has no sign.
\end{define}

\begin{define}
    The \textit{$\mathcal{RI}$-graph} is a directed graph where nodes represent reactions in a reaction network. A directed edge from $\Gamma_i$ to $\Gamma_j$ is drawn if either:
    \begin{itemize}
        \item $\Gamma_j$ is irreversible and at least one species of $\Gamma_i$ is a reactant of $\Gamma_j$, or
        \item $\Gamma_j$ is reversible and the two reactions share at least one species.
    \end{itemize}
\end{define}

 In the context of R-graphs, we use ``reaction" and ``node" interchangeably. We assume that all R-graphs are connected. If an R-graph is disconnected, species in different components evolve independently. In other words, if species \( i \) and \( j \) belong to different components, then

\[
\left( \frac{\partial\, (\Gamma R)}{\partial x_j} \right)_i = 0 \quad \text{and} \quad \left( \frac{\partial\, (\Gamma R)}{\partial x_i} \right)_j = 0.
\]

Our results apply separately to each component—for example, if each component has a decreasing norm, the maximum norm across all components also decreases.

 In \cite{Angeli2010-hj}, edges are defined with opposite sign conventions: a positive edge links reactions sharing a species with opposite signs, while a negative edge links reactions sharing a species with the same sign. We deviate from this by reversing the signs and disallowing multiple edges between reactions.

\subsection{Reaction Network Kinetics}

A reaction network induces a system of ordinary differential equations: 
\begin{equation}\label{eq_main}
    \dot{x} = \Gamma R(x),
\end{equation}
where \( x \in \mathbb{R}^n_{\geq 0} \) represents species concentrations, and \( R(x) \) is a column vector of reaction rates. The \( i \)th entry \( R_i(x) \) is a function \( R_i: \mathbb{R}^n_{\geq 0} \to \mathbb{R} \) describing the rate of the \( i \)th reaction, corresponding to the \( i \)th column \( \Gamma_i \) of \( \Gamma \). 

We classify species as \textit{reactants} if they have negative entries in \( \Gamma_i \) and \textit{products} if they have positive entries. A set of species \( I \) is \textit{present} if \( (x)_I > 0 \) entrywise.

Each reversible reaction rate function \( R_i(x) \) satisfies:

\begin{enumerate}
    \item \( R_i(x) \) is continuously differentiable (\( C^1 \)).
    \item Reaction rates obey the conditions:
    \[
    R_i(x) 
    \begin{cases}
        \geq 0, & \text{if not all products are present,} \\
        \leq 0, & \text{if not all reactants are present.}
    \end{cases}
    \]
    \item Partial derivatives satisfy:
    \[
    \frac{\partial R_i(x)}{\partial x_j} 
    \begin{cases}
        \geq 0, & \text{if \( x_j \) is a reactant,} \\
        \leq 0, & \text{if \( x_j \) is a product,} \\
        = 0,  & \text{otherwise.}
    \end{cases}
    \]
    \item Strict partial derivatives hold when all relevant species are present:
    \[
    \frac{\partial R_i(x)}{\partial x_j} 
    \begin{cases}
        > 0, & \text{if \( x_j \) is a reactant and all reactants are present,} \\
        < 0, & \text{if \( x_j \) is a product and all products are present.}
    \end{cases}
    \]
\end{enumerate}

For irreversible reactions, we impose:
\begin{enumerate}
    \item \( R_i \) is \( C^1 \).
    \item \( R_i = 0 \) if any reactant concentration \( x_i = 0 \).
    \item \( \frac{\partial R_i}{\partial x_j} \geq 0 \), with \( = 0 \) if \( x_j \) is not a reactant.
    \item \( \frac{\partial R_i}{\partial x_j} > 0 \) if \( x_j \) is a reactant and all reactants are present.
\end{enumerate}

These kinetic assumptions are satisfied by several commonly used kinetics, such as mass-action, Michaelis–Menten, and Hill kinetics. For additional explanations, examples, and background on reaction networks, see \cite{ANGELI2008128}.

These conditions define the class of differential equations we consider. Throughout, we assume that all systems of the form \( \dot{x} = f(x) \) satisfy \( f \in C^1 \) on \( \mathbb{R}^n_{\geq 0} \). By \( C^1 \) on a closed set, we mean that \( f \) admits a \( C^1 \) extension to an open neighborhood of that set.

Since the vector field \( f \) is \( C^1 \), it follows that \( f \) is locally Lipschitz. Therefore, solutions exist, are unique, and the flow \( \phi_t \) depends continuously on initial conditions (see, for example, \cite[Theorem~54]{sontag2013mathematical} and \cite[Theorem~1]{sontag2013mathematical}).

We assume that for any system of ordinary differential equations \( \dot{x} = f(x) \), the evolution operator \( \phi_t \) exists and is well defined for all \( t \geq 0 \).

We assume that every species participates in at least one reaction—either as part of a reversible reaction or as a reactant in an irreversible one. This ensures that no species is completely disconnected from the dynamics of the reaction network. Later, we introduce dummy species, but we assume that the initial reaction network always satisfies this condition.

\subsection{Monotonicity and Contractivity}

A dynamical system is \textit{monotone} if its evolution preserves a partial ordering on its state space. More formally, consider a system with invariant state space $\mathbb{X} \subseteq \mathbb{R}^n$ (i.e., $\mathbb{X}$ is forward-invariant) and evolution operator $\phi_t:\mathbb{X} \to \mathbb{X}$, defined for $t \geq 0$. Given a partial order $\leq$ on $\mathbb{X}$, the system is \textit{monotone} if for all $x, y \in \mathbb{X}$,

\[
x \leq y \quad \Rightarrow \quad \phi_t(x) \leq \phi_t(y) \quad \forall t \geq 0.
\]

We focus on partial orders induced by cones. Suppose $\mathbb{A} \subseteq \mathbb{X}$ is an affine subspace and $K$ is a pointed cone such that there exists $j \in \mathbb{X}$ with $j + K \subseteq \mathbb{A}$. Then we define a partial order on $\mathbb{A}$ by

\[
x \leq y \quad \iff \quad y - x \in K.
\]

A system $\dot{x} = f(x)$ with invariant affine space $\mathbb{A}$ is \textit{strongly monotone} with respect to $K$ if, for any two trajectories $x(t)$ and $y(t)$, there exists $T > 0$ such that

\[
y(0) - x(0) \in \partial K \quad \Rightarrow \quad y(t) - x(t) \in \operatorname{Int}(K) \quad \forall t \in (0, T).
\]

That is, trajectories initially on the boundary of $K$ move into its relative interior. Lastly, we say a reaction network is monotone if the system $\dot{x} = \Gamma R(x)$ is monotone for all functions $R(x)$ satisfying the kinetic assumptions.

\begin{define}
A system $\dot{x} = f(x)$ is \textit{weakly contractive} with respect to a norm $\| \cdot \|$ if for any two trajectories $x(t)$ and $y(t)$ with $x(0) \neq y(0)$, we have

\[
\|x(t_2) - y(t_2) \| < \|x(t_1) - y(t_1) \| \quad \forall t_2 > t_1.
\]

The system is \textit{non-expansive} with respect to the norm if

\[
\|x(t_2) - y(t_2) \| \leq \|x(t_1) - y(t_1) \| \quad \forall t_2 > t_1.
\]
\end{define}

We say that a reaction network is weakly contractive when, for every admissible choice of kinetics, the dynamics on the relative interior of each stoichiometric compatibility class are weakly contractive with respect to some norm.

\subsection{Connecting Monotonicity and Contractivity}
\label{sec:connect_montone_contractive}

We now establish a connection between strong monotonicity and weak contractivity. To this end, we introduce the lift of a system. Then, using a theorem we will prove, we can deduce weak contractivity of the original system.

\begin{define}
Given a system \( \dot{x} = f(x) \) in \( \mathbb{R}^n \), the \textit{lifted system} (or lift) is the system \( \dot{z} = g(z) \) in \( \mathbb{R}^{n+1} \), where
\[
z = \begin{bmatrix} x \\ x_{n+1} \end{bmatrix}, \quad g(z) = \begin{bmatrix} f(x) \\ 0 \end{bmatrix}.
\]
\end{define}

\begin{define}
The \textit{lift} of a vector \( v \in \mathbb{R}^n \) is the vector \( v' \in \mathbb{R}^{n+1} \) obtained by appending a 1:
\[
v' = \begin{bmatrix} v \\ 1 \end{bmatrix}.
\]
The \textit{lift} of a set of vectors \( \{v_i\} \subset \mathbb{R}^n \) is the cone generated by their lifts \( \{v_i'\} \subset \mathbb{R}^{n+1} \).
\end{define}

\begin{define}
A set \( \beta \subseteq \mathbb{R}^n \) is:
\begin{itemize}
    \item \textit{Absorbing} if for every \( x \), there exists \( r > 0 \) such that \( x \in r \beta \).
    \item \textit{Symmetric} if \( x \in \beta \Rightarrow -x \in \beta \).
    \item \textit{Good} if it is symmetric, compact, convex, and absorbing.
\end{itemize}
\end{define}

\begin{lem}[Theorem 1.35, \cite{rudin1973functional}]
\label{lem:good set to norm}
   If $\beta$ is a good set, then the function
   \[
   \|x\| = \inf\{t>0 \mid x \in t \beta \}
   \]
   defines a norm on $\mathbb{R}^n$.
\end{lem}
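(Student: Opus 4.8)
The plan is to recognize $\|\cdot\|$ as the Minkowski functional (gauge) of the good set $\beta$ and to verify the three norm axioms one at a time, tracking which hypothesis on $\beta$ powers each one. First I would check that $\|x\|$ is well-defined and finite: since $\beta$ is absorbing, the set $\{t > 0 \mid x \in t\beta\}$ is nonempty for every $x$, so the infimum exists and lies in $[0,\infty)$. I would also record that $0 \in \beta$, which follows from symmetry together with convexity (if $w \in \beta$ then $-w \in \beta$, and their midpoint $0$ lies in $\beta$); this makes $\|0\| = 0$ immediate.

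For absolute homogeneity, for $\alpha > 0$ I would substitute $s = t/\alpha$ in the identity $\{t > 0 \mid \alpha x \in t\beta\} = \{t > 0 \mid x \in (t/\alpha)\beta\}$, which gives $\|\alpha x\| = \alpha \|x\|$. Symmetry of $\beta$ forces each $t\beta$ to be symmetric, so $x \in t\beta \iff -x \in t\beta$, hence $\|-x\| = \|x\|$. Combining these two observations yields $\|\alpha x\| = |\alpha|\,\|x\|$ for every real $\alpha$.

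For the triangle inequality I would exploit convexity. Given $\epsilon > 0$, I choose $s < \|x\| + \epsilon$ and $u < \|y\| + \epsilon$ with $x/s \in \beta$ and $y/u \in \beta$. Writing $\tfrac{x+y}{s+u}$ as the convex combination $\tfrac{s}{s+u}\cdot\tfrac{x}{s} + \tfrac{u}{s+u}\cdot\tfrac{y}{u}$ of two points of $\beta$, convexity places it in $\beta$, so $x + y \in (s+u)\beta$ and therefore $\|x+y\| \le s + u < \|x\| + \|y\| + 2\epsilon$; letting $\epsilon \to 0$ finishes this axiom.

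The step I expect to be the crux is positive definiteness, and this is precisely where compactness is needed. Since $\beta$ is compact it is bounded, say $\beta \subseteq \{w \mid \|w\|_2 \le M\}$ in the Euclidean norm. If some $x \neq 0$ had $\|x\| = 0$, then $x \in t\beta$ for arbitrarily small $t > 0$, which forces $\|x\|_2/t = \|x/t\|_2 \le M$, i.e.\ $t \ge \|x\|_2/M > 0$ for all such $t$, contradicting that the infimum is $0$. Hence $\|x\| = 0$ only when $x = 0$, and the three axioms together show $\|\cdot\|$ is a norm. I note that closedness of $\beta$ is not strictly required for the axioms themselves, although it guarantees the defining infimum is attained and that $\beta$ coincides with the closed unit ball of the resulting norm.
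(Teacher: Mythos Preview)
Your proof is correct and follows the standard Minkowski functional argument. Note, however, that the paper does not supply its own proof of this lemma at all: it is stated with a citation to Rudin's \emph{Functional Analysis}, Theorem~1.35, and no proof environment follows. So there is nothing in the paper to compare against beyond the reference, and your argument is essentially the one Rudin gives (gauge of a balanced convex absorbing set is a seminorm, upgraded to a norm by boundedness).
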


\begin{lem}
\label{lem:transversal cones bounded section}
Let \( V \subset \mathbb{R}^n \) be a linear subspace and $K$ be a closed cone such that \( K \cap V = \{0\} \). If \( x \in \mathbb{R}^n \) is such that \( K \cap (x + V) \neq \emptyset \), then the intersection \( K \cap (x + V) \) is bounded.
\end{lem}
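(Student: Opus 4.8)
The plan is to argue by contradiction using a normalization and compactness argument. Suppose the slice $K \cap (x+V)$ is unbounded; I will extract from it a nonzero direction that lies in both $K$ and $V$, which contradicts the transversality hypothesis $K \cap V = \{0\}$.

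First I would choose a sequence $y_k \in K \cap (x+V)$ with $\|y_k\| \to \infty$; such a sequence exists precisely because the set is unbounded, and the set is nonempty by hypothesis. Writing $y_k = x + v_k$ with $v_k \in V$, I normalize by setting $u_k = y_k/\|y_k\|$. Since each $u_k$ lies on the unit sphere, which is compact, I pass to a subsequence converging to some $u$ with $\|u\| = 1$.

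Next I would identify the limit $u$. Because $K$ is a cone (closed under nonnegative scaling), $u_k = y_k/\|y_k\| \in K$, and since $K$ is closed, the limit satisfies $u \in K$. For the second membership I write $u_k = x/\|y_k\| + v_k/\|y_k\|$; the first term tends to $0$ as $\|y_k\| \to \infty$, so $v_k/\|y_k\| \to u$. As each $v_k/\|y_k\|$ lies in the linear subspace $V$, and $V$ is closed, the limit satisfies $u \in V$. Hence $u \in K \cap V = \{0\}$, contradicting $\|u\| = 1$, and the contradiction completes the proof.

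The argument is elementary, and I expect no serious obstacle; the only points requiring care are that $K$ is closed and invariant under positive scaling, so that the normalized vectors remain in $K$ and their limit stays in $K$, together with the routine fact that a linear subspace is automatically closed. As an alternative phrasing, one could invoke recession cones: the set $K \cap (x+V)$ is a closed convex set whose recession cone is contained in the intersection of the recession cone of $K$, which equals $K$ itself since $K$ is a closed convex cone, and the recession cone of the affine set $x+V$, which equals $V$. Transversality forces this recession cone to be $\{0\}$, and a closed convex set with trivial recession cone is bounded.
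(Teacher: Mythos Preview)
Your argument is correct and follows essentially the same approach as the paper: both proceed by contradiction, extract from an unbounded sequence in $K\cap(x+V)$ a limiting nonzero direction, and show that direction lies in $K\cap V$. Your normalization $y_k/\|y_k\|$ is the standard and slightly cleaner execution; the paper instead fixes a basepoint $k\in S$, intersects segments with spheres of radius $r$ around $k$, and passes to a limit there, but the underlying idea is identical.
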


\begin{proof}
Let \( S = K \cap (x + V) \). By assumption, \( S \) is a nonempty convex subset of \( x + V \), which is a closed affine subspace of \( \mathbb{R}^n \). Suppose $k \in S$. We note that since $k \in x + V$ we must in fact have $x + V = k + V$.

Suppose for contradiction that \( S \) is unbounded.  Since $S$ is unbounded,  there exists a sequence \( \{v_i\} \subset S \) such that \( \|v_i - k\| \to \infty \), i.e., \( v_i \notin k + B_i \), where \( B_i \) is the closed Euclidean ball of radius \( i \) centered at the origin.

Fix any \( r > 0 \). Since the segment between \( k \) and \( v_i \) lies entirely in \( S \) and \( \|v_i - k\| \to \infty \), for each \( i \) sufficiently large there exists \( \lambda_i \in (0,1) \) such that:
\[
w_i := \lambda_i v_i + (1 - \lambda_i)k \in \partial B_r + k.
\]
Hence, \( w_i \in S \cap (\partial B_r + x) \).

The set \( \partial B_r + k \) is compact in \( \mathbb{R}^n \), so the sequence \( \{w_i\} \) has a convergent subsequence. Let \( \bar{v} \in \partial B_r + k \) be a limit point. Since \( S \) is the intersection of two closed sets and thus is closed, and all \( w_i \in S \subset k + V \), it follows that \( \bar{v} \in S = K \cap (k + V) \).

As both \( \bar{v} \) and \( k \) lie in \( k + V \), we have \( \bar{v} - k \in V \). Moreover, since the sequence \( \{v_i\} \subset S \) is unbounded, we may replace \( r \) with any positive real number \( \alpha r \) and repeat the construction above. This shows that for all \( \alpha > 0 \), the point
\[
\alpha(\bar{v} - k) + k \in K.
\]
Scaling by $1/\alpha$ we have that \( \bar{v} - k + \frac{k}{\alpha} \in K \) for all $\alpha > 0$. Taking the limit as \( \alpha \to \infty \) and using closedness of \( K \), it follows that \( \bar{v} - k \in K \). Thus \( \bar{v} - k \in K \cap V \), contradicting the assumption that \( K \cap V = \{0\} \).

Therefore, our assumption that \( S \) is unbounded must be false. It follows that \( K \cap (x + V) \) is bounded.
\end{proof}

\setcounter{thm}{1}

\begin{thm}
\label{thm:strongly_monotone_lift_to_weakly contractive_general_2}
Suppose we have a system $\dot{x} = f(x)$ and a linear subspace $S$. Assume the system is forward-invariant on every translation of $S$,
\[
S_a = \mathbb{R}^n_{\geq 0} \cap \{a + S\}.
\]
If the lift of the system is strongly monotone with respect to a cone \( K \) satisfying \( K \cap S = \{0 \} \) and \( \operatorname{span}(K) \supseteq S \) (where $S$ is considered a subset of the lifted space), then the original system is weakly contractive with respect to a norm defined on the relative interior of each \( S_a \).
\end{thm}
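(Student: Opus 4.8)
The plan is to manufacture, from the monotonicity cone $K$, a norm on the difference space $S$, and then read weak contractivity directly off the strong monotonicity of the lift. Fix one class $S_a$ and two trajectories $x(t),y(t)$ of $\dot x=f(x)$ lying in the relative interior of $S_a$. By the assumed forward invariance their difference $d(t):=y(t)-x(t)$ stays in $S$, and after identifying $S$ with its image $\{[v;0]:v\in S\}$ in the lifted space this is exactly the subspace in the hypotheses. So everything reduces to building a norm on $S$ along which $d(t)$ strictly shrinks.

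First I would construct the norm. Let $\tilde C:=\{v\in S:[v;1]\in K\}$ be the slice of $K$ by the affine plane $e_{n+1}+S$, re-expressed in $S$-coordinates, and set $\beta:=\tilde C\cap(-\tilde C)$. I would verify that $\beta$ is a good set, so that Lemma~\ref{lem:good set to norm} yields a norm $\|v\|=\inf\{t>0:v\in t\beta\}=\inf\{t>0:[v;t]\in K\text{ and }[-v;t]\in K\}$. Symmetry and convexity of $\beta$ are immediate. Compactness is where Lemma~\ref{lem:transversal cones bounded section} enters: $K\cap(e_{n+1}+S)$ is the intersection of the closed cone $K$ with a translate of $S$, and since $K\cap S=\{0\}$ that lemma gives boundedness, while closedness is inherited from $K$.

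The main obstacle is showing that $\beta$ is absorbing, equivalently that $0$ lies in the relative interior of $\tilde C$, equivalently that $e_{n+1}\in\operatorname{Int}(K)$; this is exactly the point where the remaining hypotheses must be spent. Strong monotonicity forbids $e_{n+1}\in\partial K$: comparing the two lifted trajectories $[x(t);0]$ and $[x(t);c]$ (legitimate since the last coordinate is frozen) gives the \emph{constant} difference $c\,e_{n+1}$, which cannot migrate from $\partial K$ into $\operatorname{Int}(K)$, contradicting the definition unless $c\,e_{n+1}\notin\partial K$. The condition $\operatorname{span}(K)\supseteq S$ then forces the slice $\tilde C$ to be full-dimensional in $S$, which is what rules out $e_{n+1}$ lying outside $K$ (that case would make $\|\cdot\|$ infinite in some direction). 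Pinning this exterior case down rigorously, together with the relative-interior topology of the $S_a$, is the part I expect to demand the most care. Once $0\in\operatorname{Int}(\tilde C)$ is established, the scaled slices $t\tilde C$ are nested, a fact the contraction step relies on.

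Finally I would pass from strong monotonicity to strict decay. Fix $t_0$ and set $r:=\|d(t_0)\|$, so $d(t_0)\in\partial(r\beta)$; then at least one of $[d(t_0);r],[-d(t_0);r]$ lies on $\partial K$, since otherwise a slightly smaller $r$ would still work (using $e_{n+1}\in\operatorname{Int}(K)$), contradicting minimality. Say $[d(t_0);r]\in\partial K$. Applying strong monotonicity to $[x(t);0]$ and $[y(t);r]$, whose difference is $[d(t);r]$, gives $[d(t);r]\in\operatorname{Int}(K)$ for $t\in(t_0,t_0+T)$, hence $[d(t);r']\in K$ for some $r'<r$; the reflected comparison (monotonicity on the other sign, or strong monotonicity again if that side was also on $\partial K$) gives $[-d(t);r'']\in K$ for some $r''<r$. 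Nestedness of the slices then yields $d(t)\in\max(r',r'')\,\beta$, i.e. $\|d(t)\|<r$ on $(t_0,t_0+T)$. Since monotonicity alone makes $\|d(t)\|$ nonincreasing, the semigroup property upgrades this to $\|d(t_2)\|<\|d(t_1)\|$ for all $t_2>t_1$, which is weak contractivity on $\operatorname{relint}(S_a)$. I would close by observing that the strict partial-derivative kinetic hypotheses, valid precisely when all species are present, are what render the lift strongly (not merely) monotone there, justifying the restriction to the relative interior.
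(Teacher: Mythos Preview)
Your route diverges from the paper's in a way that introduces a genuine gap. Your norm is built from $\beta=\tilde C\cap(-\tilde C)$ where $\tilde C=\{v\in S:[v;1]\in K\}$, and as you correctly identify, this needs $e_{n+1}\in\operatorname{Int}(K)$. Your strong-monotonicity argument (constant difference $c\,e_{n+1}$ between $[x;0]$ and $[x;c]$) legitimately excludes $e_{n+1}\in\partial K$, but your claim that $\operatorname{span}(K)\supseteq S$ then excludes $e_{n+1}\notin K$ is false: the slice $\tilde C$ can be full-dimensional in $S$ without containing the origin. Concretely, the simplicial case produced later in the paper (Theorem~\ref{thm:type C strongly monotone} with linearly dependent rows) gives $K=\operatorname{cone}\{[e_i;1]:1\le i\le n\}$; then $\tilde C$ is the standard simplex $\{v\ge 0,\ \sum v_i=1\}$, which misses $0$, so $\beta=\tilde C\cap(-\tilde C)=\emptyset$ and your norm is undefined. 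Theorem~\ref{thm:output_M_to_strong_monotone} certifies that the lift \emph{is} strongly monotone with respect to this $K$, so every hypothesis of the present theorem is met while $e_{n+1}\notin K$. The ``exterior case'' you flagged as needing care is therefore not a technicality to be pinned down---it simply does not go your way.

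The paper avoids this by slicing at a general $b\in K$ chosen (via Lemma~\ref{lem:transversal cones bounded section} and $\operatorname{span}(K)\supseteq S$) so that $P=(b+S)\cap K$ is full-dimensional and bounded, and then taking the \emph{Minkowski difference} $H=P-P$ rather than an intersection as the unit ball; this contains $0$ in its interior automatically. The cost is that $\|y-x\|=\epsilon$ no longer reads as $[y-x;\epsilon]\in\partial K$, so one cannot compare $[x;0]$ with $[y;r]$ directly. Instead the paper writes $y-x=\epsilon(p_1-p_2)$ with $p_1,p_2\in P$, introduces a third point $z=x-\epsilon p_2=y-\epsilon p_1$ lying below both (in the lift, where one checks $z$ stays in the positive orthant for small $\epsilon$), and applies strong monotonicity to the pairs $(z,x)$ and $(z,y)$ separately. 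This yields only local contraction; a segment-covering compactness argument then globalizes. Your two-trajectory comparison followed by the nonincreasing/semigroup upgrade is tidier when it applies, so if you want to retain your endgame you would need to replace $\beta$ by something like $H=P-P$ and rework the link between $\|d\|=r$ and membership in $\partial K$ accordingly.
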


\begin{proof}
Consider the lifted system. By the assumption \( \operatorname{span}(K) \supseteq S \), and using Lemma~\ref{lem:transversal cones bounded section}, there exists a vector \( b \in K \) such that the intersection \( P = (b + S) \cap K \) is nonempty, bounded, and satisfies \( \operatorname{AffHul}(P) = b + S \).

Define \( H = P - P \). Let \( B = \operatorname{conv}\{P, 0\} \), and set \( B_p = B - P \). Since both \( B \) and \( P \) are compact, the set \( B_p \) is compact as well.

Let \( x \in \mbox{Int}(S_c) \), where \( S_c \) is an arbitrary translation of \( S \). Choose \( \epsilon > 0 \) small enough so that \( x + \epsilon B_p \subseteq \operatorname{Int}(\mathbb{R}^n_{\geq 0}) \). For any \( y \in x + \epsilon H \), write \( y - x = \epsilon(p_1 - p_2) \) for some \( p_1, p_2 \in P \), so \( x = z + \epsilon p_2 \), \( y = z + \epsilon p_1 \), where \( z = x - \epsilon p_2 \). Since \( P \subset K \), both \( x \) and \( y \) lie in the cone \( z + K \), and in particular,
\[
\{x, y\} \subset (z + K) \cap S_c.
\]

Observe that \(z \in  x + \epsilon B_p \), which is contained in \( \operatorname{Int}(\mathbb{R}^n_{\geq 0})\) by construction. Since \(x-z=\varepsilon p_2\in K\) and \(y-z=\varepsilon p_1\in K\), monotonicity of the flow gives
\[
  \phi_t(z)\le\phi_t(x)\quad\text{and}\quad
  \phi_t(z)\le\phi_t(y)\qquad(t>0).
\]

Because \( S_c \) is forward-invariant, it follows that
\[
(\phi_t(z) + K) \cap S_c = \phi_t(z) - z + ((z + K) \cap S_c) \supseteq \phi_t(z) - z + \{\phi_t(x), \phi_t(y)\}.
\]
Therefore, by strong monotonicity, 
\[
\{\phi_t(x), \phi_t(y)\} \subset \operatorname{Int}(\phi_t(z) - z + (x + \epsilon(P - p_2))),
\]
and since this is a translated copy of a scaled version of \( H \), we conclude that
\[
\phi_t(y) \in \operatorname{Int}(\phi_t(x) + \epsilon' H)
\]
for some \( \epsilon' < \epsilon \). Taking \( H \) as the unit ball of a norm, this implies local weak contractivity in a neighborhood of \( x \).

To extend this to all of \( \operatorname{Int}(S_b) \), take any \( x, y \in \operatorname{Int}(S_b) \). The segment from \( x \) to \( y \) is compact, and can be covered by finitely many open sets where local weak contractivity holds. Selecting partition points along the segment within these sets, we obtain a finite sequence
\[
x = x_0, x_1, \dots, x_n = y
\]
such that each pair \( (x_{i-1}, x_i) \) lies within a local weak contractivity neighborhood. For each \( i \), there exists \( t > 0 \) such that
\[
\|\phi_t(x_i) - \phi_t(x_{i-1})\| < \|x_i - x_{i-1}\|.
\]
Summing over \( i \) and using the triangle inequality, we conclude:
\[
\|\phi_t(y) - \phi_t(x)\| \leq \sum_{i=1}^n \|\phi_t(x_i) - \phi_t(x_{i-1})\| < \sum_{i=1}^n \|x_i - x_{i-1}\| = \|y - x\|.
\]
Thus, the system is weakly contractive on \( \operatorname{Int}(S_b) \), as claimed.  
\end{proof}

The key idea of this proof is that we can choose $a$ such that $K \cap \operatorname{AffHul}(S_a) = P$ is a nonempty set, allowing $P - P$ to serve as the unit ball of a norm under which the system $\dot{x} = f(x)$ is weakly contractive on each $S_a$. 

This result is closely related to the construction of a Finsler structure in \cite{Mierczyński1991}, where the author shows that, under mild technical conditions, a cooperative system with a first integral admits a Finsler metric that renders it non-expansive. While our approach uses similar methods, the setting differs: the systems we consider may be monotone with respect to a cone other than the positive orthant, and it is invariant on affine spaces rather than level sets of a function.

\section{Conditions for Monotonicity and Strong Monotonicity}

\subsection{Preliminaries}

\label{sec:Conditions for monotone}
To determine whether a network is monotone we will rely on Proposition 1.5 in \cite{WALCHER2001543}. Let $\mathcal{J}_f(x)$ be the Jacobian of a $C^1$ function $f(x)$ evaluated at a point $x$. We have

\begin{thm}
\label{thm:monotone}
    \cite{WALCHER2001543} A system $\dot{x} = f(x)$ with an open and forward invariant state space $\mathbb{X}$ is monotone with respect to a proper, pointed and convex cone $K$ iff for all $x \in \mathbb{X}$ and for all $k_1\in \partial K$ and $k_2 \in K^*$ such that $\langle k_1, k_2 \rangle = 0$ we have that $\langle \mathcal{J}_f(x) k_1, k_2 \rangle \geq 0$.
\end{thm}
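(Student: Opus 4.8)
The plan is to prove both implications by passing through the variational (linearized) flow and studying its interaction with the cone $K$. The bridge in both directions is the fact that, since $f\in C^1$, the flow $\phi_t$ depends $C^1$ on initial conditions, and for fixed $x$ the derivative $v(t)=D\phi_t(x)\,k_1$ solves the variational equation $\dot v=\mathcal{J}_f(\phi_t(x))\,v$ with $v(0)=k_1$. Monotonicity with respect to $\le_K$ is equivalent to $D\phi_t(x)\,K\subseteq K$ for every $x$ and every $t\ge 0$, i.e.\ to forward invariance of $K$ under the variational flow, and the stated Jacobian inequality is precisely the Nagumo-type subtangency condition for that invariance. I would first record the harmless reduction that $k_1$ may as well range over all of $K$: if $k_1\in\operatorname{Int}(K)$ and $k_2\in K^*\setminus\{0\}$ then $\langle k_1,k_2\rangle>0$, so the orthogonality constraint $\langle k_1,k_2\rangle=0$ already forces $k_1\in\partial K$, and the condition is equivalent to \emph{cross-positivity}: $\langle \mathcal{J}_f(x)k_1,k_2\rangle\ge 0$ for all $k_1\in K$ and $k_2\in K^*$ with $\langle k_1,k_2\rangle=0$.

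For necessity (monotone $\Rightarrow$ the inequality), fix $x$, $k_1\in\partial K$, and $k_2\in K^*$ with $\langle k_1,k_2\rangle=0$. For small $\epsilon>0$ the points satisfy $x\le_K x+\epsilon k_1$, so monotonicity gives $\phi_t(x+\epsilon k_1)-\phi_t(x)\in K$ for all $t\ge 0$. Dividing by $\epsilon$, letting $\epsilon\to 0^+$, and using closedness of $K$ yields $v(t):=D\phi_t(x)\,k_1\in K$ for all $t\ge 0$. Setting $g(t)=\langle v(t),k_2\rangle$, we have $g(t)\ge 0$ (as $v(t)\in K$, $k_2\in K^*$) and $g(0)=\langle k_1,k_2\rangle=0$, so $g$ attains a minimum on $[0,\infty)$ at the left endpoint $t=0$ and hence $g'(0)\ge 0$. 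Differentiating via the variational equation gives $g'(0)=\langle \mathcal{J}_f(x)\,k_1,k_2\rangle$, which is exactly the claimed inequality. This direction uses only smooth dependence on initial data and closedness of $K$.

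For sufficiency (the inequality $\Rightarrow$ monotone), take solutions with $y(0)-x(0)\in K$ and set $w(t)=y(t)-x(t)$. The mean-value representation $f(y)-f(x)=A(t)\,w$ with $A(t)=\int_0^1 \mathcal{J}_f\!\big(x(t)+s\,w(t)\big)\,ds$ shows that $w$ obeys the linear time-varying system $\dot w=A(t)\,w$. The set of matrices that are cross-positive relative to $K$ is cut out by linear inequalities in the matrix entries, hence is a convex cone, so the average $A(t)$ inherits cross-positivity from the pointwise hypothesis on $\mathcal{J}_f$. It then remains to show that a linear flow with cross-positive generator at each time leaves $K$ invariant: by the Nagumo/subtangency criterion this reduces, at a boundary point $w\in\partial K$, to checking $A(t)\,w\in T_K(w)$, and for a closed convex cone one computes $T_K(w)=\{d:\langle d,k_2\rangle\ge 0\ \forall\,k_2\in K^* \text{ with } \langle w,k_2\rangle=0\}$, so subtangency is literally cross-positivity with $k_1=w$. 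Hence $w(t)\in K$ for all $t$, i.e.\ $x(t)\le_K y(t)$.

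I expect the main obstacle to lie in the sufficiency direction, for two reasons. First, the mean-value representation of $f(y)-f(x)$ requires the segment $[x(t),y(t)]$ to lie in the domain $\mathbb{X}$; this is where (order-)convexity of the state space enters, and although the theorem is phrased for a general open forward-invariant $\mathbb{X}$, in our setting the relevant state spaces $S_a=\mathbb{R}^n_{\ge 0}\cap\{a+S\}$ are convex, so the step is justified. Second, one must invoke the subtangency characterization of invariance for time-varying linear systems and verify that cross-positivity is exactly the tangent-cone condition for $K$; this requires care because $K$ is only assumed proper and pointed, so the tangent and normal cones must be handled through $K^*$ rather than a smooth boundary. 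By contrast, the necessity direction is essentially a one-line derivative-sign argument once $C^1$ dependence of the flow on initial conditions is available.
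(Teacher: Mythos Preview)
The paper does not prove this theorem: it is quoted as Proposition~1.5 of Walcher and invoked as a black box, so there is no ``paper's own proof'' to compare against.

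Your argument is the standard cross-positivity proof and is essentially correct. The necessity direction is clean. For sufficiency you correctly flag the one genuine issue: the mean-value representation $f(y)-f(x)=A(t)\,(y-x)$ with $A(t)=\int_0^1\mathcal{J}_f\big(x(t)+s\,w(t)\big)\,ds$ requires the segment $[x(t),y(t)]$ to lie in the domain of $f$, which is not guaranteed for an arbitrary open forward-invariant $\mathbb{X}$. Your resolution---observing that the state spaces actually used in this paper (relative interiors of stoichiometric compatibility classes) are convex---is adequate for the paper's applications, but it does leave the theorem under-proved in the generality stated. If you want to close that gap, the usual route is to avoid the mean-value step on the nonlinear system altogether: first show, via Nagumo applied to the \emph{linear} non-autonomous variational system $\dot v=\mathcal{J}_f(\phi_t(x))v$, that cross-positivity forces $D\phi_t(x)K\subseteq K$ for every $x$ and $t\ge 0$ (no convexity needed here), and then pass from invariance of $K$ under every $D\phi_t(x)$ to monotonicity of $\phi_t$ by a local argument near each time where $\phi_t(y)-\phi_t(x)$ touches $\partial K$. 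Either way, the substance of your proof is right.
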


By restricting to an invariant subspace, we can relax the requirement that the cone be proper. If a system $\dot{x} = f(x)$ has an invariant subspace $S$, then we define the restricted system to have its state space be $S$, and every point in $S$ is evolving in time according to $\dot{x} = f(x)$. Thus we will only need the cone to be proper in an invariant subspace. In particular, we have:

\begin{cor}
Suppose a system has an invariant linear subspace \( S \subset \mathbb{R}^n \), and that for all \( j \in \mathbb{R}^n \), the affine space \( S + j \) is also invariant. Let \( K \subset S \) be a cone with the same dimension as \( S \). Suppose further that for all \( x \in S + j \), and for all \( k_1 \in K \), \( k_2 \in K^* \) satisfying \( \langle k_1, k_2 \rangle = 0 \), we have
\[
\langle \mathcal{J}_f(x) k_1, k_2 \rangle \geq 0.
\]
Then the system restricted to \( S + j \) is monotone with respect to \( K \). In particular, the system is monotone when restricted to \( S \).
\end{cor}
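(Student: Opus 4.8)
The plan is to apply Theorem~\ref{thm:monotone} to the system restricted to the affine space $S+j$, working entirely within the subspace topology of $S+j$ rather than within $\mathbb{R}^n$. Fix $j \in \mathbb{R}^n$. Since $S+j$ is forward invariant, trajectories starting in $S+j$ remain there, so the vector field is tangent to $S+j$; equivalently $f(x) \in S$ for every $x \in S+j$. Identifying $S+j$ with a copy of $\mathbb{R}^{\dim S}$, the restricted system has state space $S+j$, which is open in its own subspace topology, and the cone $K \subset S$ is full-dimensional in $S$ (it has the same dimension as $S$), hence proper when viewed inside the ambient space $S$. Together with pointedness (the standing assumption that makes the cone order a partial order), this places us in exactly the setting of Theorem~\ref{thm:monotone}, with $S$ playing the role of the ambient space.

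First I would check that the Jacobian of the restricted field equals the restriction of $\mathcal{J}_f$. For $v \in S$ and $x \in S+j$, the directional derivative of $f$ along $v$ is $\mathcal{J}_f(x)v$; since $f$ maps $S+j$ into $S$ and $S$ is a linear subspace, this derivative again lies in $S$. Thus $\mathcal{J}_f(x)$ restricts to a linear map $S \to S$, and this restriction is precisely the Jacobian of the restricted system. In particular, for $k_1, k_2 \in S$ the quantity $\langle \mathcal{J}_f(x)k_1, k_2\rangle$ computed in $\mathbb{R}^n$ agrees with the corresponding quantity for the restricted system in $S$, so no projection correction is needed.

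Next I would reconcile the dual cones. Let $K^{\ast}_S = \{w \in S \mid \langle w, k\rangle \geq 0 \text{ for all } k \in K\}$ be the dual of $K$ taken inside $S$; this is the dual cone relevant to Theorem~\ref{thm:monotone} for the restricted system. Because $K \subset S$, every $w \in K^{\ast}_S$ satisfies $\langle w, k\rangle \geq 0$ for all $k \in K$ as a vector in $\mathbb{R}^n$, so $K^{\ast}_S \subseteq K^{\ast}$. Likewise the relative boundary $\partial_S K$ of $K$ inside $S$ satisfies $\partial_S K \subset K$. Consequently any pair $(k_1, k_2)$ with $k_1 \in \partial_S K$, $k_2 \in K^{\ast}_S$, and $\langle k_1, k_2\rangle = 0$ is in particular a pair with $k_1 \in K$ and $k_2 \in K^{\ast}$; the hypothesis of the corollary then yields $\langle \mathcal{J}_f(x)k_1, k_2\rangle \geq 0$. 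This is exactly the infinitesimal condition required by the sufficiency direction of Theorem~\ref{thm:monotone} applied to the restricted system, so the system restricted to $S+j$ is monotone with respect to $K$. Taking $j=0$ gives monotonicity on $S$ itself.

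The hard part will be the dual-cone bookkeeping: one must verify that the dual cone computed inside the lower-dimensional subspace $S$ embeds into the ambient dual cone $K^{\ast}$, so that the corollary's hypothesis—stated in terms of the ambient $K^{\ast}$—genuinely covers the condition Theorem~\ref{thm:monotone} demands for the restricted system. A secondary point to record carefully is that invariance of $S+j$ forces $f$ to map into $S$, which is what lets the restricted Jacobian be the honest restriction of $\mathcal{J}_f$ and keeps all inner products unchanged between the ambient and restricted computations.
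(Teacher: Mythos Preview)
Your proposal is correct and takes essentially the same approach as the paper: restrict the system to $S+j$, use invariance to get $\operatorname{Im}(\mathcal{J}_f(x))\subset S$, and then reconcile the dual cone in $S$ with the ambient $K^*$ so that Theorem~\ref{thm:monotone} applies. The only cosmetic difference is that the paper phrases the dual-cone step via orthogonal projection (writing $\langle \mathcal{J}_f(x)k_1,k_2\rangle=\langle \mathcal{J}_f(x)k_1,\pi(k_2)\rangle$ and noting $\pi(k_2)\in S\cap K^*$), whereas you observe directly that $K^*_S=K^*\cap S\subseteq K^*$; these are the same observation in slightly different packaging.
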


\begin{proof}
Since \( \operatorname{Im}(\mathcal{J}_f(x)) \subset S \), we have
\[
\langle \mathcal{J}_f(x) k_1, k_2 \rangle = \langle \mathcal{J}_f(x) k_1, \pi(k_2) \rangle,
\]
where \( \pi(k_2) \) denotes the orthogonal projection of \( k_2 \) onto \( S \). Thus, it suffices to consider \( k_2 \in S \cap K^* \) instead of \( k_2 \in K^* \). By Theorem~\ref{thm:monotone}, the system is monotone on \( S + j \), and in particular on \( S \).
\end{proof}

Note that this argument also applies to the relative interior of any sets of the form $\mathbb{R}^n_{\geq 0} \cap (S + j)$. The state spaces of the reaction networks we consider will be of this form.

In the sequel we will write $\mathcal{J}_f$ instead of $\mathcal{J}_f(x)$, where it is to be understood that statements involving $\mathcal{J}_f$ are to hold for all $x$ in our system's state space.  We can manipulate the expression in Theorem \ref{thm:monotone} to get a more geometric statement, specifically for reaction networks. First, we will define a few regions:

We will need a criterion for determining strong monotonicity.

\begin{thm}[{\cite[Theorem~3.6]{HIRSCH2006239}}]
\label{thm:strong monotone strengthened}
Suppose we have a system $\dot{x} = f(x)$ with an open state space and a proper, pointed cone $K$. Suppose also that
\[
\langle \mathcal{J}_f k_1, k_2 \rangle \geq 0
\]
for all $k_1 \in \partial K$ and $k_2 \in K^*$ such that $\langle k_1,k_2 \rangle = 0$. If for each $k_1 \in \partial K$, there exists a $k_2 \in K^*$ such that $\langle k_1,k_2 \rangle = 0$ and
\[
\langle \mathcal{J}_f k_1, k_2 \rangle > 0,
\]
then the system is strongly monotone with respect to $K$.
\end{thm}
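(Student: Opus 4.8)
The plan is to reduce strong monotonicity to a statement about the linear variational equation governing the difference of two trajectories, and then to exploit the strict inequality to show this difference cannot touch $\partial K$ at any positive time.

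First I would set $w(t) = y(t) - x(t)$ for two trajectories with $w(0) \in \partial K \setminus \{0\}$. Assuming (as holds for the reaction-network state spaces of interest) that the state space is convex, the segment $\{x(t) + s\,w(t) : s \in [0,1]\}$ remains in the state space, and the fundamental theorem of calculus gives $\dot w = f(y) - f(x) = A(t)\,w$, where $A(t) = \int_0^1 \mathcal{J}_f(x(t) + s\,w(t))\,ds$. Averaging the non-strict hypothesis over $s$ shows that $\langle A(t) k_1, k_2 \rangle \ge 0$ for every $k_1 \in \partial K$ and $k_2 \in K^*$ with $\langle k_1, k_2\rangle = 0$; equivalently, by Theorem~\ref{thm:monotone}, the system is monotone, so $w(0) \in K$ forces $w(t) \in K$ for all $t \ge 0$. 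Since $w$ solves a linear homogeneous ODE and $w(0) \ne 0$, uniqueness gives $w(t) \ne 0$ for all $t$.

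Next comes the core argument. Suppose, for contradiction, that $w(t_0) \in \partial K$ for some $t_0 > 0$. Put $k_1 = w(t_0) \in \partial K \setminus \{0\}$ and apply the strict hypothesis to obtain $k_2^* \in K^* \setminus \{0\}$ with $\langle k_1, k_2^*\rangle = 0$ and $\langle \mathcal{J}_f(x(t_0))\, k_1, k_2^*\rangle > 0$. Consider the scalar function $g(t) = \langle w(t), k_2^*\rangle$. Monotonicity gives $w(t) \in K$, so $g(t) \ge 0$ for all $t \ge 0$, while $g(t_0) = \langle k_1, k_2^*\rangle = 0$. On the other hand $g'(t_0) = \langle A(t_0) k_1, k_2^*\rangle$, which I will argue is strictly positive; granting that, $g$ increases strictly through the value $0$ at $t_0$, so $g(t) < 0$ for $t$ slightly less than $t_0$, contradicting $g \ge 0$. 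Hence no positive time $t_0$ has $w(t_0) \in \partial K$, i.e.\ $w(t) \in \operatorname{Int}(K)$ for all $t > 0$, which is even stronger than the asserted conclusion on $(0,T)$.

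The main obstacle is the step $g'(t_0) = \langle A(t_0) k_1, k_2^*\rangle > 0$: strictness must survive the integration defining $A(t_0)$. Here I would use that $k_2^*$ is a contact functional for $k_1$ (that is, $\langle k_1, k_2^*\rangle = 0$), so the non-strict hypothesis gives $\langle \mathcal{J}_f(\xi)\, k_1, k_2^*\rangle \ge 0$ at every point $\xi$ of the segment, while the strict hypothesis gives a strict sign at $\xi = x(t_0)$; by continuity of $\xi \mapsto \mathcal{J}_f(\xi)$ the integrand is strictly positive on a neighborhood of $s = 0$ and nonnegative elsewhere, so its integral is positive. The only remaining technical points are the convexity of the state space (needed to express $A(t)$ as an average of Jacobians along the segment) and the one-sided Taylor estimate for $g$ near $t_0$, both of which are routine.
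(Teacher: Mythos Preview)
The paper does not prove this theorem at all: it is quoted verbatim as \cite[Theorem~3.6]{HIRSCH2006239} and used as a black box, so there is no ``paper's own proof'' to compare against. Your proposal therefore supplies an argument where the authors simply cite one.

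That said, your sketch is essentially sound and is the standard variational route. A couple of remarks. First, you correctly flag convexity of the state space as an extra assumption needed to write $A(t)=\int_0^1 \mathcal{J}_f(x(t)+s\,w(t))\,ds$; the theorem as stated only assumes an open state space, so strictly speaking your argument proves a slightly weaker statement than the one cited (though one that suffices for every application in this paper). Second, under the paper's convention that ``statements involving $\mathcal{J}_f$ are to hold for all $x$ in the state space,'' the strict hypothesis actually gives $\langle \mathcal{J}_f(\xi)k_1,k_2^*\rangle>0$ at \emph{every} $\xi$, not just at $\xi=x(t_0)$; your continuity argument near $s=0$ is then unnecessary (though harmless, and it makes the proof robust to the weaker reading where $k_2$ may depend on $x$). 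Finally, the contradiction step is clean: $g\ge 0$, $g(t_0)=0$, $g'(t_0)>0$ forces $g<0$ on a left neighborhood of $t_0$, and since $t_0>0$ this neighborhood meets $[0,\infty)$. Your conclusion that $w(t)\in\operatorname{Int}(K)$ for \emph{all} $t>0$ is indeed stronger than the paper's definition of strong monotonicity requires.
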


\subsection{Reaction Network Results}

\begin{define}
\label{def:Q regions}
    Given a reaction vector $v \in \mathbb{R}^n$ corresponding to a reversible reaction, define:
    \[
    Q_1(v) = \{x \in \mathbb{R}^n \mid (x)_i(v)_i \geq 0 \text{ for all } 1 \leq i \leq n \}.
    \]
    For a reaction vector $v$ corresponding to an irreversible reaction, with the reactant species indexed by $I$, define:
    \[
    Q_1(\Gamma_1) = \{x \in \mathbb{R}^n| (x)_i (v)_i \geq 0 \text{ for all } i \in I \}
    \]
    
    Additionally, define:
    \begin{align*}
    Q_1^+(v) &= Q_1(v) \setminus Q_1(-v), \\
    Q_1^-(v) &= Q_1(-v) \setminus Q_1(v), \\
    Q_2(v) &= Q_1(v) \cap Q_1(-v).
    \end{align*}
\end{define}

For example, take the vector $v = [1,2,0,-1,-2,0]$. Then we have that $Q_1(v)$ consists of all vectors with sign pattern $[\geq,\geq,*,\leq,\leq,*]$ (here $\geq$ indicates nonnegative and $\leq$ nonpositive, and $*$ can be anything). We have that $Q_2(v)$ consists of all vectors with sign pattern $[0,0,*,0,0,*]$. Lastly, we have that $Q_1^+(v)$ contains vectors of the form $[\geq,\geq,*,\leq,\leq,*]$, but at least one of the non-star entries must be nonzero, such as $[1,0,0,0,0,0]$ but not $[0,0,1,0,0,0]$.

Notably, the following properties hold in general:
\[
Q_1^+(-v) = Q_1^-(v), \quad Q_2(v) = Q_2(-v), \quad Q_1(v) \cup Q_1(-v) = Q_1^-(v) \cup Q_2(v) \cup Q_1^+(v).
\]

For the rest of this section, we assume any cones we consider are closed, convex, pointed, and polyhedral. Recall that the dual face is defined in section \ref{sec:background}.

\begin{lem}
\label{lem:only need consider extreme rays}
Let \( K \) be a cone generated by extreme ray vectors \( \{k_i\}_{i=1}^n \). Suppose \( k \) is a convex combination of a subset of these extreme rays, i.e.,  
\[
k = \sum_j \lambda_j k_j, \quad \text{where } \lambda_j > 0, \sum_j \lambda_j = 1.
\]
Then, the dual face satisfies \( k^* \subseteq k_j^* \) for each \( k_j \) in the combination.
\end{lem}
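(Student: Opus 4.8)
The plan is to unwind the definition of the dual face and then exploit the elementary fact that a weighted sum of nonnegative numbers with strictly positive weights vanishes only if every summand vanishes. Recall from Section~\ref{sec:background} that for a vector $v$ the dual face is $v^* = v^{\perp}\cap K^*$, that is, $v^* = \{w \in K^* \mid \langle v,w\rangle = 0\}$. So the goal is to show that every $w$ lying in $K^*$ and orthogonal to $k$ is automatically orthogonal to each extreme ray $k_j$ appearing in the convex combination.

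First I would fix an arbitrary $w \in k^*$, so that $w \in K^*$ and $\langle k, w\rangle = 0$. Since each $k_j$ is an extreme ray of $K$, it lies in $K$, and because $w \in K^*$ the defining property of the dual cone gives $\langle k_j, w\rangle \geq 0$ for every $j$ in the combination.

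Next I would expand the orthogonality condition using linearity of the inner product:
\[
0 = \langle k, w\rangle = \Big\langle \sum_j \lambda_j k_j,\; w\Big\rangle = \sum_j \lambda_j \langle k_j, w\rangle.
\]
Each summand $\lambda_j \langle k_j, w\rangle$ is nonnegative, since $\lambda_j > 0$ and $\langle k_j, w\rangle \geq 0$, yet the total is zero. Hence every summand must be zero, and as $\lambda_j > 0$ this forces $\langle k_j, w\rangle = 0$ for each $j$ in the combination. Therefore $w \in k_j^{\perp}$, and combined with $w \in K^*$ we conclude $w \in k_j^{\perp}\cap K^* = k_j^*$. Since $w \in k^*$ was arbitrary, this yields $k^* \subseteq k_j^*$ for every $k_j$ in the combination.

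There is no genuine obstacle in this argument; the only point requiring care is the correct invocation of the dual-cone inequality $\langle k_j, w\rangle \geq 0$, which relies on $k_j \in K$ (guaranteed because the $k_j$ are extreme rays of $K$) and on $w \in K^*$. Note that the proof uses only $\lambda_j > 0$ for the indices actually present in the combination, which is exactly the stated hypothesis, and it requires no polyhedrality or pointedness assumption beyond the existence of the pairing between $K$ and $K^*$.
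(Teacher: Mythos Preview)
Your proof is correct and follows essentially the same argument as the paper: both take an arbitrary element of $k^*$, expand $0=\langle k,\cdot\rangle$ via the convex combination, and use $\lambda_j>0$ together with nonnegativity of each $\langle k_j,\cdot\rangle$ to conclude orthogonality to every $k_j$. Your version is in fact slightly more explicit in justifying the nonnegativity step via the dual-cone inequality.
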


\begin{proof}
By definition, the dual face of \( k \) is
\[
k^* = \{ k' \in K^* \mid \langle k, k' \rangle = 0 \}.
\]
Substituting \( k = \sum_j \lambda_j k_j \), we obtain:
\[
0 = \langle k, k' \rangle = \sum_j \lambda_j \langle k_j, k' \rangle.
\]
Since each \( \lambda_j > 0 \), it follows that \( \langle k_j, k' \rangle = 0 \) for all \( j \). Thus, \( k' \in k_j^* \) for each contributing \( k_j \), implying \( k^* \subseteq k_j^* \).
\end{proof}

\begin{lem}
\label{lem:signs on dual faces}
Consider a reaction network with a single reaction vector $\Gamma_1$ and the system $\dot{x} = \Gamma_1 R(x)$. Then the system is monotone with respect to a cone $K$ if and only if the following conditions hold:

\begin{enumerate}
    \item For all extreme rays $k \in K$, we have $k \in Q_1(\Gamma_1) \cup Q_1(-\Gamma_1)$.
    \item If extreme ray $k \in Q_1^+(\Gamma_1)$, then for all $k' \in k^*$, we have $\langle \Gamma_1,k' \rangle \leq 0$.
    \item If extreme ray $k \in Q_1^-(\Gamma_1)$, then for all $k' \in k^*$, we have $\langle \Gamma_1,k' \rangle \geq 0$.
\end{enumerate}
\end{lem}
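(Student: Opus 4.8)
The plan is to apply the monotonicity criterion of Theorem~\ref{thm:monotone} directly, exploiting that for a single reaction the Jacobian is rank one. Writing $f(x)=\Gamma_1 R(x)$, the chain rule gives $\mathcal{J}_f=\Gamma_1(\nabla R)^t$, so that for any $k_1,k_2$,
\[
\langle \mathcal{J}_f k_1, k_2\rangle = \langle \nabla R, k_1\rangle\,\langle \Gamma_1, k_2\rangle .
\]
Thus the criterion ``$\langle \mathcal{J}_f k_1,k_2\rangle\ge 0$ whenever $k_1\in\partial K$, $k_2\in K^*$, $\langle k_1,k_2\rangle=0$'' factors into a product of two scalars, and the whole lemma reduces to controlling their signs. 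By Lemma~\ref{lem:only need consider extreme rays}, a boundary point $k_1$ is a positive combination of extreme rays $k_j$ whose dual faces all contain $k_1^*$; since $\langle\nabla R,\cdot\rangle$ is linear, it suffices to verify the sign condition with $k_1$ ranging only over extreme rays, which is exactly the setting of conditions (1)--(3).

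The first substantive step is a sign computation for $\langle\nabla R,k\rangle$ on the regions of Definition~\ref{def:Q regions}. Using the kinetic sign constraints ($\partial R/\partial x_j\ge 0$ for reactants, $\le 0$ for products, $=0$ otherwise), I would show that for every extreme ray $k$ and every admissible $R$,
\[
k\in Q_1(\Gamma_1)\Rightarrow\langle\nabla R,k\rangle\le 0,\quad k\in Q_1(-\Gamma_1)\Rightarrow\langle\nabla R,k\rangle\ge 0,\quad k\in Q_2(\Gamma_1)\Rightarrow\langle\nabla R,k\rangle=0,
\]
each following coordinatewise because $k\in Q_1(\pm\Gamma_1)$ forces $(k)_i$ to have a prescribed sign on the support of $\Gamma_1$. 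Sufficiency then falls out of the partition $Q_1(\Gamma_1)\cup Q_1(-\Gamma_1)=Q_1^+(\Gamma_1)\cup Q_2(\Gamma_1)\cup Q_1^-(\Gamma_1)$: assuming (1), each extreme ray lies in one of these three pieces; on $Q_2(\Gamma_1)$ the product vanishes, while on $Q_1^+(\Gamma_1)$ (resp.\ $Q_1^-(\Gamma_1)$) condition (2) (resp.\ (3)) supplies $\langle\Gamma_1,k'\rangle\le 0$ (resp.\ $\ge 0$), matching the sign of $\langle\nabla R,k\rangle$ so that the product is nonnegative.

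For necessity I would argue contrapositively, constructing, whenever a condition fails, an admissible $R$ and a point $x$ violating the criterion. For (2), if $k\in Q_1^+(\Gamma_1)$ and some $k'\in k^*$ has $\langle\Gamma_1,k'\rangle>0$, then because $k\notin Q_2(\Gamma_1)$ there is a support coordinate where $k$ strictly matches $\Gamma_1$; choosing $x$ in the interior with all relevant species present and concentrating the kinetics on that coordinate realizes $\langle\nabla R,k\rangle<0$, making the product negative. Condition (3) is symmetric. For (1), if an extreme ray $k\notin Q_1(\Gamma_1)\cup Q_1(-\Gamma_1)$, then $k$ has coordinates of both prescribed signs on the support of $\Gamma_1$, so the two contributions to $\langle\nabla R,k\rangle$ have opposite signs and the kinetics can be tuned to make $\langle\nabla R,k\rangle$ either positive or negative; since $k$ is not parallel to $\Gamma_1$ and the dual face of an extreme ray of a pointed cone spans the hyperplane $k^\perp$ (so $(\spans k^*)^\perp=\spans k\not\ni\Gamma_1$), some $k'\in k^*$ has $\langle\Gamma_1,k'\rangle\neq 0$, and the free sign of $\langle\nabla R,k\rangle$ then forces a negative product.

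The main obstacle is the realizability step in the necessity argument: I must exhibit genuine admissible kinetics $R$ (satisfying $C^1$, the sign conditions, and the strict-positivity-when-present conditions) whose partial derivatives at a chosen interior point $x$ produce the required sign of $\langle\nabla R,k\rangle$. This needs an explicit construction showing the kinetic class is rich enough to prescribe the relevant partials independently, with the correct signs, at a point where all pertinent species are present, and care that such an $x$ lies in the state space; the reversible and irreversible cases must be treated separately, since irreversible reactions contribute no product-species derivatives.
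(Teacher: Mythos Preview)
Your proposal is correct and follows essentially the same route as the paper: factor the Jacobian as the rank-one product $\langle\nabla R,k_1\rangle\langle\Gamma_1,k_2\rangle$, reduce to extreme rays via Lemma~\ref{lem:only need consider extreme rays}, read off sufficiency from the sign partition, and for necessity exploit that the dual face of an extreme ray spans $k^\perp$ together with the freedom in the kinetics to realize either sign of $\langle\nabla R,k\rangle$. The paper is terser on the realizability step you flag as the main obstacle, simply asserting that ``we can choose reaction rates such that $\langle\partial R_1,k_1\rangle$ takes any sign,'' but the argument is the same.
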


\begin{proof}
Suppose the system is monotone with respect to \( K \). Since there is only one reaction, we have
\[
\mathcal{J}_f = \Gamma_1 \partial R_1,
\]
where \( \partial R_1 = [\partial_1 R_1, \partial_2 R_1, \dots, \partial_n R_1] \). By Theorem \ref{thm:monotone}, monotonicity holds if and only if
\[
\langle \mathcal{J}_f k_1, k_2 \rangle \geq 0 \quad \forall k_2 \in k_1^*.
\]
Expanding this condition:
\[
\langle \mathcal{J}_f k_1, k_2 \rangle = \langle \Gamma_1 \partial R_1 k_1, k_2 \rangle = \langle \Gamma_1, k_2 \rangle \langle \partial R_1, k_1\rangle.
\]

If \( k_1 \notin Q_1(\Gamma_1) \cup Q_1(-\Gamma_1) \), we can choose reaction rates such that \( \langle \partial R_1, k_1 \rangle \) takes any sign, forcing \( \langle \Gamma_1, k_2 \rangle = 0 \) for all \( k_2 \in k_1^* \). Since \( k_1^* \) is an \( (n-1) \)-dimensional face, this implies \( \Gamma_1 \) is perpendicular to \( k_1^* \), making it a multiple of \( k_1 \), contradicting the assumption. Thus, condition (1) must hold.

For condition (2), if \( k_1 \in Q_1^+(\Gamma_1) \), then \( \langle \partial R_1, k_1 \rangle < 0 \), requiring \( \langle \Gamma_1, k_2 \rangle \leq 0 \) to satisfy the monotonicity condition. Similarly, for \( k_1 \in Q_1^-(\Gamma_1) \), we must have \( \langle \Gamma_1, k_2 \rangle \geq 0 \), establishing condition (3).

Now, suppose the three conditions hold. For an extreme ray \( k_1 \), we already verified that \( \langle \mathcal{J}_f k_1, k_2 \rangle = \langle \Gamma_1, k_2 \rangle \langle \partial R_1, k_1\rangle \geq 0 \). If \( k_1 \) is not extreme ray, express it as a convex combination \( k_1 = \sum_i \lambda_i l_i \) for extreme rays \( l_i \). By Lemma \ref{lem:only need consider extreme rays}, any \( k_2 \in k_1^* \) is also in \( l_i^* \) for all \( i \), and we have already established \( \langle \mathcal{J}_f l_i, k_2 \rangle \geq 0 \) for $k_2 \in l_i^*$. Thus, 
\[
\langle \mathcal{J}_f k_1, k_2 \rangle = \sum_i \lambda_i \langle \mathcal{J}_f l_i, k_2 \rangle \geq 0.
\]
By Theorem \ref{thm:monotone}, the system is monotone with respect to \( K \).
\end{proof}

\begin{cor}
\label{lem:reac_vec_extreme ray}
Let a reaction network consist of a single reaction vector \( \Gamma_1 \), which is an extreme ray of a cone \( K \) or \( -K \). Assume all extreme rays \( k \) of \( K \) satisfy \( k \in Q_1(\Gamma_1) \cup Q_1(-\Gamma_1) \). Then the network is monotone with respect to \( K \) if \( \Gamma_1 \) (respectively, \( -\Gamma_1 \)) is the unique extreme ray in \( Q_1^+(\Gamma_1) \) (respectively, \( Q_1^-(\Gamma_1) \)).
\end{cor}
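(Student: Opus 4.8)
The plan is to verify the three conditions of Lemma~\ref{lem:signs on dual faces}. Condition~(1) is exactly the standing hypothesis that every extreme ray of $K$ lies in $Q_1(\Gamma_1)\cup Q_1(-\Gamma_1)$, so all the content sits in conditions~(2) and~(3). I would treat the two cases of the statement separately, starting with the case where $\Gamma_1$ is an extreme ray of $K$; the case where $\Gamma_1$ is an extreme ray of $-K$ is entirely symmetric after replacing $K$ by $-K$ and $\Gamma_1$ by $-\Gamma_1$.

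The crucial observation is that one of the two sign conditions comes for free. Since $\Gamma_1\in K$ and every covector $k'$ appearing in conditions~(2)--(3) lies in some dual face $k^*\subseteq K^*$, the defining property of the dual cone gives $\langle \Gamma_1,k'\rangle\ge 0$ automatically. This immediately settles condition~(3): for any extreme ray $k\in Q_1^-(\Gamma_1)$ and any $k'\in k^*$ we have $\langle \Gamma_1,k'\rangle\ge 0$, as required, with no further input.

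For condition~(2) I would first record that $\Gamma_1\in Q_1^+(\Gamma_1)$: the identity $(\Gamma_1)_i(\Gamma_1)_i=(\Gamma_1)_i^2\ge 0$ puts $\Gamma_1\in Q_1(\Gamma_1)$, while $\Gamma_1\neq 0$ prevents $\Gamma_1\in Q_1(-\Gamma_1)$. Hence the uniqueness hypothesis says that $\Gamma_1$ is the only extreme ray of $K$ lying in $Q_1^+(\Gamma_1)$, so condition~(2) need only be checked for $k=\Gamma_1$. For $k'\in\Gamma_1^*=\Gamma_1^{\perp}\cap K^*$ we have $\langle\Gamma_1,k'\rangle=0\le 0$ by the definition of the dual face, so condition~(2) holds and Lemma~\ref{lem:signs on dual faces} yields monotonicity with respect to $K$. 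The symmetric case simply reverses the roles: when $-\Gamma_1\in K$, the dual-cone inequality makes condition~(2) automatic (now $\langle\Gamma_1,k'\rangle=-\langle-\Gamma_1,k'\rangle\le 0$), while the uniqueness of $-\Gamma_1$ as the sole extreme ray in $Q_1^-(\Gamma_1)$, together with $\langle\Gamma_1,k'\rangle=0$ on the dual face $(-\Gamma_1)^*=\Gamma_1^{\perp}\cap K^*$, delivers condition~(3).

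The proof is short once the right dichotomy is seen, so the main difficulty is conceptual bookkeeping rather than calculation: one must recognize that in each case exactly one of the two sign requirements is supplied for free by the membership $\Gamma_1\in K$ (or $-\Gamma_1\in K$) via the dual-cone inequality, while the other is forced to hold with equality because the uniqueness hypothesis pins the relevant extreme ray down to $\pm\Gamma_1$, whose dual face annihilates $\Gamma_1$. The one small point not to overlook is the verification that $\Gamma_1$ genuinely lies in $Q_1^+(\Gamma_1)$, so that it is indeed the extreme ray to which the uniqueness hypothesis applies; this uses only $\Gamma_1\neq 0$.
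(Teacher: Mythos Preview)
Your proposal is correct and follows essentially the same approach as the paper: verify condition~(1) by hypothesis, obtain one of conditions~(2) or~(3) for free from the dual-cone inequality $\langle \Gamma_1,k'\rangle\ge 0$ (since $\pm\Gamma_1\in K$), and handle the remaining condition by noting that uniqueness forces $k=\pm\Gamma_1$, whence $\langle\Gamma_1,k'\rangle=0$ on the dual face. Your explicit check that $\Gamma_1\in Q_1^+(\Gamma_1)$ is a nice touch that the paper leaves implicit.
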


\begin{proof}
Suppose \( \Gamma_1 \) is a positive multiple of an extreme ray of \( K \), so \( \Gamma_1 \in K \). Then, for all \( k' \in K^* \), we have:
\[
\langle \Gamma_1, k' \rangle \geq 0,
\]
so condition (3) of Lemma~\ref{lem:signs on dual faces} is satisfied.

Since \( \Gamma_1 \) is the only extreme ray of \( K \) lying in \( Q_1^+(\Gamma_1) \), condition (2) applies only to \( k = \Gamma_1 \). In this case, \( k^* \subseteq K^* \), and the dual pairing yields:
\[
\langle \Gamma_1, k' \rangle = 0 \quad \text{for all } k' \in k^*,
\]
so condition (2) is also satisfied.

Condition (1) holds by hypothesis. Therefore, all conditions of Lemma~\ref{lem:signs on dual faces} are satisfied, and the system is monotone with respect to \( K \).

The case where \( -\Gamma_1 \) is the extreme ray of \( K \) is handled identically.
\end{proof}

\begin{define}
    We say a face $F$ of a cone is a \textit{mixed face for $\Gamma_i$} (or just a \textit{mixed face} if it is clear from the context what $\Gamma_i$ is) if $F$ contains extreme ray vectors from both $Q_1^+(\Gamma_i)$ and $Q_1^-(\Gamma_i)$.
\end{define}

\begin{lem}
\label{lem:contained in mixed faces}
    Suppose that $\Gamma_i$ is monotone with respect to a cone $K$. If $F \subseteq K$ is a mixed face for $\Gamma_i$, then $\Gamma_i \in \spans(F)$.
\end{lem}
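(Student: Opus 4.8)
The plan is to squeeze $\langle \Gamma_i, \cdot\rangle$ to zero on the dual face $F^*$ using the sign conditions from Lemma~\ref{lem:signs on dual faces}, and then to convert that orthogonality into the membership $\Gamma_i \in \spans(F)$ via the duality between a face and its dual face. Since $F$ is a mixed face, I would first fix two extreme rays $k_+ \in F \cap Q_1^+(\Gamma_i)$ and $k_- \in F \cap Q_1^-(\Gamma_i)$, which exist by definition of a mixed face. Because $\Gamma_i$ is monotone with respect to $K$, Lemma~\ref{lem:signs on dual faces} applies and gives the sign information: for every $k' \in k_+^*$ we have $\langle \Gamma_i, k'\rangle \le 0$, and for every $k' \in k_-^*$ we have $\langle \Gamma_i, k'\rangle \ge 0$.

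Next I would take an arbitrary $k' \in F^* = (\spans F)^\perp \cap K^*$ and observe that it lies in \emph{both} extreme-ray dual faces: indeed $k_+, k_- \in \spans(F)$, so $k' \perp \spans(F)$ forces $\langle k_+, k'\rangle = \langle k_-, k'\rangle = 0$, and together with $k' \in K^*$ this means $k' \in k_+^* \cap k_-^*$. Feeding this into the two inequalities above yields simultaneously $\langle \Gamma_i, k'\rangle \le 0$ and $\langle \Gamma_i, k'\rangle \ge 0$, hence $\langle \Gamma_i, k'\rangle = 0$. Since $k'$ was an arbitrary element of $F^*$, this establishes $\Gamma_i \perp \spans(F^*)$. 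This is the step where the hypothesis of mixedness is essential: having rays from both $Q_1^+(\Gamma_i)$ and $Q_1^-(\Gamma_i)$ in $F$ is exactly what produces opposing inequalities whose only common solution is equality.

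The final step is to upgrade $\Gamma_i \perp \spans(F^*)$ to $\Gamma_i \in \spans(F)$, which is where I expect the only real subtlety to lie. The inclusion $\spans(F^*) \subseteq (\spans F)^\perp$ is immediate, but I need the reverse, i.e. $\spans(F^*) = (\spans F)^\perp$; then $\Gamma_i \perp (\spans F)^\perp$ gives $\Gamma_i \in \spans(F)$. This equality is precisely the dimension count $\dim F + \dim F^* = n$ coming from the order-reversing duality of the face lattice of a proper, pointed, polyhedral cone, combined with $\dim F^* = \dim \spans(F^*)$ (a cone and its span have the same dimension since $0$ lies in the cone). The main obstacle, and the point I would treat carefully, is the properness requirement behind this duality: if $K$ is not full-dimensional then $K^*$ contains $(\spans K)^\perp$ and the naive count fails, so I would carry out the argument inside the ambient space $\spans(K)$, where $K$ is proper and the face/dual-face duality holds, noting that $\Gamma_i$ (as a reaction vector relevant to the monotonicity of $K$) and all the relevant data already live in $\spans(K)$.
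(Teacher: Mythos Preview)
Your proposal is correct and follows essentially the same argument as the paper: pick extreme rays $k_+\in F\cap Q_1^+(\Gamma_i)$ and $k_-\in F\cap Q_1^-(\Gamma_i)$, observe that any $k'\in F^*$ lies in $k_+^*\cap k_-^*$, apply the two sign conditions from Lemma~\ref{lem:signs on dual faces} to force $\langle\Gamma_i,k'\rangle=0$, and then use $(\spans F^*)^\perp=\spans F$ to conclude. The paper simply asserts the final duality step, whereas you spell out why $\spans(F^*)=(\spans F)^\perp$ and flag the properness issue; this extra care is welcome but does not constitute a different approach.
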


\begin{proof}
    Suppose that $k \in Q^+_1(\Gamma_i) \cap F$ and $k' \in Q^-_1(\Gamma_i) \cap F$, where $k,k'$ are both extreme rays. If $k_2 \in F^*$ then this implies that $k_2 \in k^* \cap k'^*$. Lemma \ref{lem:signs on dual faces} then implies that for all $k_2 \in F^*$ that both $\langle \Gamma_i, k_2 \rangle \geq 0$ and $\langle \Gamma_i, k_2 \rangle \leq 0$. This implies that $\langle \Gamma_i, k_2 \rangle = 0$ for all $k_2 \in F^*$. Thus we must have $\Gamma_i$ in the space perpendicular to $F^*$, or $\Gamma_i \in \spans(F)$.
\end{proof}

\begin{lem}
\label{lem:one_reac_makes_strict_monotone}
Let $\mathcal{N}$ be a reaction network. Then:
\begin{enumerate}
    \item The network $\mathcal{N}$ is monotone with respect to a cone $K$ if and only if each individual reaction is monotone with respect to $K$.
    
    \item Suppose $\mathcal{N}$ is monotone. Then, under the same assumptions on $k_1$ and $k_2$ as in Theorem~\ref{thm:strong monotone strengthened}, we have
    \[
    \langle \mathcal{J}_f k_1, k_2 \rangle > 0
    \quad \text{if and only if} \quad
    \langle \mathcal{J}_{f^*} k_1, k_2 \rangle > 0
    \]
    for some single-reaction subsystem $f^*$ corresponding to a reaction vector $\Gamma_i$ from $\mathcal{N}$.
\end{enumerate}
\end{lem}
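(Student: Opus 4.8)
The plan is to reduce both parts to the additive decomposition of the Jacobian over the individual reactions. Writing $f(x)=\Gamma R(x)=\sum_{i=1}^m \Gamma_i R_i(x)$, the Jacobian splits reaction-by-reaction as $\mathcal{J}_f=\sum_{i=1}^m \Gamma_i\,\partial R_i$, exactly as in the computation carried out in the proof of Lemma~\ref{lem:signs on dual faces}. Consequently, for any $k_1,k_2$ one has the identity
\[
\langle \mathcal{J}_f k_1,k_2\rangle=\sum_{i=1}^m \langle \Gamma_i,k_2\rangle\,\langle \partial R_i,k_1\rangle=\sum_{i=1}^m \langle \mathcal{J}_{f_i}k_1,k_2\rangle,
\]
where $f_i=\Gamma_i R_i$ denotes the single-reaction subsystem. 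Both parts of the lemma will follow from this identity together with Walcher's criterion (Theorem~\ref{thm:monotone}).

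For the forward implication of part (1), I would assume each reaction is monotone with respect to $K$. Fixing $k_1\in\partial K$ and $k_2\in K^*$ with $\langle k_1,k_2\rangle=0$, Theorem~\ref{thm:monotone} applied to each single-reaction subsystem gives $\langle \mathcal{J}_{f_i}k_1,k_2\rangle\ge 0$ for every $i$; summing the displayed identity yields $\langle \mathcal{J}_f k_1,k_2\rangle\ge0$, so the network is monotone. For the converse I would argue by contraposition using a scaling trick. If some reaction, say reaction $1$, is not monotone, then by Theorem~\ref{thm:monotone} there exist an admissible rate $R_1$, a state $x_0$ in the relative interior of the state space, and admissible $k_1\in\partial K$, $k_2\in K^*$ with $\langle k_1,k_2\rangle=0$ and $\langle \mathcal{J}_{f_1}(x_0)k_1,k_2\rangle<0$. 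I would then fix any admissible rates $R_2,\dots,R_m$ and replace $R_1$ by $cR_1$, which remains admissible for every $c>0$. The identity gives $\langle \mathcal{J}_f(x_0)k_1,k_2\rangle=c\,\langle \mathcal{J}_{f_1}(x_0)k_1,k_2\rangle+\sum_{i\ge2}\langle \mathcal{J}_{f_i}(x_0)k_1,k_2\rangle$, whose leading coefficient is strictly negative, so for $c$ large enough the whole expression is negative. By Theorem~\ref{thm:monotone} this exhibits a violation of network monotonicity, completing the contrapositive.

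Part (2) then follows almost immediately. Assuming $\mathcal{N}$ is monotone, part (1) gives that each reaction is monotone, so each summand $\langle \mathcal{J}_{f_i}k_1,k_2\rangle$ in the identity is nonnegative whenever $k_1\in\partial K$, $k_2\in K^*$, and $\langle k_1,k_2\rangle=0$ (the hypotheses of Theorem~\ref{thm:strong monotone strengthened}). Since a finite sum of nonnegative reals is strictly positive if and only if at least one summand is strictly positive, we conclude that $\langle \mathcal{J}_f k_1,k_2\rangle>0$ holds exactly when $\langle \mathcal{J}_{f^*}k_1,k_2\rangle>0$ for some single-reaction subsystem $f^*=\Gamma_i R_i$, which is the claim.

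The main obstacle is the converse direction of part (1): isolating the contribution of a single, possibly non-monotone, reaction inside the full network. The scaling argument resolves this cleanly because the kinetic assumptions are invariant under multiplication of an individual rate by a positive constant, and because the state space depends only on $\Gamma$ rather than on the rates. The only points requiring care are that $x_0$ be chosen in the relative interior where the strict violation for reaction $1$ is realized, and that the remaining reactions' contributions at $x_0$ stay bounded while the scaled term dominates.
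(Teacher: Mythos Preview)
Your proposal is correct and follows essentially the same approach as the paper: both use the additive decomposition $\langle \mathcal{J}_f k_1,k_2\rangle=\sum_i \langle \mathcal{J}_{f_i}k_1,k_2\rangle$ together with the freedom in the kinetics to isolate a single reaction's contribution, and part~(2) is handled identically. The only cosmetic difference is in the converse of part~(1): the paper shrinks the \emph{other} reactions' rate derivatives to make the offending term dominate, whereas you scale up the offending reaction by a large constant~$c$; these are dual versions of the same idea and both are valid under the kinetic assumptions.
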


\begin{proof}
    Consider the first statement. Suppose our network $\mathcal{N}$ consists of reaction vectors $\{\Gamma_i\}_{i=1}^n$. We have that $\mathcal{J}_f k_1 = \sum_{i=1}^n \Gamma_i \partial R_i$. Clearly $\langle \mathcal{J}_f k_1, k_2 \rangle = \langle  \sum_{i=1}^n \Gamma_i \partial R_i k_1, k_2 \rangle \geq 0$ is true if and only if it is true for each individual summand, i.e., reaction. If a system consisting of one reaction vector is not monotone, we could set the other reaction rate derivatives to arbitrarily small values, to get that $\langle  \sum_{i=1}^n \Gamma_i \partial R_i k_1, k_2 \rangle < 0 $; a contradiction. If all the reactions are monotone with respect to a cone $K$, then we simply have a sum of nonnegative terms, which must be nonnegative.

    Consider the second statement. Note if $\langle  \sum_{i=1}^n \Gamma_i \partial R_i k_1, k_2 \rangle = \sum_{i=1}^n \langle   \Gamma_i \partial R_i k_1, k_2 \rangle > 0 $ then the inequality must be true for at least one of the summands (in our sum of nonnegative terms). If one of the summands is strictly positive for each of our possible choices of $k_1,k_2$, then we would have $\langle  \sum_{i=1}^n \Gamma_i \partial R_i k_1, k_2 \rangle > 0 $ (since, due to monotonicity, each summand is nonnegative) and can apply Theorem \ref{thm:strong monotone strengthened}.
\end{proof}

\begin{lem}
\label{lem:subtract_off}
    Suppose we are given a polyhedral cone $K$ and a reaction network consisting of a single reaction vector $\Gamma_1$. Suppose that the following properties hold: 
    \begin{enumerate}
        \item $k \in  Q_1(\Gamma_1) \cup Q_1(-\Gamma_1)$ for all extreme rays $k \in K$.
        \item For all extreme rays $k \in Q_1^+(\Gamma_1)$ we can find a vector $k' \in Q_1(-\Gamma_1) \cap K$ such that $\Gamma_1 = k - k'$.
        \item For all extreme rays $k \in Q_1^-(\Gamma_1)$ we can find a vector $k' \in Q_1(\Gamma_1) \cap K$ such that $\Gamma_1 = k' - k$.
    \end{enumerate} 
    
    Then our network is monotone with respect to $K$.
\end{lem}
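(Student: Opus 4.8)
The plan is to reduce the claim to the monotonicity criterion for a single reaction given in Lemma~\ref{lem:signs on dual faces}, and then verify its three conditions one at a time. Condition~(1) of that lemma is literally hypothesis~(1) here, so nothing need be done for it. It remains to establish conditions~(2) and~(3), both of which concern the sign of $\langle \Gamma_1, w\rangle$ as $w$ ranges over the dual face $k^*$ of an extreme ray $k$. The whole argument will hinge on the two facts that define $k^*$ and $K^*$: a vector $w \in k^*$ satisfies $\langle k, w\rangle = 0$, while any $w \in K^*$ pairs nonnegatively with every element of $K$.

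For condition~(2), I would take an arbitrary extreme ray $k \in Q_1^+(\Gamma_1)$ and use hypothesis~(2) to write $\Gamma_1 = k - k'$ with $k' \in K$. For any $w \in k^*$, the relation $\langle k, w\rangle = 0$ gives
\[
\langle \Gamma_1, w\rangle = \langle k, w\rangle - \langle k', w\rangle = -\langle k', w\rangle,
\]
and since $k' \in K$ and $w \in K^*$ we have $\langle k', w\rangle \ge 0$, hence $\langle \Gamma_1, w\rangle \le 0$. This is exactly condition~(2) of Lemma~\ref{lem:signs on dual faces}. Condition~(3) is symmetric: for an extreme ray $k \in Q_1^-(\Gamma_1)$, hypothesis~(3) supplies $k' \in K$ with $\Gamma_1 = k' - k$, and the identical computation yields $\langle \Gamma_1, w\rangle = \langle k', w\rangle \ge 0$ for every $w \in k^*$.

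Having verified all three conditions, Lemma~\ref{lem:signs on dual faces} immediately gives monotonicity of the single-reaction system with respect to $K$, completing the proof. I do not expect a genuine obstacle in this argument; the only points demanding care are the sign bookkeeping and the recognition that the decomposition $\Gamma_1 = \pm(k - k')$ is used purely through the containment $k' \in K$ and the annihilation $\langle k, w\rangle = 0$ on the dual face. In particular, the stated $Q_1(\mp\Gamma_1)$ membership of $k'$ is not needed for the sign deduction, so I would remark that hypotheses~(2) and~(3) are used only in the weaker form ``$k'\in K$ with the prescribed difference equal to $\Gamma_1$.''
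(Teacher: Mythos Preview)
Your proposal is correct and mirrors the paper's proof almost exactly: both verify the three conditions of Lemma~\ref{lem:signs on dual faces}, using the decomposition $\Gamma_1 = \pm(k-k')$ together with $\langle k,w\rangle=0$ on $k^*$ and $\langle k',w\rangle\ge 0$ from $k'\in K$, $w\in K^*$. Your closing remark that the $Q_1(\mp\Gamma_1)$ membership of $k'$ is not actually used in the sign computation is an accurate observation the paper does not make explicit.
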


\begin{proof}
    Take an arbitrary extreme ray $k \in Q_1^+(\Gamma_1) \cap K$, and find $k'$ such that $\Gamma_1 = k - k' $ where $k' \in Q_1(-\Gamma_1) \cap K$. Now for $k_2 \in k^*$ we have that $ \langle \Gamma_1, k_2 \rangle = \langle -k', k_2 \rangle \leq 0$ (since in general $\langle a,b \rangle \geq 0$ for $a$ in a cone and $b$ in its dual cone). A similar argument applies for $k \in Q_1^-(\Gamma_1)$. Thus our reaction vector $\Gamma_1$ satisfies the condition of Lemma \ref{lem:signs on dual faces}, and so our reaction network is monotone with respect to $K$.

\end{proof}

The following lemma says that a point in the relative interior of the convex hull of a set must have positive barycentric coordinates. This is a standard result, but we include a proof for convenience of the reader.

\begin{lem}
\label{lem:convex_sums_for_interior}
    Suppose we have a polytope $P$ with a set of vertices $V = \{k_j| 1 \leq j \leq n\}$. For each $k_i \in V$ and each $k$ in the interior of $P$ we can write $k = \sum_{j=1}^n \alpha_j k_j$ where $\sum_{j=1}^n \alpha_j = 1$, $\alpha_j \geq 0$ for all $j$, and $\alpha_i \neq 0$. 
\end{lem}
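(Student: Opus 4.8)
The plan is to exploit the relative-interior hypothesis by pushing the point $k$ slightly \emph{away} from the chosen vertex $k_i$ and then re-expressing $k$ in terms of that pushed-out point. The whole argument is elementary once the right auxiliary point is produced.

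First I would fix the vertex $k_i$ and consider the direction $d = k - k_i$. Since $k$ lies in the (relative) interior of $P$—interior taken in the subspace topology of $\operatorname{AffHul}(P)$, per our conventions—there is a relatively open ball about $k$ contained in $P$. As $k_i \in P \subseteq \operatorname{AffHul}(P)$, the direction $d$ lies in the linear space $\operatorname{AffHul}(P) - k$, so moving a small amount along $d$ keeps us inside $P$. Hence there exists $\epsilon > 0$ with
\[
k' := k + \epsilon(k - k_i) = (1+\epsilon)k - \epsilon k_i \in P.
\]

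Second, since $P = \conv(V)$, I would write $k'$ as a convex combination of the vertices, $k' = \sum_{j=1}^n \beta_j k_j$ with $\beta_j \geq 0$ and $\sum_j \beta_j = 1$. Solving the identity $k' = (1+\epsilon)k - \epsilon k_i$ for $k$ gives
\[
k = \frac{1}{1+\epsilon}\,k' + \frac{\epsilon}{1+\epsilon}\,k_i = \sum_{j \neq i} \frac{\beta_j}{1+\epsilon}\,k_j + \frac{\beta_i + \epsilon}{1+\epsilon}\,k_i.
\]
Then I would verify the three required properties directly from this expression: every coefficient is nonnegative; the coefficients sum to $\tfrac{1}{1+\epsilon}\sum_j \beta_j + \tfrac{\epsilon}{1+\epsilon} = 1$; and the coefficient on $k_i$ satisfies $\alpha_i = \tfrac{\beta_i + \epsilon}{1+\epsilon} \geq \tfrac{\epsilon}{1+\epsilon} > 0$, so in particular $\alpha_i \neq 0$, as claimed.

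The only subtle point—and hence the main (though mild) obstacle—is the first step: producing $\epsilon > 0$ with $k' \in P$. This is exactly where the relative-interior hypothesis is indispensable, since for a boundary point $k$ the push in the direction away from $k_i$ could exit $P$. It also implicitly uses $k \neq k_i$, which holds because an interior point cannot coincide with a vertex unless $P$ is a single point, a degenerate case in which the statement is trivial. Everything after securing $k'$ is routine algebra.
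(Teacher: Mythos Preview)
Your proof is correct and follows essentially the same idea as the paper's: both arguments use the ray from $k_i$ through $k$, find a point of $P$ on that ray strictly beyond $k$, and then express $k$ as a convex combination of $k_i$ and that point (writing the latter in turn as a convex combination of vertices). The only cosmetic difference is that the paper extends the ray all the way to the boundary exit point $k_e$, whereas you stop at a nearby interior point $k'$ obtained from the relative-interior ball; either choice works.
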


\begin{proof}
    Select an arbitrary vertex $k_i$ and arbitrary $k$ in the interior of $P$. Consider a line between these two points, and note that this line must leave our convex figure at precisely two points, one point being $k_i$. Label the point where the line exits the polytope (which is not $k_i$) as $k_e$. Then $k$ is a convex combination of $k_i$ and $k_e$, and $k_e$ is a convex combination of some of the vertices, and so we are done.
\end{proof}

\begin{lem}
\label{lem:subtract_off_strong_2}
Let \( \mathcal{N} \) be a set of reaction vectors, and let \( K \subset \mathbb{R}^n \) be a cone such that for each \( \Gamma_i \in \mathcal{N} \), the system \( \dot{x} = \Gamma_i R_i(x) \) satisfies the conditions of Lemma~\ref{lem:subtract_off}. Suppose further that for every proper face \( F \subset K \), there exist a reaction vector \( \Gamma_i \in \mathcal{N} \), an extreme ray \( k \in F \), and a vector \( k_i \in K \setminus F \) such that:
\begin{enumerate}
    \item \( k \notin Q_2(\Gamma_i) \),
    \item There exists \( j \in \{0,1\} \) such that \( \Gamma_i = (-1)^j(k - k_i) \).
\end{enumerate}
Then the system \( \dot{x} = \sum_i \Gamma_i R_i(x) \) is strongly monotone with respect to the cone \( K \).
\end{lem}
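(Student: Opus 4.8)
The plan is to invoke Theorem~\ref{thm:strong monotone strengthened}, whose hypotheses split into a non-strict part (the monotonicity inequality) and a strict part (existence, at every boundary point, of a witness $k_2$ giving a strictly positive pairing). First I would dispose of the non-strict part: by assumption each single-reaction system $\dot x = \Gamma_i R_i(x)$ satisfies the hypotheses of Lemma~\ref{lem:subtract_off}, hence is monotone with respect to $K$, and Lemma~\ref{lem:one_reac_makes_strict_monotone}(1) then upgrades this to monotonicity of the full network. This already gives $\langle \mathcal{J}_f k_1, k_2\rangle \ge 0$ for all admissible $(k_1,k_2)$, so only the strict condition remains. Moreover, by Lemma~\ref{lem:one_reac_makes_strict_monotone}(2) it suffices to exhibit, for each $k_1 \in \partial K$, a single reaction $\Gamma_i$ and a $k_2 \in k_1^*$ with $\langle \mathcal{J}_{f^*} k_1,k_2\rangle = \langle \Gamma_i,k_2\rangle\,\langle \partial R_i,k_1\rangle > 0$, using the single-reaction Jacobian identity from the proof of Lemma~\ref{lem:signs on dual faces}.

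Next, I would fix $k_1 \in \partial K$ and let $F$ be the minimal face of $K$ containing $k_1$; then $F$ is proper, $k_1$ lies in the relative interior of $F$, and consequently $k_1^* = F^*$. Applying the standing hypothesis to $F$ produces $\Gamma_i$, an extreme ray $k \in F$ with $k \notin Q_2(\Gamma_i)$, and a vector $k_i \in K\setminus F$ with $\Gamma_i = (-1)^j(k - k_i)$. The first key step is to show $\Gamma_i \notin \spans(F)$: since $F$ is a face of a polyhedral cone, $K \cap \spans(F) = F$, so if $\Gamma_i$ lay in $\spans(F)$ then $k_i = k - (-1)^j\Gamma_i$ would lie in $K\cap\spans(F)=F$, contradicting $k_i\notin F$. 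With $\Gamma_i\notin\spans(F)$, the contrapositive of Lemma~\ref{lem:contained in mixed faces} shows that $F$ is not a mixed face for $\Gamma_i$; combined with condition (1) of Lemma~\ref{lem:subtract_off}, every extreme ray of $F$ is then sign-aligned with $\Gamma_i$ on a single side, i.e.\ they all lie in $Q_1(\Gamma_i)\cup Q_2(\Gamma_i)$ (or all in $Q_1(-\Gamma_i)\cup Q_2(\Gamma_i)$).

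I would then extract the two nonvanishing factors. Writing $k_1$ as a strictly positive combination of the extreme rays of $F$ (Lemma~\ref{lem:convex_sums_for_interior}), all of which are sign-aligned with $\Gamma_i$ on its support, the contribution of $k$ in the coordinate where $k\notin Q_2(\Gamma_i)$ cannot be cancelled, so $k_1\notin Q_2(\Gamma_i)$ and $k_1$ lies in the same region $Q_1^+(\Gamma_i)$ or $Q_1^-(\Gamma_i)$ as $k$. On the relative interior of the state space all relevant species are present, so the strict kinetic bounds give $\langle\partial R_i,k_1\rangle\neq 0$. For the second factor, every face of a polyhedral cone is exposed, so there is $k_2\in F^*=k_1^*$ with $K\cap k_2^{\perp} = F$; since $k_i\in K\setminus F$ this forces $\langle k_i,k_2\rangle>0$, and as $\langle k,k_2\rangle=0$ we obtain $\langle\Gamma_i,k_2\rangle=(-1)^{j+1}\langle k_i,k_2\rangle\neq 0$. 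Hence $\langle\mathcal{J}_{f^*}k_1,k_2\rangle=\langle\Gamma_i,k_2\rangle\langle\partial R_i,k_1\rangle$ is nonzero; but it is also $\ge 0$ by the monotonicity established in the first paragraph, so it is strictly positive. Lemma~\ref{lem:one_reac_makes_strict_monotone}(2) transfers this to the full Jacobian, and Theorem~\ref{thm:strong monotone strengthened} then gives strong monotonicity with respect to $K$.

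I expect the most delicate step to be establishing $k_1 \notin Q_2(\Gamma_i)$, because this is exactly where the non-mixed structure of $F$ is indispensable: without it, extreme rays of $F$ lying in $Q_1(-\Gamma_i)$ could cancel the support contribution of $k$ and leave $k_1 \in Q_2(\Gamma_i)$, which would kill the factor $\langle\partial R_i,k_1\rangle$. The derivation of non-mixedness from $k_i \in K\setminus F$ via the identity $K\cap\spans(F)=F$ is therefore the load-bearing geometric input, and the one I would write out most carefully. A secondary point is that the strict kinetic inequalities hold only where the relevant species are present, so I must restrict to the relative interior of the state space and verify the reactant/product bookkeeping separately for irreversible reactions, where $Q_2$ is defined using only reactant indices.
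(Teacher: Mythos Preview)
Your proposal is correct and follows essentially the same approach as the paper's proof: both fix the minimal face $F$ containing the boundary point, invoke the hypothesis to obtain $(\Gamma_i,k,k_i)$, and then show that the two factors $\langle\Gamma_i,k_2\rangle$ and $\langle\partial R_i,k_1\rangle$ are nonzero (and of the same sign by Lemma~\ref{lem:subtract_off}), with the key geometric input being that $k_i\notin F$ forces $\Gamma_i\notin\spans(F)$. The only organizational difference is that you establish non-mixedness of $F$ upfront via the contrapositive of Lemma~\ref{lem:contained in mixed faces} and then deduce $k_1\notin Q_2(\Gamma_i)$ directly, whereas the paper argues by contradiction (assuming $\langle\partial R_i,k'\rangle=0$ and deriving mixedness); your ordering is slightly cleaner but the content is the same.
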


\begin{proof}
Let \( f = \sum_i \Gamma_i R_i(x) \) be the vector field associated with the network \( \mathcal{N} \). Fix any nonzero \( k' \in \partial K \), and let \( F \subset K \) be the minimal face such that \( k' \in \operatorname{Int}(F) \). Since \( F \) is a proper face, by assumption there exists a reaction \( \Gamma_i \in \mathcal{N} \), an extreme ray \( k \in F \), and a vector \( k_i \in K \setminus F \) such that \( k \notin Q_2(\Gamma_i) \) and \( \Gamma_i = k - k_i \) or \( \Gamma_i = k_i - k \).

We claim there exists a vector \( k_2 \in F^* \subset K^* \) such that
\[
\langle \Gamma_i, k_2 \rangle = \langle -k_i, k_2 \rangle \neq 0.
\]
To see this, suppose by contradiction that \( \langle k_2, k_i \rangle = 0 \) for all \( k_2 \in F^* \). Then \( k_i \in (F^*)^\perp \). Since \( F^* \subset K^* \), and \( (F^*)^\perp \cap K = F \), it follows that \( k_i \in F \), contradicting the assumption that \( k_i \notin F \). Hence, such a \( k_2 \in F^* \) with \( \langle \Gamma_i, k_2 \rangle \neq 0 \) must exist.

Next, since \( k' \in \operatorname{relint}(F) \), Lemma~\ref{lem:convex_sums_for_interior} implies that \( k' \) can be written as a convex combination of the extreme rays of \( F \), including \( k \). Moreover, since \( k \notin Q_2(\Gamma_i) \), we must have \( k \in Q_1^+(\Gamma_i) \) or \( k \in Q_1^-(\Gamma_i) \). Without loss of generality, assume \( k \in Q_1^+(\Gamma_i) \).

Suppose, toward a contradiction, that \( \langle \partial R_i, k' \rangle = 0 \). Since \( k' \) is a convex combination of rays in \( F \), this would imply that the positive and negative contributions to \( \langle \partial R_i, k' \rangle \) cancel out. In particular, some other extreme rays in \( F \) must lie in \( Q_1^-(\Gamma_i) \), implying that \( F \) is a mixed face for \( \Gamma_i \). By Lemma~\ref{lem:contained in mixed faces}, this would imply \( \Gamma_i \in \operatorname{span}(F) \), and since \( k = \Gamma_i + k_i \) or \( k_i = \Gamma_i + k \), it would follow that \( k_i \in F \), again contradicting \( k_i \notin F \). Therefore,
\[
\langle \partial R_i, k' \rangle \neq 0.
\]

Now compute:
\[
\langle \mathcal{J}_f k', k_2 \rangle \geq \langle \mathcal{J}_{\Gamma_i R_i} k', k_2 \rangle = \langle \Gamma_i, k_2 \rangle \langle \partial R_i, k' \rangle > 0.
\]
Here we used Lemma~\ref{lem:subtract_off}, which guarantees that the two inner product terms have the same sign, and we have shown both are nonzero.

Since this holds for all nonzero \( k' \in \partial K \), by Theorem~\ref{thm:strong monotone strengthened}, the system is strongly monotone with respect to \( K \).
\end{proof}

\section{Dynamical Conclusions on Global Convergence}

Our main result establishes weak contractivity only on the relative interior of each stoichiometric compatibility class. As such, additional arguments are needed to conclude convergence to a unique equilibrium on the full class.

A key concept in the theory of reaction networks is \textit{persistence}, which ensures that trajectories starting in the relative interior of the nonnegative orthant do not asymptotically approach the boundary. More precisely, a reaction network is said to be persistent if it has compact stoichiometric compatibility classes and
\[
\omega(x) \cap \partial \mathbb{R}_{\geq 0}^n = \emptyset \quad \text{for all } x \in \operatorname{Int}(\mathbb{R}_{\geq 0}^n).
\]

Several results in the literature provide sufficient conditions for persistence. For the remainder of this section, assume the reaction network \( \Gamma \) satisfies the hypotheses of Theorem \ref{thm:collection of theorems}.

Let \( \mathbb{R}^I \subseteq \mathbb{R}^n \) denote the coordinate subspace where species outside of \( I \) are zero. Define \( \mathbb{R}_{\geq 0}^I = \mathbb{R}_{\geq 0}^n \cap \mathbb{R}^I \).

\begin{lem}
\label{lem:nonexpansivity closed condition}
Let $\dot{x} = f(x)$ be a dynamical system with closed state space $X$, and suppose the system is nonexpansive on an open subset $U \subset X$. If the closure $\bar{U}$ is forward-invariant, then the system is nonexpansive on $\bar{U}$.
\end{lem}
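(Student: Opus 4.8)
The plan is to exploit the density of $U$ in its closure together with continuity of the flow and of the norm, transferring the (non-strict) nonexpansive inequality from interior initial conditions to arbitrary points of $\bar{U}$. First I would reduce the claim to a statement about a single elapsed time: by the evolution-operator (semigroup) property it suffices to show that for all $p,q \in \bar{U}$ and all $s \ge 0$ one has $\|\phi_s(p) - \phi_s(q)\| \le \|p - q\|$, since the general two-time inequality $\|\phi_{t_2}(p)-\phi_{t_2}(q)\| \le \|\phi_{t_1}(p)-\phi_{t_1}(q)\|$ then follows by applying this with $p,q$ replaced by $\phi_{t_1}(p),\phi_{t_1}(q)$, which again lie in $\bar{U}$ by forward-invariance of $\bar U$.

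Next, fix $p,q \in \bar{U}$ and $s \ge 0$, and choose sequences $\{p_k\},\{q_k\} \subseteq U$ with $p_k \to p$ and $q_k \to q$; such sequences exist precisely because $U$ is open and $\bar U$ is its closure. Since $f \in C^1$ (hence locally Lipschitz), the flow $\phi_t$ depends continuously on initial conditions, uniformly on the compact interval $[0,s]$, so $\phi_t(p_k) \to \phi_t(p)$ and $\phi_t(q_k) \to \phi_t(q)$. Forward-invariance of $\bar{U}$ guarantees that all of these trajectories remain in $\bar{U}$, so the distance functions $g_k(t) := \|\phi_t(p_k) - \phi_t(q_k)\|$ and $g(t) := \|\phi_t(p) - \phi_t(q)\|$ are well defined for every $t \ge 0$, and $g_k(t) \to g(t)$ by continuity of the norm. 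Applying nonexpansivity on $U$ to each pair $(p_k,q_k)$ yields $g_k(s) \le g_k(0)$, and passing to the limit $k \to \infty$ gives $g(s) \le g(0)$, as required. The decisive structural fact is that nonexpansivity is a \emph{non-strict} inequality, and non-strict inequalities are preserved under limits — a strict statement would not survive this passage.

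The main obstacle is justifying the application of nonexpansivity on $U$ to the approximating pairs when the trajectories starting at $p_k,q_k$ (or the limiting trajectory itself) meet the boundary $\partial U$. If trajectories issuing from $U$ were confined to $U$ this step would be immediate; in general one must confirm that the bound still transfers. I would address this via a first-exit-time argument: letting $\tau_k$ denote the first time at which $\phi_t(p_k)$ or $\phi_t(q_k)$ leaves $U$, the trajectories lie in $U$ throughout $[0,\tau_k)$, so $g_k$ is non-increasing there and, by continuity, $g_k(\tau_k) \le g_k(0)$ with the exiting point on $\partial U \subseteq \bar U \setminus U$ (using that $U$ is open). Combining this with forward-invariance of $\bar U$ and the closedness of the sublevel set $\{\,t \ge 0 : g(t) \le g(0)\,\}$ (which is closed because $g$ is continuous) is what lets one propagate the estimate past the boundary contacts and conclude $g(s)\le g(0)$ for the original boundary data. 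The two technical ingredients underpinning the whole argument are continuous dependence of solutions on initial data and the elementary inclusion $\bar U \setminus U \subseteq \partial U$.
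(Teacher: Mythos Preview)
Your proposal is correct and follows essentially the same route as the paper: approximate points of $\bar U$ by points of $U$, invoke nonexpansivity for the approximants, and pass to the limit via continuity of the flow---the paper merely packages this as a proof by contradiction with a single close-enough pair $x',y'$ rather than your direct sequential argument. Your third paragraph's worry about trajectories leaving $U$ is not addressed in the paper either, and under the paper's intended reading of ``nonexpansive on $U$'' (the inequality holds for all initial data in $U$, irrespective of where trajectories subsequently go) your first two paragraphs already constitute a complete proof.
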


\begin{proof}
Assume for contradiction that the system is not nonexpansive on $\bar{U}$. Then there exist \( x, y \in \bar{U} \) and \( t > 0 \) such that
\[
\| \phi_t(y) - \phi_t(x) \| > \| y - x \|.
\]
Let \( \epsilon = \| \phi_t(y) - \phi_t(x) \| - \| y - x \| > 0 \). Since \( x, y \in \bar{U} \), for any \( \delta > 0 \), we can choose \( x', y' \in U \) such that
\[
\| x' - x \| < \delta, \quad \| y' - y \| < \delta.
\]
By continuity of the flow \( \phi_t \), for any \( \eta > 0 \), we can choose \( \delta > 0 \) small enough so that
\[
\| \phi_t(x') - \phi_t(x) \| < \eta, \quad \| \phi_t(y') - \phi_t(y) \| < \eta.
\]
Now we estimate:
\[
\| \phi_t(y') - \phi_t(x') \| 
\geq \| \phi_t(y) - \phi_t(x) \| - \| \phi_t(y) - \phi_t(y') \| - \| \phi_t(x) - \phi_t(x') \| 
> \| y - x \| + \epsilon - 2\eta.
\]
Also,
\[
\| y' - x' \| \leq \| y' - y \| + \| y - x \| + \| x - x' \| < \| y - x \| + 2\delta.
\]
So if we choose \( \delta \) and \( \eta \) such that \( \epsilon > 2\delta + 2\eta \), then
\[
\| \phi_t(y') - \phi_t(x') \| > \| y' - x' \|,
\]
which contradicts nonexpansivity on \( U \). Therefore, the system must be nonexpansive on \( \bar{U} \).
\end{proof}

We will need Corollary 2 from \cite{10886667}:

\begin{cor}
\label{cor:isometry on attractor}\cite{10886667}
    Suppose we have a $C^1$ system $\dot{x} = f(x)$ with compact forward invariant state space $X$. Then for any real number $t \geq 0$ the time evolution operator $\phi_t$ is an isometry on the set ${\gattract}  = \cap_{t\geq 0 } \phi_t(X)$ (i.e., the global attractor of the system).
\end{cor}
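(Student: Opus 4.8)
The plan is to reduce the statement to the classical fact that a nonexpansive surjection of a compact metric space onto itself is an isometry, applied to the map $T := \phi_t$ restricted to $\gattract$ for each fixed $t \ge 0$. I take as the operative hypothesis---implicit from the surrounding discussion---that the system is nonexpansive with respect to the norm $\|\cdot\|$ that defines the metric; some such assumption is genuinely needed, since for a $C^1$ flow on a compact interval with two stable endpoints and one interior repeller one has $\gattract$ equal to the whole interval, yet $\phi_t$ strictly contracts pairs of points lying in the same basin and so is not an isometry.

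First I would record the structural properties of $\gattract = \bigcap_{t \ge 0}\phi_t(X)$. Since $X$ is compact and forward-invariant, the sets $\phi_t(X)$ are nonempty and compact, and they are nested, because $\phi_t(X) = \phi_s(\phi_{t-s}(X)) \subseteq \phi_s(X)$ for $t \ge s$; hence $\gattract$ is a nonempty compact set. The essential point is full invariance, $\phi_s(\gattract) = \gattract$ for every $s \ge 0$. The inclusion $\phi_s(\gattract) \subseteq \gattract$ follows from the semigroup property together with forward-invariance of $X$, while $\gattract \subseteq \phi_s(\gattract)$ is obtained by taking, for a fixed $a \in \gattract$, preimages $b_n \in \phi_n(X)$ with $\phi_s(b_n) = a$, extracting a convergent subsequence $b_{n_k} \to b$ by compactness, checking $b \in \gattract$, and using continuity of $\phi_s$ to conclude $\phi_s(b) = a$. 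In particular $T = \phi_t|_{\gattract}$ is a surjection of the compact set $\gattract$ onto itself, and nonexpansivity makes it $1$-Lipschitz.

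Next I would prove the isometry lemma for $T$ by an Arzel\`a--Ascoli argument. The iterates $\{T^n\}_{n \ge 1}$ are equicontinuous (each is $1$-Lipschitz) and take values in the compact set $\gattract$, so some subsequence $T^{n_k} \to g$ uniformly; passing to a further subsequence I may assume $m_k := n_{k+1} - n_k \to \infty$ and $T^{m_k} \to h$ uniformly. From $T^{n_{k+1}} = T^{m_k} \circ T^{n_k}$ and uniform convergence I obtain $g = h \circ g$. Each $T^{n_k}$ is onto $\gattract$, and a uniform limit of surjections onto a compact space is again surjective, so $g$ is surjective; therefore $h = \mathrm{id}_{\gattract}$, i.e.\ $T^{m_k} \to \mathrm{id}$ uniformly. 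Since $n \mapsto \|T^n x - T^n y\|$ is nonincreasing it converges to some $\ell \le \|Tx - Ty\| \le \|x - y\|$, while evaluating along $m_k$ gives $\ell = \lim_k \|T^{m_k} x - T^{m_k} y\| = \|x - y\|$; hence $\|Tx - Ty\| = \|x - y\|$. This shows $\phi_t$ is an isometry on $\gattract$, the case $t = 0$ being trivial.

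The hard part will be the combination of two steps: rigorously establishing full invariance $\phi_s(\gattract) = \gattract$, that is, the surjectivity of $T$, which is the structural feature distinguishing an attractor from a generic invariant set; and the limiting step $T^{m_k} \to \mathrm{id}$, which hinges on deducing $h = \mathrm{id}$ from $h \circ g = g$ and the surjectivity of $g$. Some care is also needed to ensure that uniform convergence passes through the composition $T^{m_k} \circ T^{n_k}$, where compactness of $\gattract$ and uniform continuity of the limit $h$ are used.
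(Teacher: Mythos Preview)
The paper does not actually prove this corollary: it is quoted verbatim as ``Corollary 2 from \cite{10886667}'' and used as a black box. So there is no paper proof to compare against. That said, your proposal is a correct self-contained argument, and your observation that nonexpansivity must be read in as an implicit hypothesis is exactly right---the statement as written is false without it, as your bistable-interval counterexample shows, and in the paper the result is only ever invoked in a context where nonexpansivity has already been established (Lemma~\ref{lem:nonexpansivity closed condition} and the weak-contractivity hypotheses of the surrounding lemmas).

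Your argument is essentially the Freudenthal--Hurewicz theorem (a nonexpansive surjection of a compact metric space onto itself is an isometry), applied to $T=\phi_t|_{\gattract}$. The two ingredients you isolate---full invariance $\phi_s(\gattract)=\gattract$ via a compactness/diagonal argument, and the Arzel\`a--Ascoli step producing $T^{m_k}\to\mathrm{id}$---are both handled correctly. One minor simplification: once you know $T$ is a nonexpansive surjection of a compact space, you can avoid Arzel\`a--Ascoli entirely by the following short argument. If $d(Tx,Ty)<d(x,y)$ for some pair, use surjectivity to build backward orbits $x_{-n},y_{-n}$ with $T^n x_{-n}=x$, $T^n y_{-n}=y$; then $d(x_{-n},y_{-n})\ge d(x,y)$ for all $n$, and any subsequential limit $(\bar x,\bar y)$ satisfies $d(\bar x,\bar y)\ge d(x,y)$ while $d(T^n\bar x,T^n\bar y)$ is nonincreasing and bounded below by $d(x,y)$, forcing $d(T\bar x,T\bar y)=d(\bar x,\bar y)$; iterating yields a contradiction with the assumed strict drop at $(x,y)$. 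Either route is fine.
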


With the same assumptions on the system, we also have the following:

\begin{lem}
\label{lem:omega limit sets in global attractor}\cite{10886667}
    Every $\omega$-limit set is contained in ${\gattract}$.
\end{lem}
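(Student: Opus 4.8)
The plan is to prove the inclusion pointwise: I would fix an arbitrary point $y$ in some $\omega$-limit set $\omega(x)$ and show directly that $y \in \phi_s(X)$ for every $s \geq 0$, which by definition of $\gattract = \cap_{s \geq 0}\phi_s(X)$ establishes $y \in \gattract$. The only tools needed are the three structural facts already guaranteed by the hypotheses of Corollary~\ref{cor:isometry on attractor}: forward invariance of $X$, compactness of $X$, and continuity of each $\phi_s$ (which holds since $f \in C^1$ makes the flow depend continuously on initial conditions). The semigroup property $\phi_{a+b} = \phi_a \circ \phi_b$ on the forward-invariant set is the algebraic engine of the argument.

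Concretely, I would proceed as follows. Since $y \in \omega(x)$, there is a sequence $t_n \to \infty$ with $\phi_{t_n}(x) \to y$. Now fix $s \geq 0$. For all $n$ large enough we have $t_n > s$, so the semigroup property gives
\[
\phi_{t_n}(x) = \phi_s\bigl(\phi_{t_n - s}(x)\bigr).
\]
Because $X$ is forward-invariant, every point $\phi_{t_n - s}(x)$ lies in $X$; because $X$ is compact, the sequence $\{\phi_{t_n - s}(x)\}$ admits a subsequence converging to some $z \in X$. Relabeling, assume $\phi_{t_n - s}(x) \to z$. Applying continuity of $\phi_s$ to the displayed identity yields
\[
y = \lim_{n} \phi_{t_n}(x) = \lim_{n} \phi_s\bigl(\phi_{t_n - s}(x)\bigr) = \phi_s(z),
\]
so that $y \in \phi_s(X)$. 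Since $s \geq 0$ was arbitrary, $y \in \cap_{s \geq 0}\phi_s(X) = \gattract$, and as $y$ was an arbitrary point of an arbitrary $\omega$-limit set, the claim follows.

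I do not expect a serious obstacle here; the result is a standard fact about global attractors, and the only point requiring care is the extraction of the convergent subsequence, which is exactly where compactness and forward invariance of $X$ are essential—without them the pullback points $\phi_{t_n - s}(x)$ might escape to a region where no limit or continuity argument is available. A secondary bookkeeping subtlety is that passing to a subsequence is allowed because we only need to exhibit \emph{one} preimage $z \in X$ of $y$ under $\phi_s$, not convergence of the full sequence. I would state the argument for a single $s$ and note that $s$ ranges over all nonnegative reals, so no uniformity across $s$ is needed.
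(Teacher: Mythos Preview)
Your argument is correct and is the standard proof of this fact. Note, however, that the paper does not actually prove this lemma: it is stated with a citation to \cite{10886667} and no proof is given in the paper itself, so there is no ``paper's own proof'' to compare against. Your self-contained argument would serve perfectly well as a proof if one were desired.
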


\begin{lem}
\label{lem:unique_equilibrium_relative_interior}
If a stoichiometric compatibility class contains an equilibrium in its relative interior, then this equilibrium is unique, and all trajectories—including those on the boundary—converge to it.
\end{lem}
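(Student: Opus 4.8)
The plan is to identify the interior equilibrium $x^*$ with the \emph{entire} global attractor $\gattract = \bigcap_{t\ge 0}\phi_t(C)$ of the (compact, forward-invariant) stoichiometric compatibility class $C$, and then read off both uniqueness and global convergence from Lemma~\ref{lem:omega limit sets in global attractor}. Since $\phi_t(x^*)=x^*$, the equilibrium lies in every forward image, so $x^*\in\gattract$. By Theorem~\ref{thm:collection of theorems} the flow is weakly contractive, in particular nonexpansive, on $\operatorname{relint}(C)$ with respect to some norm $\|\cdot\|$; applying Lemma~\ref{lem:nonexpansivity closed condition} with $U=\operatorname{relint}(C)$ and $\bar U = C$ (forward-invariant), the flow is nonexpansive on all of $C$ in this same norm. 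Finally, Corollary~\ref{cor:isometry on attractor} makes $\phi_t$ an isometry on $\gattract$, so for every $p\in\gattract$ the quantity $L_p:=\|p-x^*\|$ satisfies $\|\phi_t(p)-x^*\|=L_p$ for all $t\ge 0$.

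The crux is to show $\gattract=\{x^*\}$. Fix $p\in\gattract$ and, using that $x^*$ lies in the relative interior, choose $\lambda\in(0,1)$ small enough that $q_\lambda:=(1-\lambda)x^*+\lambda p\in\operatorname{relint}(C)$. Two applications of nonexpansivity on $C$ give $\|\phi_t(q_\lambda)-x^*\|\le\lambda L_p$ (against the fixed point $x^*$) and $\|\phi_t(q_\lambda)-\phi_t(p)\|\le(1-\lambda)L_p$ (against $p$), while the triangle inequality together with the isometry yields $L_p=\|\phi_t(p)-x^*\|\le\|\phi_t(p)-\phi_t(q_\lambda)\|+\|\phi_t(q_\lambda)-x^*\|\le L_p$. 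Hence every inequality is an equality and $\|\phi_t(q_\lambda)-x^*\|=\lambda L_p$ for all $t$. But $q_\lambda$ and $x^*$ are two trajectories in $\operatorname{relint}(C)$, so weak contractivity forces $\|\phi_t(q_\lambda)-x^*\|$ to strictly decrease whenever $q_\lambda\ne x^*$; a constant value $\lambda L_p$ is possible only if $\lambda L_p=0$, i.e.\ $L_p=0$ and $p=x^*$.

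With $\gattract=\{x^*\}$ established, both conclusions follow quickly. Any equilibrium of $C$ is a fixed point lying in every $\phi_t(C)$, hence in $\gattract$, and therefore equals $x^*$: the equilibrium is unique. For global convergence, take any $x_0\in C$ (interior or boundary); by compactness $\omega(x_0)$ is nonempty, and by Lemma~\ref{lem:omega limit sets in global attractor} it is contained in $\gattract=\{x^*\}$, so $\phi_t(x_0)\to x^*$. This handles the boundary trajectories without any separate argument.

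I expect the main obstacle to be the tension, exploited in the middle paragraph, between Corollary~\ref{cor:isometry on attractor} (distances are \emph{frozen} on $\gattract$) and strict weak contractivity (distances must \emph{shrink} in $\operatorname{relint}(C)$); the convex-combination point $q_\lambda$ is the device that transfers a frozen distance on $\gattract$ into the relative interior, where it is forbidden. Care is needed to ensure the trajectory of $q_\lambda$ remains in $\operatorname{relint}(C)$ long enough for strict contractivity to bite (this holds by forward-invariance of the relative interior under the kinetic assumptions, or locally by continuity since $\operatorname{relint}(C)$ is relatively open), and to keep a single norm consistent across the weak-contractivity, nonexpansivity, and isometry statements. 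Compactness of $C$, required to invoke the attractor results, is part of the standing persistence assumptions of this section.
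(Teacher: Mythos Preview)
Your argument is correct and follows essentially the same route as the paper: use a convex interpolant between the interior equilibrium and another point to transfer the strict contraction available in $\operatorname{relint}(C)$, combine with nonexpansivity on the closure (Lemma~\ref{lem:nonexpansivity closed condition}) and the isometry on the global attractor (Corollary~\ref{cor:isometry on attractor}), and conclude $\gattract=\{x^*\}$ so that Lemma~\ref{lem:omega limit sets in global attractor} finishes. The only noteworthy refinement in the paper is that it applies Corollary~\ref{cor:isometry on attractor} to the closed ball $B_d=\{y\in C:\|y-x^*\|\le d\}$, which is automatically compact and forward-invariant by nonexpansivity, rather than to $C$ itself; this avoids your assumption that $C$ is compact (which is \emph{not} among the standing hypotheses of the section, contrary to your final remark).
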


\begin{proof}
Let \( p \) be an equilibrium in the relative interior. For any boundary point \( b \), write:
\[
\|b - p\| = \|b - x\| + \|x - p\|,
\]
where \( x = \lambda b + (1-\lambda) p \), for some $0 < \lambda < 1$, is a point in the relative interior. Since weak contractivity ensures \( \|x - p\| \) is strictly decreasing and $\|b-x\|$ is nonincreasing by Lemma \ref{lem:nonexpansivity closed condition}, it follows that \( \|b - p\| \) must also be strictly decreasing.

Now let \( x \) be an arbitrary point in the stoichiometric compatibility class \( S \), and define \( d = \|x - p\| \). Consider the compact, forward-invariant set
\[
B_d = \{ y \in S \mid \|y - p\| \leq d \}.
\]
By Corollary \ref{cor:isometry on attractor}, the flow \( \phi_t \) acts as an isometry on the global attractor \( \mathcal{A} \). However, we have already observed that all boundary points strictly contract toward \( p \), and weak contractivity holds in the relative interior. Thus, for any \( x \neq p \), the distance \( \| \phi_t(x) - p \| \) must decrease over time, contradicting invariance under isometries unless \( x = p \). Therefore, \( \mathcal{A} = \{p\} \), and by Lemma \ref{lem:omega limit sets in global attractor}, all omega-limit sets lie in \( \mathcal{A} \). Hence, every trajectory converges to \( p \).
\end{proof}

\begin{lem}
\label{lem:equilibria_in_boundary}
If a stoichiometric compatibility class has an equilibrium in \( \mathbb{R}^I \) with \( |I| = n-1 \), then it cannot have equilibria outside \( \mathbb{R}^I \). Moreover, all omega-limit sets of trajectories must lie in \( \mathbb{R}^I \).
\end{lem}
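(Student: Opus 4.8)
The plan is to prove both assertions simultaneously by locating the global attractor of the compatibility class inside $\mathbb{R}^I$. The statement about omega-limit sets is then immediate from Lemma~\ref{lem:omega limit sets in global attractor}, and the nonexistence of equilibria outside $\mathbb{R}^I$ follows for free: an equilibrium $q$ is its own omega-limit set, so $\omega(q)=\{q\}\subseteq\mathbb{R}^I$ forces $(q)_k=0$, where $k$ is the unique species with $k\notin I$ (so that $\mathbb{R}^I=\{x\mid (x)_k=0\}$ is a coordinate hyperplane). Throughout I work in a compact stoichiometric compatibility class $S$ (guaranteed by the persistence/compactness hypotheses accompanying Theorem~\ref{thm:collection of theorems}), so that the global attractor $\mathcal{A}=\bigcap_{t\geq0}\phi_t(S)$ exists, $\phi_t$ is an isometry on $\mathcal{A}$ in the weak-contractivity norm (Corollary~\ref{cor:isometry on attractor}), every omega-limit set lies in $\mathcal{A}$ (Lemma~\ref{lem:omega limit sets in global attractor}), and the given equilibrium $p$ (with $(p)_k=0$) lies in $\mathcal{A}$.

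First I would show $\mathcal{A}\subseteq\partial S$ by establishing strict contraction toward $p$ from the interior. For $x\in\operatorname{relint}(S)$ set $c=\tfrac12(x+p)$, which again lies in $\operatorname{relint}(S)$ since $x$ is relatively interior and $p\in S$. Weak contractivity applied to the interior pair $(x,c)$ gives $\|\phi_t x-\phi_t c\|<\|x-c\|$ for $t>0$, while nonexpansivity on the closure (Lemma~\ref{lem:nonexpansivity closed condition}) applied to $(c,p)$ gives $\|\phi_t c-p\|\leq\|c-p\|$; since $x,c,p$ are collinear the triangle inequality yields $\|\phi_t x-p\|<\|x-p\|$ for all $t>0$. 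Consequently no point of $\operatorname{relint}(S)$ can lie in $\mathcal{A}$, since such a point would have to keep a constant positive distance to $p$ under the isometry, contradicting the strict decrease. Hence $\mathcal{A}\subseteq\partial S$, and for interior initial data the omega-limit set avoids $\operatorname{relint}(S)$.

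The main obstacle is upgrading $\mathcal{A}\subseteq\partial S$ to the sharper $\mathcal{A}\subseteq\mathbb{R}^I$, that is, pinning the limiting behaviour to the specific coordinate $x_k=0$ rather than to an arbitrary facet of the orthant. My plan is a support/reduction argument combined with induction on $n$. Using the standard forward-invariance of positivity for these kinetics (a species with positive concentration stays positive), the support of a trajectory is nondecreasing in forward time; since $\phi_t$ is an isometry of the compact set $\mathcal{A}$, the flow on $\mathcal{A}$ is recurrent in both time directions, which forces the support to be constant along every trajectory in $\mathcal{A}$. Thus each attractor trajectory has some species $s$ with $x_s\equiv0$, and the sign conditions in the kinetic assumptions then force every reaction touching $s$ to have zero rate along that trajectory, so the trajectory in fact solves the reduced reaction network obtained by deleting species $s$. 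This reduced network still has stoichiometric matrix of the form $PND$ with the required structure (deleting a row preserves membership in $\mathcal{P}$ and leaves $N,D$ essentially intact) and a strongly connected $\mathcal{RI}$-graph, so Theorem~\ref{thm:collection of theorems} applies and it is again weakly contractive. Inducting on the number of species confines the recurrent dynamics to ever-smaller coordinate subspaces; the genuinely delicate step—which I expect to be the crux—is the bookkeeping that identifies the terminal coordinate subspace with $\mathbb{R}^I$, where the codimension-one hypothesis $|I|=n-1$ together with the presence of the boundary equilibrium $p$ on $\mathbb{R}^I$ must be used to exclude stabilization on any facet other than $\{x_k=0\}$.
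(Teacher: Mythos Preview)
Your first step---the midpoint/segment argument showing that $\|\phi_t x - p\|$ strictly decreases for interior $x$, hence $\mathcal{A}\cap\operatorname{relint}(S)=\emptyset$---is correct and is exactly the device the paper uses.

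The second step is where the proposal breaks down. The induction-on-species programme is both unnecessary and unsound as stated: deleting a species row from $\Gamma=PND$ can create a zero column in $P$ or disconnect the $\mathcal{RI}$-graph, so the reduced network need not satisfy the hypotheses of Theorem~\ref{thm:collection of theorems}; and you yourself flag that the crucial step singling out the facet $\{x_k=0\}$ is not actually carried out. Nothing in your induction uses the location of $p$ or the hypothesis $|I|=n-1$, so there is no mechanism to prefer $\mathbb{R}^I$ over any other boundary stratum. The paper instead simply reuses your segment argument on the point $w$ itself: for any $w\in S\setminus\mathbb{R}^I$ one has $(w)_k>0$ while $(p)_k=0$, and the segment from $p$ to $w$ meets $\operatorname{relint}(S)$. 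Choosing interior points $x,x'$ on that segment with $\|p-w\|=\|p-x\|+\|x-x'\|+\|x'-w\|$, weak contractivity on the pair $(x,x')$ together with nonexpansivity on the two end legs gives $\|p-\phi_t w\|<\|p-w\|$ for $t>0$. This immediately excludes a second equilibrium outside $\mathbb{R}^I$, and combined with the isometry of $\phi_t$ on $\mathcal{A}$ excludes any attractor point---hence any $\omega$-limit point---outside $\mathbb{R}^I$. No reduction to smaller networks is needed.
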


\begin{proof}
Assume, for contradiction, that the stoichiometric class contains two equilibria: one at \( p \in \mathbb{R}^I \) and another at \( p' \notin \mathbb{R}^I \). The line segment connecting \( p \) and \( p' \) must pass through the relative interior of the class. Thus, there exist points \( x, x' \) in the relative interior such that
\[
\|p - p'\| = \|p - x\| + \|x - x'\| + \|x' - p'\|.
\]
Since the system is weakly contractive on the relative interior, the norm \( \|x - x'\| \) must be strictly decreasing under the flow. Therefore, \( \|p - p'\| \) must also decrease, contradicting the assumption that both \( p \) and \( p' \) are equilibria (and hence fixed points under the flow).

Now suppose a trajectory starting in the relative interior has an omega-limit point \( w \notin \mathbb{R}^I \). Then, by the same reasoning, the line segment between \( p \) and \( w \) must pass through the relative interior. Weak contractivity again implies that \( \|p - w\| \) is strictly decreasing over time. However, by Corollary \ref{cor:isometry on attractor} and Lemma \ref{lem:omega limit sets in global attractor}, the distance between any two omega-limit points must remain constant. This contradiction implies \( w \in \mathbb{R}^I \).
\end{proof}

\begin{lem}
\label{lem:persistence_implies_convergence}
If the network is persistent, then every trajectory converges to a unique equilibrium in the relative interior of its stoichiometric compatibility class.
\end{lem}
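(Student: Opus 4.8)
The plan is to manufacture an equilibrium in the \emph{relative interior} of a stoichiometric compatibility class and then hand off to Lemma~\ref{lem:unique_equilibrium_relative_interior}, which supplies both uniqueness and global convergence for free. Fix a compatibility class $S$ and a point $x$ in its relative interior. Persistence guarantees that $S$ is compact and forward-invariant, so the trajectory through $x$ has a nonempty $\omega$-limit set $\omega(x) \subseteq S$.

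The first substantive step is to pin down where $\omega(x)$ lives. Persistence gives $\omega(x) \cap \partial \mathbb{R}^n_{\geq 0} = \emptyset$, so $\omega(x)$ sits entirely inside the relative interior of $S$, which (under the standing assumption that $\Gamma$ satisfies Theorem~\ref{thm:collection of theorems}) is exactly the region on which the system is weakly contractive. Simultaneously, Lemma~\ref{lem:omega limit sets in global attractor} places $\omega(x)$ inside the global attractor $\mathbb{A} = \bigcap_{t\geq 0}\phi_t(S)$, and Corollary~\ref{cor:isometry on attractor} tells us that each $\phi_t$ acts as an isometry on $\mathbb{A}$.

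The heart of the argument is the collision between these two facts. Suppose $\omega(x)$ contained two distinct points $p, q$. Since both lie in $\mathbb{A}$, the isometry property yields $\|\phi_t(p) - \phi_t(q)\| = \|p - q\|$ for all $t \geq 0$; but since both also lie in the relative interior of $S$, weak contractivity forces $\|\phi_t(p) - \phi_t(q)\| < \|p - q\|$ for every $t > 0$, a contradiction. Hence $\omega(x) = \{p\}$ is a singleton. Because $\omega$-limit sets are flow-invariant, $\phi_t(p) = p$ for all $t$, so $p$ is an equilibrium lying in the relative interior of $S$. Invoking Lemma~\ref{lem:unique_equilibrium_relative_interior}, this $p$ is the unique equilibrium of the class and every trajectory, including those on the boundary, converges to it.

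The step I expect to be the crux is the domain bookkeeping in the third paragraph: one must ensure the two candidate points of $\omega(x)$ lie \emph{simultaneously} in $\mathbb{A}$, where the isometry holds, and in the relative interior, where weak contractivity holds. This simultaneity is precisely what persistence buys us. Without it, $\omega(x)$ could accumulate on $\partial \mathbb{R}^n_{\geq 0}$, where we only control the dynamics through the weaker nonexpansive estimate of Lemma~\ref{lem:nonexpansivity closed condition}, and the strict-versus-constant contradiction driving the singleton conclusion would evaporate.
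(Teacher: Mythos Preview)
Your proof is correct, but it follows a genuinely different route from the paper's. The paper argues indirectly: persistence plus Lemma~\ref{lem:equilibria_in_boundary} (used contrapositively) rules out boundary equilibria; compactness plus Yorke's fixed point theorem (Lemma~\ref{lem:fixed_point_compact}) furnishes at least one equilibrium somewhere in the class; combining these forces the equilibrium into the relative interior, after which Lemma~\ref{lem:unique_equilibrium_relative_interior} finishes the job. You instead manufacture the interior equilibrium \emph{directly} from the dynamics: persistence traps $\omega(x)$ in the relative interior, Lemma~\ref{lem:omega limit sets in global attractor} and Corollary~\ref{cor:isometry on attractor} make the flow an isometry on $\omega(x)$, and the clash with weak contractivity collapses $\omega(x)$ to a single fixed point. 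Your approach bypasses both Lemma~\ref{lem:equilibria_in_boundary} and the topological fixed point argument, at the cost of reusing the isometry/contractivity collision that the paper only deploys inside the proof of Lemma~\ref{lem:unique_equilibrium_relative_interior}. The paper's version is more modular (existence and location are established separately), while yours is more self-contained and arguably shows more clearly why persistence is exactly the hypothesis needed.
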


\begin{proof}
Persistence implies that omega-limit sets do not intersect the boundary of the nonnegative orthant. By Lemma~\ref{lem:equilibria_in_boundary}, there are no equilibria on the boundary. Since persistence also guarantees that stoichiometric compatibility classes are compact, each class contains at least one equilibrium. Furthermore, weak contractivity ensures that there is at most one equilibrium in the relative interior of each stoichiometric class. Hence, there exists a unique equilibrium in the relative interior, and every trajectory must converge to it.
\end{proof}

\begin{lem}
\label{lem:reversible_boundary_invariant}
If a reversible reaction network has an equilibrium in the relative interior of \( \mathbb{R}^I \cap \mathbb{R}_{\geq 0}^n \), then \( \mathbb{R}^I \cap \mathbb{R}_{\geq 0}^n \) is forward-invariant.
\end{lem}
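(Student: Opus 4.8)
The plan is to show that the complement species $Z := \{1,\dots,n\}\setminus I$ form a \emph{balanced} set: every reaction that touches $Z$ has both a reactant and a product in $Z$. Once this structural fact is in hand, such reactions have identically zero rate on the face $F := \mathbb{R}^I \cap \mathbb{R}^n_{\geq 0}$ (on $F$ they simultaneously have an absent reactant and an absent product), and forward-invariance of $F$ follows quickly. Write $p$ for the equilibrium, so $(p)_i>0$ for $i\in I$ and $(p)_i=0$ for $i\in Z$; since we are in the non-catalytic setting, ``reactant'' and ``product'' are unambiguous and a reaction touches $Z$ exactly when some $\Gamma_{ij}\neq 0$ with $i\in Z$.

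The first step is a strict-monotonicity argument that rules out ``one-sided'' reactions. Suppose a reaction $j$ has a product $i_0\in Z$ but all of its reactants in $I$. Choose any reactant $k$ of $j$ and set $p' = p-\delta e_k$ with $\delta>0$ small, so that all reactants of $j$ remain present along the segment from $p'$ to $p$. Kinetic condition (4) then gives $\partial R_j/\partial x_k>0$ along this segment, whence $R_j(p') < R_j(p)$; but the product $i_0$ is still absent at $p'$, so condition (2) gives $R_j(p')\geq 0$, and therefore $R_j(p)>0$. The symmetric argument—using reversibility together with the strict product-side conditions—shows that a reaction with a reactant in $Z$ but all products in $I$ satisfies $R_j(p)<0$. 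This is the step where both the strict partial-derivative hypotheses and the reversibility assumption are genuinely needed, so I expect it to be the main obstacle.

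The second step combines this with the equilibrium identity. Summing $\dot x_i(p)=0$ over $i\in Z$ yields $\sum_j s_j R_j(p)=0$, where $s_j := \sum_{i\in Z}\Gamma_{ij}$. A products-only reaction has $s_j>0$ and, by the first step, $R_j(p)>0$; a reactants-only reaction has $s_j<0$ and $R_j(p)<0$; each such reaction contributes a strictly positive term. Reactions not touching $Z$ contribute $s_j=0$, and reactions meeting $Z$ on both sides contribute $R_j(p)=0$ (an absent reactant forces $R_j(p)\leq 0$ and an absent product forces $R_j(p)\geq 0$). Since the total vanishes and no term is negative, there can be no one-sided reactions at all; hence every reaction touching $Z$ has both a reactant and a product in $Z$.

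Finally, for any such reaction and any $x\in F$, the absent reactant forces $R_j(x)\leq 0$ and the absent product forces $R_j(x)\geq 0$, so $R_j\equiv 0$ on $F$. Consequently, for each $i\in Z$ every reaction $j$ with $\Gamma_{ij}\neq 0$ touches $Z$ and thus has zero rate on $F$, giving $\dot x_i=\sum_j \Gamma_{ij}R_j = 0$ on $F$. Thus the $Z$-coordinates stay at zero while the standard forward-invariance of $\mathbb{R}^n_{\geq 0}$ keeps all coordinates nonnegative; by uniqueness of solutions the trajectory remains in $F$, which is the claimed invariance.
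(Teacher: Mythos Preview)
Your proof is correct and follows essentially the same strategy as the paper's: use the equilibrium condition together with the kinetic sign constraints to show that every reaction touching the complement $Z$ must have both a reactant and a product in $Z$, then conclude that such reactions have identically zero rate on the face. The only organizational difference is that the paper works one species $k\in Z$ at a time (observing that $\dot x_k(p)=\sum_j\Gamma_{kj}R_j(p)=0$ is a sum of nonnegative terms, forcing each $R_j(p)=0$), whereas you aggregate over all of $Z$; your strict-monotonicity argument for $R_j(p)>0$ is also spelled out more carefully than the paper's terse appeal to condition~(4).
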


\begin{proof}
Let \( p \in \operatorname{Int}(\mathbb{R}^I \cap \mathbb{R}_{\geq 0}^n) \) be an equilibrium, so \( \dot{x} = \Gamma R(x) = 0 \) at \( x = p \). For any index \( k \notin I \), we have \( (p)_k = 0 \) and hence \( (\dot{x})_k = 0 \).

Now consider a reversible reaction \( R_j \) for which species \( k \notin I \) appears as a product. By the kinetic assumptions, \( R_j(p) \geq 0 \). Since \( p \) is an equilibrium, \( R_j(p) = 0 \). If all reactants of \( R_j \) were present at \( p \), then by strict positivity of partial derivatives (assumption 4 of the kinetics), we would have \( R_j(p) > 0 \), a contradiction. Therefore, not all reactants of \( R_j \) are present at \( p \), even though all species in \( I \) are.

It follows that for every \( x \in \mathbb{R}^I \cap \mathbb{R}_{\geq 0}^n \), at least one reactant of \( R_j \) is absent, and hence \( R_j(x) = 0 \). This argument applies to all reactions producing any species \( k \notin I \). Therefore, no species outside \( I \) can be produced from within \( \mathbb{R}^I \cap \mathbb{R}_{\geq 0}^n \), and the dynamics remain confined to this subspace.

Hence, \( \mathbb{R}^I \cap \mathbb{R}_{\geq 0}^n \) is forward-invariant.
\end{proof}

\begin{lem}
\label{lem:reversible_unique_equilibrium}
If the network is reversible, then all trajectories converge to at most one equilibrium, which may lie on the boundary.
\end{lem}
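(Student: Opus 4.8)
The plan is to induct on the number of species (equivalently, on the dimension of the stoichiometric compatibility class), descending onto lower-dimensional invariant faces via Lemma~\ref{lem:reversible_boundary_invariant} and closing the argument with the global-attractor machinery. Fix a stoichiometric compatibility class $C$; it is a closed convex subset of $\mathbb{R}^n_{\geq 0}$, and under the standing assumptions it is compact and forward-invariant, so it carries a global attractor $\gattract = \cap_{t\geq 0}\phi_t(C)$ on which $\phi_t$ acts as an isometry (Corollary~\ref{cor:isometry on attractor}), and every $\omega$-limit set lies in $\gattract$ (Lemma~\ref{lem:omega limit sets in global attractor}). Since a compact convex forward-invariant set carries at least one equilibrium, $C$ contains one; I aim to show it contains at most one and that every trajectory converges to it.

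First dispatch the easy case. If some equilibrium $p$ lies in $\operatorname{relint}(C)$, then Lemma~\ref{lem:unique_equilibrium_relative_interior} immediately yields uniqueness and global convergence, which also serves as the base of the induction. Otherwise every equilibrium lies on $\partial C$. Choose such an equilibrium $p$, set $I=\operatorname{supp}(p)$, and note that $p$ lies in the relative interior of the face $F:=C\cap\mathbb{R}^I$. Because the network is reversible, Lemma~\ref{lem:reversible_boundary_invariant} shows $\mathbb{R}^I\cap\mathbb{R}^n_{\geq 0}$ is forward-invariant, so $F$ is a forward-invariant stoichiometric compatibility class of the subnetwork obtained by deleting the species outside $I$ and the reactions that vanish identically on $F$. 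I would then check that this subnetwork again has a stoichiometric matrix of the form $PND$ with the $\mathcal{RI}$-graph strongly connected, so that Theorem~\ref{thm:collection of theorems} applies and the restricted system is weakly contractive; since $\dim F<\dim C$, the inductive hypothesis gives that $p$ is the unique equilibrium of $F$ and that every trajectory of the restricted system converges to $p$.

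To lift this back to the full class $C$, I would use the attractor. For any $z\in\gattract$ with $z\neq p$, the isometry property together with $\dot p=0$ forces $\|\phi_t(z)-p\|=\|z-p\|=:\rho>0$ for all $t$; but any $z$ with $\operatorname{supp}(z)\subseteq I$ lies in $F$ and hence satisfies $\phi_t(z)\to p$ by the inductive step, a contradiction. So every attractor point other than $p$ must activate a species outside $I$, and this I would rule out by the same contraction/segment mechanism used in Lemmas~\ref{lem:unique_equilibrium_relative_interior} and \ref{lem:equilibria_in_boundary}: decompose the distance to $p$ along a segment entering a strictly larger invariant face and combine weak contractivity with the nonexpansivity extension of Lemma~\ref{lem:nonexpansivity closed condition}. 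This forces $\gattract=\{p\}$, whence all $\omega$-limit sets equal $\{p\}$ and every trajectory of $C$ converges to $p$; uniqueness across distinct boundary equilibria $p\neq p'$ follows identically, since either the open segment $(p,p')$ meets $\operatorname{relint}(C)$ — where $\|p-p'\|=\|p-x\|+\|x-x'\|+\|x'-p'\|$ with $x,x'$ interior gives strict decrease, contradicting that both are fixed — or $p,p'$ share a common proper invariant face, handled by induction.

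The step I expect to be the main obstacle is the face bookkeeping in the inductive reduction. Two points must be made airtight: first, that the restricted subnetwork on $\mathbb{R}^I$ still admits the factorization and strong-connectivity hypotheses of Theorem~\ref{thm:collection of theorems} after rows and columns are deleted (so that weak contractivity is genuinely available on $F$ and the induction closes, rather than merely inheriting a norm built for the full-dimensional class); and second, that an attractor point with support outside $I$ can always be joined to $p$ through the relative interior of some invariant face, since the straight segment between two boundary points may itself remain in the boundary, so the contraction argument must be routed through the correct minimal invariant face rather than through $\operatorname{relint}(C)$ directly.
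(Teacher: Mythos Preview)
Your approach matches the paper's: handle the interior case via Lemma~\ref{lem:unique_equilibrium_relative_interior}, then induct on the dimension of the boundary face containing the equilibrium, using Lemma~\ref{lem:reversible_boundary_invariant} for invariance and re-invoking Theorem~\ref{thm:collection of theorems} on the restricted subnetwork. The paper's proof is considerably terser---it does not deploy the attractor machinery explicitly for the boundary case and simply asserts that ``since the network is reversible, subsets of reactions still satisfy Theorem~\ref{thm:collection of theorems}''---so the very step you flag as the main obstacle (checking that the $PND$ factorization and $\mathcal{RI}$-graph strong connectivity survive restriction) is glossed over there as well; your caution is well placed but is not a departure from the paper's argument.
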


\begin{proof}
If an equilibrium exists in the relative interior, uniqueness follows by Lemma \ref{lem:unique_equilibrium_relative_interior}. Otherwise, suppose there is an equilibrium in the relative interior of \( \mathbb{R}^I \). By Lemma \ref{lem:reversible_boundary_invariant}, the system remains invariant on \( \mathbb{R}^I \).

Since the network is reversible, subsets of reactions still satisfy Theorem \ref{thm:collection of theorems}, ensuring weak contractivity. Suppose there were two equilibria in \( \mathbb{R}^I \) not contained in some \( \mathbb{R}^{I'} \) where \( |I'| = |I| - 1 \). Then, by weak contractivity, they must move toward each other—a contradiction. 

By induction, considering decreasing-dimensional boundary subsets, we conclude that at most one equilibrium exists.
\end{proof}

The following result, known as Yorke’s Fixed Point Theorem (Theorem 1.16 in \cite{FB-CTDS}), ensures the existence of equilibria.

\begin{lem}
\label{lem:fixed_point_compact}
A system with a convex, compact, and forward-invariant set has at least one equilibrium.
\end{lem}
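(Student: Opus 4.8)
The plan is to combine Brouwer's fixed point theorem with a limiting argument that converts periodic points of vanishingly short period into an equilibrium. Let $C$ denote the convex, compact, forward-invariant set. For each fixed $t > 0$, the time-$t$ flow map $\phi_t : C \to C$ is well defined (since $C$ is forward-invariant) and continuous (by the continuous dependence of solutions on initial conditions recorded in Section~\ref{sec:background}). Because $C$ is a convex compact subset of $\mathbb{R}^n$, it has the Brouwer fixed point property, so $\phi_t$ admits a fixed point $x_t \in C$.

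Next I would choose a sequence $t_k \to 0^+$ and, for each $k$, a fixed point $x_k \in C$ of $\phi_{t_k}$, so that $\phi_{t_k}(x_k) = x_k$. By compactness of $C$, I pass to a subsequence (not relabeled) with $x_k \to x^* \in C$. The claim is that $x^*$ is an equilibrium, that is, $f(x^*) = 0$.

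To establish this I would use the integral form of the flow. Since $\phi_{t_k}(x_k) = x_k$, the fundamental theorem of calculus gives
\[
0 = \phi_{t_k}(x_k) - x_k = \int_0^{t_k} f\bigl(\phi_s(x_k)\bigr)\, ds ,
\]
and dividing by $t_k$ yields $\tfrac{1}{t_k}\int_0^{t_k} f\bigl(\phi_s(x_k)\bigr)\, ds = 0$. As $k \to \infty$ we have $t_k \to 0$ and $x_k \to x^*$; since the flow is jointly continuous in $(s,x)$ and $f$ is continuous, the values $\phi_s(x_k)$ for $s \in [0,t_k]$ all converge to $x^*$, so the integrand converges uniformly to $f(x^*)$ on the shrinking interval of integration. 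Hence the time-average on the left converges to $f(x^*)$, forcing $f(x^*) = 0$.

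The main obstacle is precisely this final limiting step: one must argue that the average $\tfrac{1}{t_k}\int_0^{t_k} f(\phi_s(x_k))\, ds$ genuinely converges to $f(x^*)$, rather than merely remaining bounded. This follows from the local uniform continuity of $f$ on a compact neighborhood of $x^*$, combined with the estimate $\sup_{0 \le s \le t_k}\|\phi_s(x_k) - x^*\| \to 0$; the latter uses joint (hence uniform) continuity of the flow on the compact set $[0,t_1]\times C$ to control $\|\phi_s(x_k)-\phi_s(x^*)\|$, together with continuity in time and $t_k \to 0$ to control $\|\phi_s(x^*)-x^*\|$. Everything else—the application of Brouwer's theorem and the extraction of a convergent subsequence—is routine, so the substance of the proof is concentrated in making this averaging argument precise.
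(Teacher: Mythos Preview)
Your argument is correct: applying Brouwer to each time-$t$ map, extracting a convergent subsequence of fixed points as $t_k\to 0$, and using the integral mean value estimate to force $f(x^*)=0$ is the standard proof of this result (often attributed to Yorke). The only step requiring care is the one you flagged, and your treatment of it is sound: joint continuity of the flow on the compact set $[0,t_1]\times C$ gives the uniform estimate $\sup_{0\le s\le t_k}\|\phi_s(x_k)-x^*\|\to 0$, and continuity of $f$ on the compact set $C$ then pushes the averaged integral to $f(x^*)$.

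The paper does not supply its own proof of this lemma; it simply records it as Yorke's Fixed Point Theorem and cites Theorem~1.16 of \cite{FB-CTDS}. Your proposal therefore goes beyond what the paper does by actually carrying out the argument, and the route you take is exactly the one found in standard references for this result.
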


\begin{cor}
\label{cor:global fixed point}
A reaction network with compact stoichiometric compatibility classes has at least one equilibrium in each class. If the network is persistent or reversible, this equilibrium is unique, and all trajectories converge to it.
\end{cor}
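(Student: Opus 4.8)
The plan is to assemble the statement from the fixed-point and convergence results already established, handling existence, the persistent case, and the reversible case in turn. Recall that the standing assumption of this section is that $\Gamma$ satisfies the hypotheses of Theorem~\ref{thm:collection of theorems}, so the induced dynamics are weakly contractive on the relative interior of each stoichiometric compatibility class, and the lemmas invoked below are all in force. First I would verify that a stoichiometric compatibility class $C = \{S_0 + \Gamma x \mid x \in \mathbb{R}^m\} \cap \mathbb{R}^n_{\geq 0}$ meets the three hypotheses of Lemma~\ref{lem:fixed_point_compact}. Convexity is immediate, since $C$ is the intersection of the affine set $S_0 + \im \Gamma$ with the convex orthant $\mathbb{R}^n_{\geq 0}$. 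Compactness holds by hypothesis. Forward-invariance follows because $\dot{x} = \Gamma R(x) \in \im \Gamma$ confines every trajectory to $S_0 + \im \Gamma$, while the kinetic assumptions (reaction rates vanish on the relevant boundary faces of the orthant) keep $\mathbb{R}^n_{\geq 0}$ forward-invariant; hence the intersection $C$ is forward-invariant.

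With these three properties in place, Lemma~\ref{lem:fixed_point_compact} (Yorke's Fixed Point Theorem) yields at least one equilibrium in each $C$, which establishes the first assertion without any persistence or reversibility hypothesis. For the remaining claims I would split into the two cases of the disjunction. If the network is persistent, then Lemma~\ref{lem:persistence_implies_convergence} applies verbatim and delivers exactly the desired conclusion: a unique equilibrium in the relative interior of $C$ to which every trajectory converges. If instead the network is reversible, Lemma~\ref{lem:reversible_unique_equilibrium} guarantees that all trajectories converge to \emph{at most one} equilibrium, which may lie on the boundary; combining this with the existence of an equilibrium from the first part upgrades ``at most one'' to ``exactly one,'' so the equilibrium is unique and globally attracting.

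Since the corollary is an assembly of prior lemmas, I expect no substantive obstacle. The only points requiring genuine care are the verification of forward-invariance of $C$ (so that Yorke's theorem applies cleanly) and the reconciliation of the two cases in the final phrasing: the persistent case places the equilibrium in the relative interior, whereas the reversible case permits it on the boundary, so the combined statement must be worded to cover both. Confirming that the hypotheses of each invoked lemma are genuinely satisfied under the standing assumption that $\Gamma$ satisfies Theorem~\ref{thm:collection of theorems} is the main thing to check, and it follows directly from the section's running hypotheses.
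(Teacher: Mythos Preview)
Your proposal is correct and follows essentially the same route as the paper: existence via Lemma~\ref{lem:fixed_point_compact}, then uniqueness and convergence via Lemma~\ref{lem:persistence_implies_convergence} in the persistent case and Lemma~\ref{lem:reversible_unique_equilibrium} in the reversible case. You are more explicit than the paper in verifying the hypotheses of Yorke's theorem and in combining the ``at most one'' conclusion with existence, but the structure is identical.
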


\begin{proof}
By Lemma \ref{lem:fixed_point_compact}, at least one equilibrium exists. Uniqueness and global convergence follow from Lemma \ref{lem:persistence_implies_convergence} (for persistent networks) or Lemma \ref{lem:reversible_unique_equilibrium} (for reversible networks).
\end{proof}

\begin{define}
A dynamical system is globally convergent if every point in the state space \( \mathbb{X} \) converges to a unique equilibrium \( p \), i.e., if the conclusion of Corollary \ref{cor:global fixed point} holds.
\end{define}

\section{Checkable Conditions for the Monotonicity of Reaction Networks}
\label{sec:algo for cones}
 We now define several operations that are useful for determining whether a network is monotone with respect to a cone.

\begin{define}
Let \( \Gamma \) be a stoichiometric matrix, and let \( v \in \mathbb{R}^n \) be a vector. A vector \( v' \in \mathbb{R}^n \) is called \textit{permissible} if, for every reaction vector \( \Gamma_i \), we have \( v' \in Q_1(\Gamma_i) \cup Q_1(-\Gamma_i) \).

We say that subtracting (or adding) a reaction vector \( \Gamma_i \) from \( v \) is a \textit{permissible operation} if \( v \notin Q_2(\Gamma_i) \), and the resulting vector \( v - \Gamma_i \) (or \( v + \Gamma_i \)) is permissible.

A cone is said to be \textit{permissible} if it is generated by a set of permissible vectors.

\end{define}

\begin{define}
\label{def:closed}
Let \( A = \{v_1, v_2, \dots, v_k\} \subset \mathbb{R}^n \) be a finite set of vectors. We say \( A \) is \textit{closed} if it satisfies the following conditions:
\begin{enumerate}
    \item For every \( v' \in A \) and every reaction vector \( \Gamma_i \) such that \( v' \notin Q_2(\Gamma_i) \), at least one of the operations \( v' - \Gamma_i \) or \( v' + \Gamma_i \) is permissible. If $v' \in Q_1^+(\Gamma_i)$ then $v' - \Gamma_i \in Q_1(-\Gamma_i)$ and if $v' \in Q_1^-(\Gamma_i)$ then $v' + \Gamma_i \in Q_1(\Gamma_i)$.
    \item All such permissible operations produce vectors that are also contained in \( A \).
\end{enumerate}
\end{define}

\begin{define}
\label{def:viable set}
A nonempty set \( K \) of permissible vectors is called \textit{viable} if it is closed.
\end{define}

We always need our cones to be proper with respect to the invariant subspace we want to draw conclusions about, so we need the following lemma:

\begin{lem}
\label{lem_viable_at_least_stoich_dimension}
    The affine hull of a viable set $M$  always has at least the same dimension as the stoichiometric compatibility classes. The affine hull contains a translation of the affine hull of any stoichiometric compatibility class.
\end{lem}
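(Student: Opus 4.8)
The plan is to reduce both assertions to the single linear-algebraic containment $\operatorname{span}\{\Gamma_i\} \subseteq L$, where $L := \operatorname{span}(M - v_0)$ is the direction space of $\operatorname{AffHul}(M)$ for a fixed $v_0 \in M$, so that $\operatorname{AffHul}(M) = v_0 + L$. The affine hull of any stoichiometric compatibility class $\{S_0 + \Gamma x\}\cap\mathbb{R}^n_{\ge 0}$ is the translate of a subspace $V_C \subseteq \operatorname{Im}\Gamma = \operatorname{span}\{\Gamma_i\}$, and its dimension is at most $\dim\operatorname{span}\{\Gamma_i\}$. Hence $\operatorname{span}\{\Gamma_i\}\subseteq L$ immediately yields $\dim\operatorname{AffHul}(M) = \dim L \ge \dim\operatorname{span}\{\Gamma_i\}$ (the dimension claim) as well as $v_0 + V_C \subseteq v_0 + L = \operatorname{AffHul}(M)$, so that $\operatorname{AffHul}(M)$ contains the translate through $v_0$ of the affine hull of the class (the translation claim).

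The engine of the argument is a one-step consequence of closedness: if some $v' \in M$ satisfies $v' \notin Q_2(\Gamma_i)$, then by Definition~\ref{def:closed} one of $v' + \Gamma_i$ or $v' - \Gamma_i$ is a permissible vector lying in $M$, whence $\pm\Gamma_i = (v' \pm \Gamma_i) - v' \in L$, i.e.\ $\Gamma_i \in L$. Thus it suffices to show that for every reaction $\Gamma_i$ the set $M$ is \emph{active} on $\Gamma_i$, meaning $M \not\subseteq Q_2(\Gamma_i)$; once activity holds for all $i$ we obtain $\operatorname{span}\{\Gamma_i\}\subseteq L$ and are done.

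To prove activity on every reaction I would argue by connectivity of the R-graph. First, $M$ is active on at least one reaction: choosing a nonzero $v^* \in M$ (we may assume $M\neq\{0\}$, since otherwise the generated cone is trivial and carries no information), any nonzero coordinate $s$ of $v^*$ lies in the support of some reaction by the standing assumption that every species participates in a reaction, so $v^* \notin Q_2(\Gamma_j)$ for that $\Gamma_j$. I then propagate activity along edges: if $M$ is active on $\Gamma_j$ via $v'$ and $\Gamma_j$ shares a species $s$ with a neighbor $\Gamma_k$, then either $(v')_s \neq 0$, so $v'$ is already nonzero on $\operatorname{supp}(\Gamma_k)$, or $(v')_s = 0$, in which case the permissible operation available at $\Gamma_j$ produces $v' \pm \Gamma_j \in M$ with $(v' \pm \Gamma_j)_s = \pm(\Gamma_j)_s \neq 0$; either witness shows activity on $\Gamma_k$. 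Since the R-graph is connected, activity spreads to all reactions.

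The step I expect to be the main obstacle is precisely this activity-propagation together with the boundary cases it rests on. One genuinely needs both the connectedness of the R-graph and the hypothesis that every species participates in a reaction, and one must handle the degenerate case $M=\{0\}$. The subtler point is that for an irreversible reaction $Q_2(\Gamma_i)$ constrains only the \emph{reactant} coordinates, so a shared species $s$ certifies activity on a neighbor only when $s$ enters that neighbor as a reactant; consequently the propagation may have to be routed through the directed $\mathcal{RI}$-graph (or through the fact that a product of one reaction reappears as a reactant elsewhere) rather than through the undirected sharing relation. Verifying that activity nonetheless reaches every reaction is where the real work lies, whereas the linear-algebraic reduction and the one-step closedness argument are routine.
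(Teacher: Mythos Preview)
Your reduction to the containment $\operatorname{span}\{\Gamma_i\}\subseteq L$ and the one-step closedness argument (``if $v'\notin Q_2(\Gamma_i)$ then $v'\pm\Gamma_i\in M$, hence $\Gamma_i\in L$'') are exactly the paper's approach. The paper's proof is in fact much terser than your outline: it fixes $v\in M$, asserts that ``because $M$ is viable, at some point for some $v\in M$ we form either $v-\Gamma_i$ or $v+\Gamma_i$,'' deduces $\Gamma_i$ lies in the direction space, and then simply writes ``since this is true for all $\Gamma_i$'' without any propagation argument. In other words, the paper takes for granted precisely the step you isolate as the main obstacle --- that $M$ is active on \emph{every} reaction --- and does not supply the R-graph (or $\mathcal{RI}$-graph) argument you sketch.

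Your concern about irreversible reactions is legitimate and is not addressed by the paper's proof either. For irreversible $\Gamma_k$ the set $Q_2(\Gamma_k)$ constrains only the reactant coordinates, so a shared species that enters $\Gamma_k$ only as a product does not witness activity; undirected R-graph propagation can indeed stall there. Routing along the directed $\mathcal{RI}$-graph, as you anticipate, fixes this: an edge $\Gamma_j\to\Gamma_k$ guarantees (when $\Gamma_k$ is irreversible) a species $s\in\operatorname{supp}(\Gamma_j)$ that is a reactant of $\Gamma_k$, and since $w$ and $w\pm\Gamma_j$ differ on every coordinate of $\operatorname{supp}(\Gamma_j)$, one of them is nonzero at $s$. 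The standing assumption that every species is either in a reversible reaction or a reactant of an irreversible one handles the base case. Every use of this lemma in the paper occurs under the hypothesis that the $\mathcal{RI}$-graph is strongly connected, so your completed argument is sound in context; your caution about needing that extra connectivity is well placed --- arguably more so than the paper's own proof, which elides the issue entirely.
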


\begin{proof}
    If $M$ contains the vector $v$, then we can show $\mbox{AffHul}(M) \supseteq v + \mbox{Im}(\Gamma)$. To see this, note that because $M$ is viable, at some point for some $v \in M$ we form either $v - \Gamma_i$ or $v + \Gamma_i$. Assume without loss of generality that we form $v- \Gamma_i$, and so $v' = v - \Gamma_i \in M$. Note that for all $v'' \in \mbox{AffHul}(M)$ and for all $\alpha \in \mathbb{R}$ we must also have $v'' + \alpha(v - v') = v'' + \alpha \Gamma_i \in \mbox{AffHul}(M)$. Since this is true for all $\Gamma_i$ we must have $ v + \mbox{Im}(\Gamma) \subseteq \mbox{AffHul}(M)$.

\end{proof}

\begin{thm}
\label{thm:lift viable for monotone}
     Suppose we have a viable set of vectors. If we define our cone $K$ to be the lift of this set, then our system is monotone with respect to $K$. 
\end{thm}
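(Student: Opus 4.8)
The plan is to reduce the claim to a reaction-by-reaction check via Lemma~\ref{lem:one_reac_makes_strict_monotone}(1) and, for each individual lifted reaction, to verify the hypotheses of Lemma~\ref{lem:subtract_off}. First I would fix notation: write $M = \{v_1,\dots,v_k\}$ for the viable set, so the lifted generators are $v_j' = [v_j;\,1]$ and $K = \cone\{v_1',\dots,v_k'\}$, while the lifted dynamics read $\dot z = \sum_i \tilde\Gamma_i R_i(x)$ with $\tilde\Gamma_i = [\Gamma_i;\,0]$ and each rate $R_i$ depending only on $x=(z_1,\dots,z_n)$. Each $\tilde\Gamma_i$ with rate $R_i$ is a legitimate single-reaction network on $\mathbb{R}^{n+1}$ in which the appended species does not participate, so Lemma~\ref{lem:subtract_off} applies to it verbatim. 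Since $K$ is finitely generated, its extreme rays lie among the generators $v_j'$, so every hypothesis I must verify for ``extreme rays of $K$'' reduces to a statement about elements of $M$.

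The linchpin observation is that sign-region membership is invariant under the lift, precisely because the last coordinate of $\tilde\Gamma_i$ vanishes: for any $i,j$ we have $v_j' \in Q_1(\tilde\Gamma_i)$ iff $v_j \in Q_1(\Gamma_i)$, and likewise for $Q_1(-\cdot)$, $Q_1^+$, $Q_1^-$, and $Q_2$. Granting this, condition~(1) of Lemma~\ref{lem:subtract_off} is immediate: every $v_j$ is permissible, hence $v_j \in Q_1(\Gamma_i)\cup Q_1(-\Gamma_i)$, so $v_j' \in Q_1(\tilde\Gamma_i)\cup Q_1(-\tilde\Gamma_i)$.

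Conditions~(2) and~(3) are where the ``append a $1$'' construction pays off. Let $k=v_j'$ be an extreme ray with $k \in Q_1^+(\tilde\Gamma_i)$; then $v_j \in Q_1^+(\Gamma_i)$, so $v_j \notin Q_2(\Gamma_i)$, and closedness of $M$ (Definition~\ref{def:closed}) yields $w := v_j - \Gamma_i \in Q_1(-\Gamma_i)$ with $w \in M$. Taking $k' := w' = [w;\,1] \in K$, the two appended $1$'s cancel in the difference, so $k-k' = [\Gamma_i;\,0] = \tilde\Gamma_i$ with $k' \in Q_1(-\tilde\Gamma_i)\cap K$, which is exactly condition~(2). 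The mirror computation for $k=v_j'$ with $v_j \in Q_1^-(\Gamma_i)$, using $w = v_j+\Gamma_i \in Q_1(\Gamma_i)\cap M$, gives $\tilde\Gamma_i = k'-k$, which is condition~(3). Lemma~\ref{lem:subtract_off} then makes each single lifted reaction monotone with respect to $K$, and Lemma~\ref{lem:one_reac_makes_strict_monotone}(1) assembles these into monotonicity of the full lifted system.

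The step I expect to require the most care is this alignment between the combinatorial closure of the finite set $M$ and the geometric ``subtract off into the opposite sign-region'' hypothesis of Lemma~\ref{lem:subtract_off}: the entire argument hinges on the fact that each closure move $v_j \mapsto v_j \mp \Gamma_i$ within $M$ is reproduced, after lifting, as a difference of two cone generators whose last coordinates cancel to recover $\tilde\Gamma_i$. Two standing points also deserve a remark. First, $K$ is automatically pointed, since all generators lie on the hyperplane $\{z_{n+1}=1\}$: if $z,-z \in K$ then both have nonnegative last coordinate, forcing $z_{n+1}=0$ and hence $z=0$. Second, monotonicity is to be understood with respect to the order that $K$ induces on the invariant affine spaces lying in $\spans(K)$, whose dimension is controlled by Lemma~\ref{lem_viable_at_least_stoich_dimension}, which is the setting in which Theorem~\ref{thm:monotone} and its corollary are invoked inside Lemma~\ref{lem:subtract_off}.
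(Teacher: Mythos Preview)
Your proposal is correct and follows essentially the same route as the paper: verify the three hypotheses of Lemma~\ref{lem:subtract_off} for each individual reaction using permissibility for condition~(1) and the closure property of the viable set for conditions~(2)--(3), then assemble via Lemma~\ref{lem:one_reac_makes_strict_monotone}(1). Your write-up is in fact more explicit than the paper's (you spell out the lift invariance of the $Q$-regions and the cancellation of the appended $1$'s), but the underlying argument is identical.
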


\begin{proof}
    In this case, we will check that we satisfy the conditions for Lemma 
    \ref{lem:subtract_off} and so that the system is monotone. To check that all the vectors are in $Q_1(\Gamma_1) \cup Q_1(-\Gamma_1)$ we simply need to note that this is part of Definition \ref{def:viable set} for the set to be viable.

    If we have a permissible vector $k \in Q_1^+(\Gamma_i)$, then by the closed condition this implies $k - \Gamma_i \in Q_1(-\Gamma_i)$ is also a member of our set (note that $k + \Gamma_i$ could no longer be a permissible vector, since both vector share at least one species with the same sign). Similar reasoning applies for $k \in Q_1^-(\Gamma_i)$, and so our system must be monotone for each individual reaction, and thus the whole network as well by Lemma \ref{lem:one_reac_makes_strict_monotone}.

\end{proof}

We are also interested in when our systems are strongly monotone.  

\begin{thm}
\label{thm:output_M_to_strong_monotone}
Let $\mathcal{N}$ be a reaction network with stoichiometric matrix $\Gamma$, where every species participates in at least one reaction, and assume the associated $\mathcal{RI}$-graph is strongly connected.

Suppose $M$ is a viable set of permissible vectors such that $\operatorname{AffHul}(M)$ has the same dimension as the stoichiometric compatibility classes. Let $M'$ denote the set of extreme points of the convex hull of $M$, and define the lifted cone $K'$ as the cone generated by the lift of either $M'$ or $M' \cup -M'$. Then the lift of $\mathcal{N}$ is strongly monotone with respect to $K'$.
\end{thm}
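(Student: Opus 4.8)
The plan is to verify the hypotheses of Lemma~\ref{lem:subtract_off_strong_2} for the lifted system $\dot z = g(z)$ and the cone $K'$, which immediately yields strong monotonicity. Throughout, the reaction vectors of the lifted system are the lifts $(\Gamma_i,0)$ (a zero appended in the new coordinate), and the extreme rays of $K'$ are exactly the lifts $(v,1)$ of the vertices $v$ of $\operatorname{conv}(M)$; in the symmetric case these are the vertices of $\operatorname{conv}(M'\cup -M')$, which lie in $M'\cup -M'$. Note also $\dim K' = \dim\operatorname{AffHul}(M)+1$, since $K'$ is the cone over the polytope $\operatorname{conv}(M)$ placed at height~$1$.

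First I would dispose of the monotonicity hypothesis of Lemma~\ref{lem:subtract_off_strong_2}, namely that each single-reaction subsystem satisfies Lemma~\ref{lem:subtract_off}. Since $M$ is viable and the lift of $M$ generates the same cone as the lift of its vertex set, Theorem~\ref{thm:lift viable for monotone} already supplies this (its proof checks precisely the conditions of Lemma~\ref{lem:subtract_off}). For the symmetric cone I would first observe that $M\cup -M$ is again viable: permissibility and each $Q_2(\Gamma_i)$ are invariant under $v\mapsto -v$, and the closedness requirement for $-v$ is the negative of the closedness requirement for $v$ (if $v\in Q_1^+(\Gamma_i)$ then $-v\in Q_1^-(\Gamma_i)$ and $-(v-\Gamma_i)=-v+\Gamma_i$). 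Hence the same theorem applies uniformly with $\tilde M\in\{M,\,M\cup -M\}$.

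The heart of the proof is the \emph{face condition}: for every proper face $F\subset K'$ I must produce a reaction $\Gamma_i$, an extreme ray $k=(v,1)\in F$ with $v\notin Q_2(\Gamma_i)$, and a vector $k_i\in K'\setminus F$ with $(\Gamma_i,0)=\pm(k-k_i)$. Viability does half the work: if $v\in Q_1^+(\Gamma_i)$ then closedness gives $v-\Gamma_i\in M$, so $k_i:=(v-\Gamma_i,1)\in K'$ and $(\Gamma_i,0)=k-k_i$ (symmetrically for $Q_1^-$). Thus the only real content is to find, inside each proper face, one applicable reaction whose operation \emph{leaves} the face, i.e.\ with $v\mp\Gamma_i\notin\operatorname{conv}(V_F)$, where $V_F$ is the set of vertices whose lifts span $F$. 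I would argue by contradiction: suppose for some proper face every applicable operation stays inside, so $\operatorname{conv}(V_F)$ is closed under all reaction operations. Mimicking Lemma~\ref{lem_viable_at_least_stoich_dimension}, each reaction \emph{touched} by $V_F$ (some $v\in V_F$ with $v\notin Q_2(\Gamma_i)$) then satisfies $\Gamma_i=v-(v-\Gamma_i)\in \operatorname{AffHul}(V_F)-\operatorname{AffHul}(V_F)=:W_F$, and since every $v\in V_F$ is supported on the species-support set $T$ of $V_F$, we get $\operatorname{supp}(\Gamma_i)\subseteq T$.

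The decisive step is to upgrade ``touched'' to ``all'', and this is exactly where strong connectivity of the $\mathcal{RI}$-graph enters. I claim the touched reactions are closed under out-edges: if $\Gamma_i$ is touched then $\operatorname{supp}(\Gamma_i)\subseteq T$, and an $\mathcal{RI}$-edge $\Gamma_i\to\Gamma_j$ forces some species $k\in\operatorname{supp}(\Gamma_i)\subseteq T$ to be a shared species of a reversible $\Gamma_j$ or a reactant of an irreversible $\Gamma_j$; a witness $v'\in V_F$ with $v'_k\neq 0$ then gives $v'\notin Q_2(\Gamma_j)$, so $\Gamma_j$ is touched. A starting touched reaction exists because any nonzero $v\in V_F$ is nonzero on some species, and every species participates as a reactant or in a reversible reaction. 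Strong connectivity then makes \emph{every} reaction touched, so $\operatorname{Im}(\Gamma)\subseteq W_F$ and $\dim\operatorname{AffHul}(V_F)\ge\dim\operatorname{Im}(\Gamma)=\dim\operatorname{AffHul}(M)$. But $F$ proper gives $\dim F\le\dim K'-1=\dim\operatorname{AffHul}(M)$, hence $\dim\operatorname{AffHul}(V_F)=\dim F-1<\dim\operatorname{AffHul}(M)$, a contradiction. With the face condition in hand, Lemma~\ref{lem:subtract_off_strong_2} delivers strong monotonicity of the lift with respect to $K'$. The main obstacle is precisely this propagation step; a secondary point I would treat carefully is the degenerate face whose only vertex is the origin (where no operation is admissible, since $0\in Q_2(\Gamma_i)$ for every $i$), which I expect to rule out via the dimension hypothesis or dispatch directly.
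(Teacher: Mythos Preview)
Your proposal is correct and follows essentially the same route as the paper: both verify Lemma~\ref{lem:subtract_off_strong_2} by combining the dimension hypothesis with strong connectivity of the $\mathcal{RI}$-graph to locate, for each proper face, a reaction whose permissible operation exits the face. The paper frames this directly (defining the set $L$ of reactions expressible as differences of face-vertices and exhibiting $\Gamma_j\notin L$ with an $\mathcal{RI}$-edge from some $\Gamma_i\in L$), while you argue by contradiction via propagation of ``touched'' reactions---and the degenerate origin-only face you flag is precisely why the paper passes to $M'\cup -M'$ when $0\in M'$.
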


\begin{proof}
By Lemma~\ref{lem:extrem points characterize}, the set $M'$ consists of vectors in $M$. We verify the conditions of Lemma~\ref{lem:subtract_off_strong_2} for the lifted cone $K'$. Throughout, we slightly abuse notation by letting $v$ denote both a vector in $M'$ and its lift, where the meaning is clear from context.

Let $F \subset K'$ be a proper face. Let $M' = \{v_1, \dots, v_n\}$ and define $m = \{v_1, \dots, v_k\} \subseteq M$ as the subset of vectors from $M$ whose lifts lie in $F$. Define a set of reactions $L = \{\Gamma_{i_1}, \dots, \Gamma_{i_h}\} \subseteq \mathcal{N}$ such that for each $\Gamma_{i_j} \in L$, there exist $v_1, v_2 \in m$ and $k \in \{0,1\}$ such that
\[
\Gamma_{i_j} = (-1)^k (v_1 - v_2).
\]
Since $F$ is a proper face, $L$ cannot contain all reactions in $\mathcal{N}$ (this is where the assumption on the dimensionality of $\operatorname{AffHul}(M)$ is used).

Let \( f \in F \), and consider the translated face \(F' = F - f \). Then \( \operatorname{AffHul}(F') \) is a linear subspace. Now consider any $\Gamma_i \in \mathcal{N}$. Suppose $\Gamma_i \in \operatorname{AffHul}(F')$ and all $v' \in m$ satisfy $v' \in Q_2(\Gamma_i)$. Then $\operatorname{AffHul}(F') \subseteq Q_2(\Gamma_i)$, which contradicts the assumption that $\Gamma_i \in \operatorname{AffHul}(F')$, since $Q_2(\Gamma_i)$ is orthogonal to directions in which $\Gamma_i$ has support. Therefore, there must exist $v' \in m$ such that $v' \notin Q_2(\Gamma_i)$.

By the strong connectivity of the $\mathcal{RI}$-graph, there exists a reaction $\Gamma_j \notin L$ that is adjacent to some $\Gamma_i \in L$, or (if $L = \emptyset$) a $\Gamma_j$ that shares support with some $v \in m$ (such a reaction must exist since every species participates in at least one reaction). If no such $\Gamma_j$ existed, then the reactions in $L$ would be disconnected from the rest of the network—i.e., there would be no directed path from them to all other reactions—contradicting strong connectivity.

Because $\Gamma_i \in \operatorname{AffHul}(F')$ and $\Gamma_j$ shares species with $\Gamma_i$ (or directly with some $v \in m$), we can find $k \in m$ such that $k$ and $\Gamma_j$ share support (i.e., $k \not\in Q_2(\Gamma_j))$. Since $M$ is closed under permissible operations, at least one of $k$, $k - \Gamma_j$, or $k + \Gamma_j$ belongs to $M$ and remains permissible. Let $k_i$ denote the resulting vector:
\[
k_i = k \pm \Gamma_j,
\]
choosing the sign so that the operation is permissible. By construction, $k_i \in M$, but its lift does not lie in $F$, since $\Gamma_j \notin \operatorname{AffHul}(F')$. Thus, the conditions of Lemma~\ref{lem:subtract_off_strong_2} are satisfied.

This establishes strong monotonicity with respect to $K'$ in the case where $0 \notin M'$.

Now consider the case where $0 \in M'$. Define a new set $\widetilde{M}' = M' \cup -M'$, which remains viable because $M$ is closed under permissible operations, and these operations preserve viability under negation. The zero vector is now a convex combination of vectors in $\widetilde{M}'$, and the dimension of $\operatorname{AffHul}(\widetilde{M}')$ equals that of $M$, since each element of $-M'$ lies in the affine span of $M'$, and the presence of the zero vector implies that the affine span is a linear subspace.

Therefore, by applying the same argument to $\widetilde{M}'$ in place of $M'$, we conclude that the lift of $\mathcal{N}$ is strongly monotone with respect to the lifted cone generated by $\widetilde{M}' = M' \cup -M'$.
\end{proof}

We now introduce the concept of \textit{aligned} matrices, which are convenient to work with in several of the results that follow.

\begin{enumerate}
    \item There exists a diagonal matrix \( D \) with positive diagonal entries such that each entry of \( \Gamma D \) is in \( \{0, 1, -1\} \).
    
    \item Any two reaction vectors share at most one coordinate where they have the same nonzero sign, and at most one coordinate where they have opposite nonzero signs.
    
    \item For each reaction vector, there is at most one nonzero coordinate that is zero in all other reaction vectors.
\end{enumerate}

\begin{define}
A matrix satisfying the above conditions is called an \textit{aligned} matrix.
\end{define}

\begin{thm}
\label{thm:P*gamma network strongly monotone}
Suppose \( \Gamma \) is an aligned matrix and that we are given a viable set of vectors. Let \( N \) be a matrix whose columns are these vectors. Suppose further that:

\begin{enumerate}
    \item \( P \) is a matrix with no zero columns, and each row of \( P \) contains exactly one nonzero entry.
    \item The matrix \( \Gamma \) is strongly monotone (respectively, monotone) with respect to the cone generated by the lift of the columns of \( N \).
\end{enumerate}

Then the network with stoichiometric matrix \( P\Gamma \) is strongly monotone (respectively, monotone) with respect to the lift of the column vectors of \( PN \).
\end{thm}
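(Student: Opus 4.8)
The plan is to verify the Jacobian characterizations of monotonicity and strong monotonicity (Theorems~\ref{thm:monotone} and~\ref{thm:strong monotone strengthened}) for the lifted \(P\Gamma\)-system, by exhibiting the \(P\Gamma\)-dynamics as a \emph{pullback} of the \(\Gamma\)-dynamics through the structured map \(P\). Write \(\widehat{P}=\diag(P,1)\) for the lift of \(P\) to a map \(\mathbb{R}^{n+1}\to\mathbb{R}^{n'+1}\). Since each row of \(P\) has exactly one nonzero entry and \(P\) has no zero columns, the induced coordinate map \(c\) sending each new species to the unique old species it copies is surjective, so \(\ker P=\{0\}\) and \(P\) (hence \(\widehat P\)) is injective; consequently \(\widehat P^{t}\) is surjective. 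Because the lift of \(Pn_j\) equals \(\widehat P\) applied to the lift of \(n_j\), the cone \(K'\) generated by the lifts of the columns of \(PN\) is exactly \(\widehat P(K)\), where \(K\) is generated by the lifts of the columns of \(N\); likewise each lifted reaction vector of \(P\Gamma\) is \(\widehat P\) of the corresponding lifted reaction vector of \(\Gamma\). By Lemma~\ref{lem:one_reac_makes_strict_monotone} it suffices to control the reaction-by-reaction bilinear forms \(\langle \mathcal J_{(P\Gamma_i)\tilde R_i}k_1,k_2\rangle\). (The hypotheses that \(\Gamma\) is aligned and \(N\) viable enter only through the standing assumption that \(\Gamma\) is (strongly) monotone with respect to \(K\), which is what we will invoke.)

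Given any admissible kinetics \(\tilde R\) for \(P\Gamma\), set \(\widehat R=\tilde R\circ P\), a candidate kinetics for \(\Gamma\), and let \(\tilde f=P\Gamma\tilde R\) be the new vector field. A direct computation with \(\mathcal J_{\tilde f}=P\sum_i\Gamma_i\,\partial_y\tilde R_i\) shows that, writing \(k_1=\widehat P h_1\) and \(h_2=\widehat P^{t}k_2\) and suppressing the passive lift coordinate (which \(\widehat P\) fixes),
\[
\langle \mathcal J_{(P\Gamma_i)\tilde R_i}k_1,k_2\rangle=\langle \partial_x\widehat R_i,h_1\rangle\,\langle \Gamma_i,h_2\rangle=\langle \mathcal J_{\Gamma_i\widehat R_i}h_1,h_2\rangle,
\]
using \(\partial_x\widehat R_i=\partial_y\tilde R_i\,P\) and \(\langle P\Gamma_i,k_2\rangle=\langle\Gamma_i,\widehat P^{t}k_2\rangle\). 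Moreover \(\langle k_1,k_2\rangle=\langle \widehat P h_1,k_2\rangle=\langle h_1,\widehat P^{t}k_2\rangle=\langle h_1,h_2\rangle\), and by the definition of the dual of an image cone, \(k_2\in (K')^{*}\iff \widehat P^{t}k_2\in K^{*}\). Thus the entire monotonicity test for \(P\Gamma\) at a pair \((k_1,k_2)\) coincides with the test for \(\Gamma\) at \((h_1,h_2)\). Finally, \(\widehat R\) inherits the derivative sign pattern of a \(\Gamma\)-kinetics: for a fixed old species \(j\), every term of \((\partial_x\widehat R_i)_j=\sum_{l:\,c(l)=j}(\partial_y\tilde R_i)_l\,P_{l,j}\) carries sign \(-\operatorname{sign}((\Gamma_i)_j)\), so no cancellation occurs, and surjectivity of \(c\) guarantees that a nonzero old derivative persists as a nonzero new one.

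For the monotone case, let \(k_1\in\partial K'\) and \(k_2\in(K')^{*}\) with \(\langle k_1,k_2\rangle=0\). Choose \(h_1\in K\) with \(\widehat P h_1=k_1\) (possible as \(\widehat P(K)=K'\)); injectivity of \(\widehat P\) forces \(h_1\in\partial K\). Put \(h_2=\widehat P^{t}k_2\in K^{*}\). Summing the bilinear-form identity over reactions gives \(\langle \mathcal J_{\tilde f}k_1,k_2\rangle=\langle \mathcal J_{\Gamma\widehat R}h_1,h_2\rangle\ge 0\) by the assumed monotonicity of \(\Gamma\) with kinetics \(\widehat R\), so \(P\Gamma\) is monotone by Theorem~\ref{thm:monotone}. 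For the strong case, fix \(k_1\in\partial K'\) and a preimage \(h_1\in\partial K\). The strong monotonicity of \(\Gamma\) (produced via Lemma~\ref{lem:subtract_off_strong_2} and Theorem~\ref{thm:strong monotone strengthened}) yields, for the minimal face containing \(h_1\), a single reaction \(\Gamma_i\) and a vector \(h_2\in K^{*}\) with \(\langle h_1,h_2\rangle=0\) and \(\langle \mathcal J_{\Gamma_i\widehat R_i}h_1,h_2\rangle>0\). Using surjectivity of \(\widehat P^{t}\), pick \(k_2\) with \(\widehat P^{t}k_2=h_2\); then \(k_2\in(K')^{*}\), \(\langle k_1,k_2\rangle=0\), and the bilinear-form identity upgrades this to \(\langle \mathcal J_{\tilde f}k_1,k_2\rangle>0\). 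Theorem~\ref{thm:strong monotone strengthened} then gives strong monotonicity of \(P\Gamma\) with respect to \(K'\).

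The step I expect to be the main obstacle is verifying that \(\widehat R=\tilde R\circ P\) genuinely behaves as an admissible \(\Gamma\)-kinetics at the level of derivatives — precisely, that duplicating one species into several (with possibly differing scaling signs through \(P\)) never produces sign cancellation in \((\partial_x\widehat R_i)_j\) and that strict positivity of a derivative is never lost. This is exactly where the hypotheses ``each row of \(P\) has a single nonzero entry'' and ``\(P\) has no zero columns'' are essential, and it is the derivative-level analogue of the compatibility \(\widehat P v\in Q_1(P\Gamma_i)\iff v\in Q_1(\Gamma_i)\) (and likewise for \(Q_2\)) that underlies the whole correspondence. A secondary point requiring care is that one should appeal only to the derivative sign conditions of the kinetics, not the boundary ``presence'' conditions, since \(Px\) need not be a genuine concentration vector when \(P\) has negative entries; this is harmless because the Jacobian criteria we invoke use only the derivative structure.
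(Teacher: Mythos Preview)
Your proof is essentially correct, but it takes a genuinely different route from the paper. The paper argues combinatorially: it observes that the structural conditions of Lemma~\ref{lem:subtract_off} and Lemma~\ref{lem:subtract_off_strong_2} --- namely, the region memberships \(k\in Q_1^{\pm}(\Gamma_i)\), \(k\notin Q_2(\Gamma_i)\), and the decompositions \(\Gamma_i=\pm(k-k_i)\) --- are preserved verbatim under \(P\), because \(P\) (with one nonzero per row and no zero columns) sends \(Q_1^{\pm}(\Gamma_i)\) to \(Q_1^{\pm}(P\Gamma_i)\) and \(Q_2(\Gamma_i)\) to \(Q_2(P\Gamma_i)\), and it induces a bijection between proper faces of \(K\) and of \(PK\). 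Your approach is analytic: you establish the bilinear-form identity \(\langle \mathcal J_{(P\Gamma_i)\tilde R_i}k_1,k_2\rangle=\langle \Gamma_i,h_2\rangle\langle d_i,h_1\rangle\) with \(d_i=P^{t}\partial_y\tilde R_i\), and then transfer the Jacobian inequalities through \(\widehat P\) and \(\widehat P^{t}\). Your observation that each term of \((d_i)_j\) carries sign \(-\operatorname{sign}((\Gamma_i)_j)\) is exactly the derivative-level analogue of the paper's ``\(P\) preserves the \(Q\)-regions.''

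The one place to tighten is precisely what you flag. The hypothesis ``\(\Gamma\) is strongly monotone'' does not, by itself, entitle you to the strict inequality for your particular \(\widehat R\), since \(\widehat R=\tilde R\circ P\) need not be an admissible \(\Gamma\)-kinetics (for instance, \(Px\) may have negative entries, and for irreversible reactions with negative entries in \(P\) the pullback derivative can be nonzero on products). Your parenthetical ``produced via Lemma~\ref{lem:subtract_off_strong_2}'' is the right fix, and it is the same move the paper makes: the viable-set hypothesis actually furnishes the \emph{structural} conditions of Lemma~\ref{lem:subtract_off_strong_2}, and its proof only uses the derivative sign pattern and interior strictness --- both of which you verify for \(d_i\) --- so the strict inequality follows for \(d_i\) directly, without ever needing \(\widehat R\) to be globally admissible. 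Once that is made explicit, your argument is complete. The paper's route is shorter because it never introduces \(\widehat R\) at all; your route has the advantage of making the transfer mechanism (\(k_1\mapsto h_1\) via \(\widehat P\), \(k_2\mapsto h_2\) via \(\widehat P^{t}\), and the dual-cone identity \((K')^{*}=(\widehat P^{t})^{-1}(K^{*})\)) completely transparent.
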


\begin{proof}
Assume first that \( \Gamma \) is strongly monotone with respect to the lift. Then the conditions of Lemma~\ref{lem:subtract_off_strong_2} are satisfied. Let \( K \) denote the cone generated by the lift of \( N \), and let \( PK \) denote the lift of \( PN \).

There is a bijection between proper faces \( F \subset K \) and faces \( PF \subset PK \), defined by \( PF = \{x \mid \exists y \in F \text{ such that } x = Py \} \). Because \( P \) has exactly one nonzero entry per row and no zero columns, it is injective and surjective onto its image, so this map between faces is well-defined and bijective.

Fix a face \( PF \subset PK \). By Lemma~\ref{lem:subtract_off_strong_2}, for the corresponding face \( F \subset K \), there exist \( \Gamma_i \), \( k \in F \), and \( k_i \in K \setminus F \) such that:
\begin{enumerate}
    \item \( k \notin Q_2(\Gamma_i) \),
    \item \( \Gamma_i = (-1)^j(k - k_i) \) for some \( j \in \{0,1\} \).
\end{enumerate}

We claim that the triple \( P\Gamma_i, Pk, Pk_i \) satisfies the same conditions for the face \( PF \subset PK \). This follows from the fact that \( P \) preserves the qualitative regions: if \( v \in Q_1^+(\Gamma_i) \), then \( Pv \in Q_1^+(P\Gamma_i) \); similarly for \( Q_1^- \) and \( Q_2 \). Therefore:
\begin{enumerate}
    \item \( Pk \notin Q_2(P\Gamma_i) \),
    \item \( P\Gamma_i = (-1)^j(Pk - Pk_i) \),
    \item \( Pk_i \notin PF \).
\end{enumerate}

Hence, the conditions of Lemma~\ref{lem:subtract_off_strong_2} are satisfied for \( P\Gamma \), and the network is strongly monotone with respect to the cone generated by the columns of \( PN \).

For the case of monotonicity, a similar argument applies using Lemma~\ref{lem:subtract_off}. Suppose \( \Gamma \) is monotone with respect to \( K \). Given a reaction vector \( \Gamma_i \), and vectors \( k \in Q_1^+(\Gamma_i) \), \( k' \in Q_1(-\Gamma_i) \cap K \), such that \( \Gamma_i = k - k' \), we apply \( P \) to obtain:
\[
P\Gamma_i = Pk - Pk', \quad Pk \in Q_1^+(P\Gamma_i), \quad Pk' \in Q_1(-P\Gamma_i) \cap PK.
\]
Thus, Conditions 2 and 3 of Lemma~\ref{lem:subtract_off} are preserved. Condition 1 also holds since, as before, \( P \) preserves region membership: if \( v \in Q_1(\Gamma_i) \cup Q_1(-\Gamma_i) \), then \( Pv \in Q_1(P\Gamma_i) \cup Q_1(-P\Gamma_i) \).

This completes the proof.
\end{proof}

\section{Cross-Polytope Cones}
\label{sec:cross-polytopes cones}

We are now in a position to prove new results about the dynamics of a special class of networks. Specifically, we show that this class is monotone with respect to cones whose associated poset structure corresponds to either a cross-polytope or a simplex.

\begin{define}
    Let \( e_i = (0, 0, \dots, 0, 1, 0, \dots, 0) \) denote the \( i \)-th standard basis vector in \( \mathbb{R}^n \), with a \( 1 \) in the \( i \)-th position and zeros elsewhere.

    A \textit{standard simplex} is a set of the form
    \[
    S = \left( \bigcup_i \{ e_{j_i} \} \right) \cup \left( \bigcup_l \{ -e_{k_l} \} \right),
    \]
    such that for no index \( i \) does the set \( S \) contain both \( e_i \) and \( -e_i \); that is, \( \{ e_i, -e_i \} \not\subset S \) for any \( i \). A \textit{simplicial cone} is defined as the conical hull (or lift) of a standard simplex.

    A \textit{standard cross-polytope} is a set of the same form,
    \[
    S = \left( \bigcup_i \{ e_{j_i} \} \right) \cup \left( \bigcup_l \{ -e_{k_l} \} \right),
    \]
    with the additional condition that for each index \( i \), either both \( e_i \) and \( -e_i \) are in \( S \), or neither is; that is,
    \[
    \text{for all } i, \quad \{ e_i, -e_i \} \subset S \quad \text{or} \quad \{ e_i, -e_i \} \cap S = \emptyset.
    \]
    A \textit{cross-polytope cone} is the lift of a standard cross-polytope.
\end{define}

\begin{define}
    An aligned matrix is said to be of \textit{type C} if it satisfies the following conditions:
    \begin{enumerate}
        \item Every entry belongs to the set \(\{ -2, -1, 0, 1, 2 \}\).
        \item Each column has \( \ell_1 \)-norm equal to 2.
        \item The matrix has no zero rows.
    \end{enumerate}
\end{define}

\begin{thm}
\label{thm:type C strongly monotone}
If the stoichiometric matrix \( \Gamma \) with a strongly connected $\mathcal{RI}$-graph is of type C, then the corresponding reaction network is strongly monotone with respect to a cone whose poset structure is that of either a cross-polytope or a simplex.
\end{thm}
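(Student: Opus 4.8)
The plan is to feed Theorem~\ref{thm:output_M_to_strong_monotone} a viable set $M$ built entirely out of signed standard basis vectors, and then to read off the poset type of the resulting lifted cone from the combinatorics of $M$. First I would record the possible columns: since every entry of $\Gamma$ lies in $\{-2,-1,0,1,2\}$ and every column has $\ell_1$-norm exactly $2$, each reaction vector is either a single-species vector $\pm 2e_k$ or a two-species vector $\pm e_a \pm e_b$ with $a \neq b$. I would then note that every signed basis vector $\pm e_k$ is permissible, because a vector supported on one coordinate automatically lies in $Q_1(\Gamma_i)\cup Q_1(-\Gamma_i)$ for every reaction $\Gamma_i$.

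The technical heart is a closure fact: if $s=\pm e_k$ and $(\Gamma_i)_k \neq 0$ (so $s\notin Q_2(\Gamma_i)$), then the canonical reduction of Definition~\ref{def:closed}---subtract $\Gamma_i$ when $s\in Q_1^+(\Gamma_i)$ and add it when $s\in Q_1^-(\Gamma_i)$---is again a signed basis vector. For a single-species column $\pm 2e_k$ the reduction toggles $e_k \leftrightarrow -e_k$, while for a two-species column $\pm e_a \pm e_b$ it cancels the shared coordinate and leaves $\pm e_{\text{other}}$. I would package these reductions as an undirected graph $G$ on the $2n$ vertices $\{\pm e_i\}$, where each reduction $s\mapsto s'$ (which is involutive) becomes an edge $s\sim s'$; by construction, any single connected component $C$ of $G$ is closed under all permissible reductions and consists of permissible vectors, hence is a viable set.

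Next I would pin down the dimension. By Lemma~\ref{lem_viable_at_least_stoich_dimension}, $\dim\operatorname{AffHul}(C)\geq \operatorname{rank}(\Gamma)$. For the reverse inequality, observe that adjacent vertices of $G$ differ by $\pm\Gamma_i$, so for any $s,s'\in C$ the difference $s-s'$ is an integer combination of columns of $\Gamma$, hence lies in $\operatorname{Im}(\Gamma)$; fixing a basepoint $s_0$ gives $\operatorname{AffHul}(C)\subseteq s_0 + \operatorname{Im}(\Gamma)$ and therefore $\dim\operatorname{AffHul}(C)=\operatorname{rank}(\Gamma)$, matching the dimension of the stoichiometric compatibility classes. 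Using the given strong connectivity of the $\mathcal{RI}$-graph, Theorem~\ref{thm:output_M_to_strong_monotone} then yields that the lift of the network is strongly monotone with respect to the lifted cone generated by the extreme points of $\operatorname{conv}(C)$.

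It remains to identify this cone. Each element of $C$ lies on the unit sphere, and distinct points on the sphere are extreme in their convex hull, so the extreme-point set equals $C$ and does not contain $0$; thus the relevant cone is exactly the lift of $C$. Since the antipodal map $s\mapsto -s$ is an automorphism of $G$, every component satisfies either $C=-C$ or $C\cap(-C)=\emptyset$. In the first case $C$ contains both $e_i$ and $-e_i$ for every coordinate it meets, so $C$ is a standard cross-polytope and its lift is a cross-polytope cone; in the second case $C$ contains no antipodal pair, so $C$ is a standard simplex and its lift is a simplicial cone. Either way the cone has the claimed poset structure. The hard part will be the closure/dimension bookkeeping: verifying that the $\ell_1$-norm $=2$ hypothesis is precisely what forces the canonical reduction of a signed basis vector to be another signed basis vector (a three-term column such as $e_a+e_b+e_c$ would instead yield $-e_b-e_c$), and confirming that a single connected component already attains the full stoichiometric dimension, so that Theorem~\ref{thm:output_M_to_strong_monotone} applies with this minimal cone.
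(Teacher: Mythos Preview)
Your proposal is correct and follows essentially the same approach as the paper: both build a viable set out of signed standard basis vectors, verify closure via the $\ell_1$-norm $=2$ case analysis, match the affine-hull dimension to $\operatorname{rank}\Gamma$, and then invoke Theorem~\ref{thm:output_M_to_strong_monotone}. Your graph $G$ on $\{\pm e_i\}$ and the antipodal-automorphism argument for the cross-polytope/simplex dichotomy are clean repackagings of what the paper does by taking a ``minimal viable set'' and arguing that once both $e_i$ and $-e_i$ appear the closure propagates to all of $\{\pm e_j\}$; the underlying mechanism is the same.
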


\begin{proof}
Let \( \Gamma \) be a stoichiometric matrix of type C. Each column of \( \Gamma \) has \( \ell_1 \)-norm equal to 2 and entries in \( \{-2, -1, 0, 1, 2\} \). Therefore, each column is one of the following:
\begin{enumerate}
    \item A column with two nonzero entries, both equal to \( 1 \),
    \item A column with one entry equal to \( 1 \) and one equal to \( -1 \),
    \item A column with a single nonzero entry equal to \( \pm 2 \).
\end{enumerate}

Let \( K \subseteq \mathbb{R}^n \) denote the set of all signed standard basis vectors:
\[
K = \left\{ e_i, -e_i \mid 1 \leq i \leq n \right\}.
\]
We claim that \( K \) is a viable set in the sense of Definition~\ref{def:viable set}.

To verify that \( K \) is closed (Definition~\ref{def:closed}), consider any \( v \in K \) and any column \( \Gamma_j \) of \( \Gamma \). If \( v \notin Q_2(\Gamma_j) \), then by the structure of \( \Gamma \), either \( v - \Gamma_j \in K \) or \( v + \Gamma_j \in K \). For example:
\begin{enumerate}
    \item If \( \Gamma_j = e_i + e_k \) and \( v = e_i \), then \( v - \Gamma_j = -e_k \in K \).
    \item If \( \Gamma_j = e_i - e_k \) and \( v = e_i \), then \( v - \Gamma_j = e_k \in K \).
    \item If \( \Gamma_j = 2e_i \) and \( v = e_i \), then \( v - \Gamma_j = -e_i \in K \).
\end{enumerate}
In each case, the result remains in \( K \), showing that \( K \) is closed under permissible operations.

To verify the permissibility condition for viability, observe that each \( k \in K \) has only one nonzero entry. Thus, for any reaction vector \( \Gamma_j \), either \( k \in Q_1(\Gamma_j) \) or \( k \in Q_1(-\Gamma_j) \). Hence, all elements of \( K \) satisfy the viability condition, and we conclude that \( K \) is viable.

Next, since \( \Gamma \) has no zero rows, every species appears in at least one reaction. Therefore, for each index \( i \), at least one of \( e_i \) or \( -e_i \) is involved in a permissible operation with some \( \Gamma_j \).

Let \( S \subseteq K \) be the minimal viable set generated by starting from any \( v \in K \) that shares support with a column of \( \Gamma \), and iteratively applying permissible operations. Because the $\mathcal{RI}$-graph (whose nodes are reaction vectors connected by shared species) is strongly connected, this construction propagates through all indices.

Moreover, all new points in \( S \) are generated by adding or subtracting reaction vectors, so \( \operatorname{AffHul}(S) \) has dimension at most that of the stoichiometric compatibility class. By Lemma~\ref{lem_viable_at_least_stoich_dimension}, this implies equality of dimension. Therefore, \( S \) is a viable set satisfying the hypotheses of Theorem~\ref{thm:output_M_to_strong_monotone}. It follows that the system is strongly monotone with respect to the lifted cone generated by \( S \).

We now characterize the structure of this cone:
\begin{enumerate}
    \item If there exists an index \( i \) such that both \( e_i \in S \) and \( -e_i \in S \), then by the strong connectedness of the $\mathcal{RI}$-graph and closure of \( S \), it follows that all elements of \( K \) are eventually included in \( S \). Indeed, if we can generate a point \( e_i - \Gamma_j \in S \), then the vector \( \Gamma_j - e_i = - (e_i - \Gamma_j) \) is also in \( S \). In this case, \( S = K \), and the corresponding cone is a cross-polytope cone.
    
    \item If no such index exists, then for each \( i \), at most one of \( e_i \) or \( -e_i \) lies in \( S \). In this case, \( S \) forms a standard simplex, and the corresponding cone is simplicial.
\end{enumerate}

Therefore, the reaction network is strongly monotone with respect to a cone whose poset structure is either that of a cross-polytope or a simplex.
\end{proof}

We in fact have a bit more information about when a type C reaction network is monotone with respect to a simplicial cone:

\begin{lem}
\label{lem:type C simplicial iff dependent rows}
A type C network is monotone with respect to a simplicial cone if and only if it has linearly dependent rows.
\end{lem}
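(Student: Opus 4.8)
The plan is to reduce the statement to a purely combinatorial fact about sign vectors. Since $\Gamma$ is type C, every column is of one of three forms: $e_i+e_k$, $e_i-e_k$, or $\pm 2e_i$. A simplicial cone is the lift of a standard simplex $S=\{\sigma_i e_i\}$ for signs $\sigma_i\in\{+1,-1\}$, and by Theorem~\ref{thm:lift viable for monotone} such a cone yields monotonicity as soon as $S$ is viable. The first step is therefore to compute when the standard simplex $\{\sigma_1 e_1,\dots,\sigma_n e_n\}$ is viable. Checking closure (Definition~\ref{def:closed}) of an extreme ray $\sigma_i e_i$ against each column shows that the permissible operation lands back in $S$ exactly when a column $e_i+e_k$ forces $\sigma_k=-\sigma_i$ (e.g.\ $e_i-(e_i+e_k)=-e_k$), a column $e_i-e_k$ forces $\sigma_k=\sigma_i$, and a column $\pm 2e_i$ produces $-\sigma_i e_i$, which is forbidden in a simplex. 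Each $\pm e_i$ has a single nonzero coordinate, so permissibility is automatic. Summarizing, the standard simplex on all $n$ species is viable if and only if $\sigma^{t}\Gamma=0$ where $\sigma=(\sigma_1,\dots,\sigma_n)$; in particular no single-species column can occur.

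Granting this equivalence, I would prove the two implications as follows. If a viable simplex exists, the vector $\sigma\in\{\pm1\}^n$ is a nonzero element of the left null space of $\Gamma$, so the rows of $\Gamma$ are linearly dependent. Conversely, suppose the rows are linearly dependent, so there is a nonzero $c$ with $c^{t}\Gamma=0$. Reading off the column equations gives $c_i=-c_k$ for every column $e_i+e_k$, $c_i=c_k$ for every column $e_i-e_k$, and $c_i=0$ for every single-species column $\pm2e_i$. Since every species lies in some reaction and the R-graph is connected, the species-sharing graph on $\{1,\dots,n\}$ is connected; hence $|c_i|=|c_k|$ along every edge forces all $|c_i|$ to equal one common value $t$, and $c\neq0$ gives $t>0$. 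Then $\sigma_i:=c_i/t\in\{+1,-1\}$ satisfies $\sigma^{t}\Gamma=0$ (no $\pm2e_i$ column can appear, since it would force $c_i=0$ and hence, by connectivity, $c\equiv0$). Thus $\{\sigma_i e_i\}$ is viable, and Theorem~\ref{thm:lift viable for monotone} makes the network monotone with respect to the associated simplicial cone.

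It remains to argue that monotonicity with respect to an \emph{arbitrary} simplicial cone, not just one produced by the explicit construction, still yields $\sigma^{t}\Gamma=0$. Let $K$ be the lift of a standard simplex $S=\{\sigma_i e_i : i\in J\}$ with respect to which the network is monotone. By Lemma~\ref{lem_viable_at_least_stoich_dimension}, $\operatorname{AffHul}(S)$ contains a translate of $\operatorname{Im}(\Gamma)$; since $\Gamma$ has no zero rows, $\operatorname{Im}(\Gamma)$ is contained in no coordinate hyperplane $\{x_i=0\}$, which forces $J=\{1,\dots,n\}$, so $S$ assigns a sign to every species. Applying Lemma~\ref{lem:signs on dual faces} to each extreme ray $\sigma_i e_i$ and each column sharing species $i$, the required sign conditions on the dual faces are precisely the closure relations above; hence $S$ is viable and $\sigma^{t}\Gamma=0$, giving linear dependence of the rows.

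The main obstacle is the normalization in the second paragraph: converting an arbitrary left null vector into a $\{\pm1\}$-valued one. This is exactly where the rigidity of type C is essential—because each column is $e_i\pm e_k$ or $\pm2e_i$, the null-space equations can only relate coordinates by $c_i=\pm c_k$ or set them to zero, so connectivity of the species graph (inherited from connectivity of the R-graph together with the no-zero-rows hypothesis) pins all magnitudes $|c_i|$ to a single value. A secondary technical point is confirming, in the third paragraph, that a monotonicity simplicial cone spans enough of the state space to reach every species, which is handled by Lemma~\ref{lem_viable_at_least_stoich_dimension}.
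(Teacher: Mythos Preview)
Your argument is essentially correct and follows the same route as the paper: both directions boil down to the equivalence ``the standard simplex $\{\sigma_i e_i\}_{i=1}^n$ is viable (equivalently, compatible with monotonicity) if and only if $\sigma^T\Gamma=0$,'' together with the observation that any left null vector of a type~C matrix can be normalized to a $\{\pm1\}$-vector via connectivity of the species graph. The paper performs the same computations, only without routing them explicitly through viability and Theorem~\ref{thm:lift viable for monotone}.

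There is one small circularity in your third paragraph. You invoke Lemma~\ref{lem_viable_at_least_stoich_dimension} to force $J=\{1,\dots,n\}$, but that lemma is stated for \emph{viable} sets, and viability of $S$ is precisely what you go on to derive in the next sentence. The paper avoids the issue by simply assuming from the outset that the simplicial set has one ray per species. If you want to argue $J=\{1,\dots,n\}$ honestly, do it through Lemma~\ref{lem:signs on dual faces} instead: if some column $\Gamma_j$ uses species $i\in J$ and $k\notin J$, then in the dual face of the lifted ray $(\sigma_i e_i,1)$ the $k$-th coordinate is unconstrained, so $\langle\Gamma_j,k'\rangle$ can be given the wrong sign, violating condition~(2) or~(3). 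Connectedness of the R-graph together with the no-zero-rows hypothesis then forces $J$ to be everything. With that patch (or the paper's convention), your proof goes through.
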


\begin{proof}
First, suppose that \( \Gamma \) has linearly dependent rows. Then there exists a nonzero vector \( v \) such that \( v^T \Gamma = 0 \). We claim that \( \Gamma \) cannot have any column with only a single nonzero entry. Indeed, if some column \( \Gamma_j \) has only one nonzero entry in coordinate \( i \), then \( (v)_i = 0 \). But if another column has nonzero entries in coordinates \( i \) and \( j \), then the equation \( v^T \Gamma = 0 \) forces \( (v)_j = 0 \) as well. Repeating this argument would eventually imply \( v = 0 \), contradicting our assumption. 

Therefore, every column of \( \Gamma \) must have at least two nonzero entries. Since \( \Gamma \) is type C, all entries lie in \( \{-1, 0, 1\} \), and each column must have exactly two nonzero entries.

Now consider such a vector \( v \) satisfying \( v^T \Gamma = 0 \). We can scale \( v \) so that each nonzero entry lies in \( \{ -1, 1 \} \). Define a simplicial cone generated by the vectors \( e_i \) for which \( (v)_i > 0 \), and \( -e_i \) for which \( (v)_i < 0 \). Without loss of generality, assume \( (v)_i > 0 \), so \( e_i \) lies in the cone.

Let \( \Gamma_j \) be a column of \( \Gamma \) with nonzero entries in coordinates \( i \) and \( k \). Then:
\[
(v)_i (\Gamma_j)_i + (v)_k (\Gamma_j)_k = 0 \quad \Rightarrow \quad (\Gamma_j)_k = -\frac{(v)_i}{(v)_k} (\Gamma_j)_i.
\]
Assume \( (\Gamma_j)_i > 0 \). Then the difference \( e_i - \Gamma_j \) will have a nonzero component in the \( k \)-th coordinate of the form \( \mathrm{sgn}((\Gamma_j)_k) e_k \). From the equation above, we have \( \mathrm{sgn}((\Gamma_j)_k) = \mathrm{sgn}((v)_k) \), so \( e_i - \Gamma_j \) lies in the cone. The same reasoning applies under sign changes of \( (v)_i \) or \( (\Gamma_j)_i \). Hence, the cone is forward-invariant under the reactions, and the system is monotone with respect to it.

Conversely, suppose \( \Gamma \) is a type C network that is monotone with respect to the lift of a simplicial set \( S = \{v_1, v_2, \dots, v_n\} \), where each \( v_i = \pm e_i \). Since the set is simplicial and composed of signed standard basis vectors, any column of \( \Gamma \) with only one nonzero entry would violate forward-invariance of the cone. Hence, every column of \( \Gamma \) must have exactly two nonzero entries.

Now define a vector \( v \in \mathbb{R}^n \) by setting \( (v)_1 = 1 \), and for each \( i \), define
\[
(v)_i = \mathrm{sgn}(v_i) \cdot \mathrm{sgn}(v_1),
\]
where \( \mathrm{sgn}(v_i) \) refers to whether \( v_i = e_i \) or \( v_i = -e_i \).

Now consider any column \( \Gamma_j \) with nonzero entries in coordinates \( j \) and \( k \). From monotonicity, we must have \( v_j \pm \Gamma_j \in S \), depending on the direction of the vector field. This implies:
\[
(\Gamma_j)_j (\Gamma_j)_k = -\mathrm{sgn}(v_j) \cdot \mathrm{sgn}(v_k).
\]
We now compute:
\[
v \cdot \Gamma_j = (v)_j (\Gamma_j)_j + (v)_k (\Gamma_j)_k.
\]
Substitute the expressions for \( (v)_j \) and \( (v)_k \) using the definition of \( v \):
\[
v \cdot \Gamma_j = \mathrm{sgn}(v_j)\mathrm{sgn}(v_1)(\Gamma_j)_j + \mathrm{sgn}(v_k)\mathrm{sgn}(v_1)(\Gamma_j)_k = \mathrm{sgn}(v_1)\left[ \mathrm{sgn}(v_j)(\Gamma_j)_j + \mathrm{sgn}(v_k)(\Gamma_j)_k \right].
\]
But by the identity above, we know \( \mathrm{sgn}(v_j)(\Gamma_j)_j = -\mathrm{sgn}(v_k)(\Gamma_j)_k \), so the sum is zero. Hence \( v \cdot \Gamma_j = 0 \) for every column \( \Gamma_j \), implying \( v^T \Gamma = 0 \), and thus the rows of \( \Gamma \) are linearly dependent.
\end{proof}

 We also have the following:

\begin{cor}
\label{cor:type C is weakly contractive}
    Type C reaction networks are weakly contractive with respect to some norm. 
\end{cor}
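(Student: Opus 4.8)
The plan is to read this corollary as the composition of two results already in hand: Theorem~\ref{thm:type C strongly monotone} supplies strong monotonicity, while the bridge Theorem~\ref{thm:strongly_monotone_lift_to_weakly contractive_general_2} converts strong monotonicity of a \emph{lift} into weak contractivity of the original system. So the entire argument reduces to producing the right cone and then checking the two transversality hypotheses of the bridge theorem.

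First I would invoke Theorem~\ref{thm:type C strongly monotone}. Working under the strong connectivity hypothesis for the $\mathcal{RI}$-graph (if the graph is not strongly connected, I would split into strongly connected components and argue componentwise, as in the remark of Section~\ref{sec:background}, taking the maximum of the per-component norms), that theorem constructs a viable set $M$ of signed standard basis vectors whose lift generates a cone $K \subset \mathbb{R}^{n+1}$ --- either a cross-polytope cone or a simplicial cone --- with respect to which the lift of the type~C network is strongly monotone.

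Next I would verify the hypotheses of Theorem~\ref{thm:strongly_monotone_lift_to_weakly contractive_general_2}. Let $L = \operatorname{Im}(\Gamma)$ denote the stoichiometric subspace, embedded in the lifted space as $L \times \{0\}$. Forward invariance of each translate $\mathbb{R}^n_{\geq 0} \cap (a + L)$ is immediate, since reaction dynamics preserve stoichiometric compatibility classes. Each generator of $K$ has the form $(v,1)$, so every nonzero vector in $K$ has strictly positive last coordinate, whereas every vector of $L \times \{0\}$ has last coordinate zero; hence $K \cap (L \times \{0\}) = \{0\}$. For the span condition, Lemma~\ref{lem_viable_at_least_stoich_dimension} gives $\operatorname{AffHul}(M) = v_0 + L$ for some $v_0$, so the lifts $(v,1)$ with $v \in M$ span $\mathbb{R}(v_0,1) \oplus (L \times \{0\})$, which contains $L \times \{0\}$; thus $\operatorname{span}(K) \supseteq L \times \{0\}$. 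With both hypotheses confirmed, Theorem~\ref{thm:strongly_monotone_lift_to_weakly contractive_general_2} produces a norm on the relative interior of each stoichiometric compatibility class under which the network is weakly contractive, which is exactly the claim.

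I expect the only genuinely delicate point to be the bookkeeping for the two cone conditions --- confirming that $K$ meets $L \times \{0\}$ only at the origin and that $\operatorname{span}(K)$ recovers all of $L \times \{0\}$ --- together with the reduction to strongly connected components if one wants the statement without explicitly assuming strong connectivity of the $\mathcal{RI}$-graph. Everything else is a direct appeal to the two preceding theorems.
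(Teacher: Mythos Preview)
Your proposal is correct and follows the same route as the paper: invoke Theorem~\ref{thm:type C strongly monotone} to obtain strong monotonicity of the lifted system with respect to the lifted cone, then apply Theorem~\ref{thm:strongly_monotone_lift_to_weakly contractive_general_2} to conclude weak contractivity. The paper's proof is a one-line appeal to these two theorems; you additionally spell out the verification of the transversality hypotheses $K \cap (L \times \{0\}) = \{0\}$ and $\operatorname{span}(K) \supseteq L \times \{0\}$, which the paper leaves implicit but which are indeed needed, and your arguments for both are sound.
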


\begin{proof}
    By Theorem \ref{thm:strongly_monotone_lift_to_weakly contractive_general_2}, the cone our system is strongly monotone with respect to (from Theorem \ref{thm:type C strongly monotone}) implies we can construct a norm for which we are weakly contractive. 
\end{proof}

\section{Additional Weakly Contractive Systems}
\label{sec:additional weakly contractive systems}
Using our methods we can show that several constructions (as well as some new constructions) in the literature are weakly contractive. We will first need an additional way to associate aligned matrices with more general stoichiometric matrices.

\begin{define}
    We say a stoichiometric matrix $\Gamma$ is \textit{alignable} if we can write it as $PND$ where 

    \begin{enumerate}
        \item $N$ is an aligned matrix with $-1,0,$ or $1$ entries.
        \item $P$ has no 0 columns or 0 rows and each row has exactly one nonzero entry.
        \item $D$ is a diagonal matrix with nonzero entries on its diagonal.
    \end{enumerate}

     We call $N$ in this factorization the \textit{alignment}.
\end{define}

\begin{define}
\label{def:equivalence for rows}
    Suppose we have a matrix $\Gamma$. Define an equivalence relation on the rows as follows: we say two rows $i$ and $j$ are related, written $i \sim j$, iff 

    \begin{enumerate}
        \item For all columns $\Gamma_k \in \Gamma$, $(\Gamma_k)_i \neq 0$ iff $(\Gamma_k)_j \neq 0$.
        \item For all columns $\Gamma_{k_1}, \Gamma_{k_2} \in \Gamma$, we have that $\mbox{sign} ((\Gamma_{k_1})_i(\Gamma_{k_1})_j) = \mbox{sign} ((\Gamma_{k_2})_i(\Gamma_{k_2})_j)$.
    \end{enumerate}

    We define an equivalence relation similarly for a set of vectors, viewing them as columns of a matrix. We call this equivalence relation for sets of vectors or matrices the \textit{coordinate equivalence}.
\end{define}

The main idea behind these definitions is that they allow us to simplify certain reaction networks. If we can write \( \Gamma = PND \), then for our purposes, it suffices to consider the simplified network with stoichiometric matrix \( \Gamma' = N \). In particular, we have the following:

\begin{cor}
\label{cor:align to general}
Suppose the stoichiometric matrix \( \Gamma = PND \) is alignable. If the simplified stoichiometric matrix \( \Gamma' = N \) is strongly monotone (respectively, monotone) with respect to a cone \( K \), then \( \Gamma \) is strongly monotone (respectively, monotone) with respect to the cone \( PK \).
\end{cor}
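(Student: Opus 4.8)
The plan is to reduce the statement to Theorem~\ref{thm:P*gamma network strongly monotone}, which already does the real work of transporting monotonicity across left-multiplication by $P$. There are only two moves: first strip off the diagonal factor $D$, and then invoke that theorem on the remaining product $PN$.

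First I would dispose of $D$. Writing the induced dynamics as $\dot{x} = PND\,R(x) = PN\,\tilde{R}(x)$ with $\tilde{R}(x) := D R(x)$, I observe that since $D$ is diagonal with nonzero entries it merely rescales each column of $N$ (i.e. each reaction vector) by a nonzero scalar, possibly reversing its orientation. A positive scaling preserves every sign condition in the kinetic assumptions, so $\tilde{R}_i = D_{ii}R_i$ is again admissible; hence the family of vector fields generated by $\Gamma = PND$ coincides with the family generated by $PN$, and the cone is untouched. Consequently $\Gamma$ is (strongly) monotone with respect to a cone if and only if $PN$ is, reducing the claim to showing that $PN$ is (strongly) monotone with respect to $PK$. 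This is the step that matches the informal remark that ``$D$ does not affect the class of dynamical systems.''

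Next I would apply Theorem~\ref{thm:P*gamma network strongly monotone}, taking its aligned matrix to be the alignment $N$ (which is aligned by the definition of alignable), its viable set to be the generators of $K$, and its $P$ to be the given $P$. The hypotheses on $P$ are exactly supplied by the alignable definition: $P$ has no zero columns and exactly one nonzero entry per row, which is condition~(1) of that theorem. The monotonicity hypothesis is the assumption on $N$, once we record that $K$ is the lift of a viable set, namely the set of extreme rays generating it; this is precisely how monotone cones arise in this framework (Theorems~\ref{thm:lift viable for monotone} and~\ref{thm:output_M_to_strong_monotone}), so $K$ is indeed ``the cone generated by the lift of the columns of'' that viable set in the language of the theorem. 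The theorem then yields that $PN$ is (strongly) monotone with respect to the lift of the columns of $P$ applied to that viable set, which is exactly $PK$ under the identification that $P$ acts on the spatial coordinates of the lift and fixes the appended coordinate (formally $\hat{P} = \operatorname{diag}(P,1)$). Combining this with the first step gives the claim for $\Gamma = PND$.

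I expect the main obstacle to be the bookkeeping around $D$ when a diagonal entry is negative: for a \emph{reversible} reaction, negating the reaction vector together with its rate is harmless because the kinetic axioms are symmetric under interchanging reactants and products, but for an \emph{irreversible} reaction a sign flip would force a nonnegative rate to become nonpositive, so one must either restrict to positive $D$ (as in the class $\mathcal{D}$ of the main theorem, which is the case of interest) or re-interpret the reversed reaction appropriately. The only other point needing care is confirming that the hypothesized cone $K$ genuinely has the lifted-viable-set form demanded by Theorem~\ref{thm:P*gamma network strongly monotone}; once that structural identification is made, the region-preservation property of $P$ (that it maps $Q_1^+(\Gamma_i)$, $Q_1^-(\Gamma_i)$, and $Q_2(\Gamma_i)$ into the corresponding regions of $P\Gamma_i$), already established inside the proof of that theorem, carries the conclusion automatically.
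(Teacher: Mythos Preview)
Your approach is correct and essentially identical to the paper's: the paper's proof also first strips off $D$ (phrased as ``multiplying on the right by $D^{-1}$ yields an equivalent reaction network'') and then invokes Theorem~\ref{thm:P*gamma network strongly monotone} directly. Your additional remarks about negative diagonal entries of $D$ and about verifying that $K$ has the lifted-viable-set structure required by Theorem~\ref{thm:P*gamma network strongly monotone} are valid points of care that the paper's two-line proof leaves implicit.
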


\begin{proof}
Multiplying on the right by \( D^{-1} \) yields an equivalent reaction network, so \( \Gamma = PND \) is strongly monotone (respectively, monotone) if and only if \( PN \) is. By Theorem~\ref{thm:P*gamma network strongly monotone}, if \( N \) is strongly monotone (respectively, monotone) with respect to \( K \), then \( PN \) is strongly monotone (respectively, monotone) with respect to \( PK \), as desired.
\end{proof}

\begin{lem}
\label{lem:cube type to aligned}
Let \( \Gamma \) be a matrix with at most two nonzero entries per row. Suppose that whenever two reactions \( R_i \) and \( R_j \) share a set of species \( I \), the corresponding subvectors \( (R_i)_I \) and \( (R_j)_I \) satisfy for each $l \in I$ that either \( (R_i)_l = (R_j)_l \) or \( (R_i)_l = - (R_j)_l \). Then \( \Gamma \) is alignable.
\end{lem}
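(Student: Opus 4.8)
The plan is to build the factorization $\Gamma = PND$ directly, taking $D$ to be the identity and placing all of the scaling into $P$. The guiding observation is that the hypothesis forces every row of $\Gamma$ to have all of its nonzero entries of equal magnitude: if a row (species) $l$ has two nonzero entries, they sit in two columns $R_i, R_j$ that share species $l$, so $(R_i)_l = \pm (R_j)_l$; a row with a single nonzero entry is trivially of this form. Consequently I can row-normalize, dividing each row $l$ by the common magnitude $m_l > 0$ of its nonzero entries, to obtain a matrix $\tilde\Gamma$ with entries in $\{-1,0,1\}$ and $\Gamma = M\tilde\Gamma$, where $M = \diag(m_l)$. (Column rescalings could instead be absorbed into a nontrivial $D$, but the per-row constancy of magnitudes means this is unnecessary.)

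Next I would merge the rows of $\tilde\Gamma$ under the coordinate equivalence of Definition~\ref{def:equivalence for rows}. Two coordinate-equivalent rows have the same support, and because all entries are $\pm 1$, condition (2) of that definition forces a constant sign pattern $\tilde\Gamma_{s_2,\cdot} = \varepsilon\,\tilde\Gamma_{s_1,\cdot}$ for a fixed $\varepsilon \in \{-1,1\}$; that is, the rows in one class are equal to, or exact negatives of, a chosen representative. Let $N$ be the matrix whose rows are these representatives (one per class), and let $P_1$ be the matrix with a single $\pm 1$ entry in each row, recording for each original row which representative it equals and with what sign, so that $\tilde\Gamma = P_1 N$. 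Setting $P = M P_1$ gives $\Gamma = PN$, i.e.\ $\Gamma = PND$ with $D = I$. By construction each row of $P$ has exactly one nonzero entry (of value $\pm m_l \neq 0$), so $P$ has no zero rows, and each column of $P$ contains at least the representative row it indexes, so $P$ has no zero columns; thus $P$ meets the requirements in the definition of alignable.

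It remains to verify that $N$ is aligned, which I expect to be the crux. Condition (1) is immediate since $N$ already has entries in $\{-1,0,1\}$ (take its own column scaling to be the identity). The main obstacle is conditions (2) and (3), and this is precisely where the hypothesis of at most two nonzero entries per row is essential. For condition (2), suppose toward a contradiction that two columns $i,j$ of $N$ shared two coordinates $s_1, s_2$ of the same sign. Since $N$ inherits the two-nonzero-entries-per-row bound, each of the rows $s_1, s_2$ is nonzero in at most two columns; as both are nonzero in columns $i$ and $j$, their supports are exactly $\{i,j\}$. Having identical support and, from the same-sign assumption, matching relative signs, rows $s_1$ and $s_2$ are coordinate equivalent, contradicting the fact that distinct rows of $N$ are representatives of distinct classes; the opposite-sign case is identical. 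For condition (3), if two distinct rows of $N$ each had their unique nonzero entry in the same column $i$ (so both have support $\{i\}$), they would again be coordinate equivalent, yielding the same contradiction. Hence $N$ satisfies all three aligned conditions, $\Gamma = PND$ is the desired factorization, and $\Gamma$ is alignable.
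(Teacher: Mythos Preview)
Your proof is correct and follows essentially the same approach as the paper's: both arguments quotient by the coordinate equivalence of Definition~\ref{def:equivalence for rows}, build $P$ from the equivalence classes and $N$ from class representatives, and then verify the aligned conditions by showing that any violation would force two distinct representatives to be coordinate-equivalent. The only cosmetic difference is that you first row-normalize $\Gamma$ to a $\{-1,0,1\}$-matrix and take representative \emph{rows} for $N$, whereas the paper selects a representative \emph{column} for each class to populate the corresponding column of $P$; these yield the same factorization.
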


\begin{proof}
Let \( \Gamma \) have \( n \) rows. Define coordinate equivalence classes \( [I]_{\nu} \) for \( \nu \in \{1,2,\dots,n\} \). For each class \( [I]_{\nu} \), select a representative reaction vector \( \Gamma_{\nu_i} \) such that \( \Gamma_{\nu_i} \) is nonzero for every species in \( [I]_{\nu} \).

Construct a matrix \( P \) where column \( i \) has a nonzero entry in row \( j \) if \( j \in [I]_i \), and set \( P_{ij} = (\Gamma_{\nu_i})_j \) for the representative \( \Gamma_{\nu_i} \); elsewhere, \( P \) is zero. Each column of \( P \) corresponds to an equivalence class, so each row has at most one nonzero entry, and there is no column of all zeros. Define a matrix \( N \) with entries:
\[
N_{ij} =
\begin{cases}
    1, & \text{if reaction vector } \Gamma_j \text{ has entries in } [I]_\nu \text{ matching } \Gamma_{\nu_i}, \\
   -1, & \text{if the entries are opposite}, \\
    0, & \text{if all corresponding entries are zero}.
\end{cases}
\]
Then, \( \Gamma = PN \), where \( N \) is aligned.

Indeed, the matrix \( N \) satisfies:
\begin{enumerate}
    \item Entries are in \( \{1,-1,0\} \), encoding whether \( \Gamma_i \) aligns positively, negatively, or does not overlap with the corresponding column in \( P \).
    \item Any two columns share at most two species with opposite signs. If two columns \( v_1 \) and \( v_2 \) shared three rows \( k_1, k_2, k_3 \), there would exist at least two rows, say \( k_1, k_2 \), such that \( \operatorname{sgn} (N_{k_1,v_1} N_{k_1,v_2}) = \operatorname{sgn} (N_{k_2,v_1} N_{k_2,v_2}) \). This would imply that the species in the equivalence classes of columns \( k_1 \) and \( k_2 \) in \( P \) should be merged into the same equivalence class.
    \item At most one nonzero entry per row where all other columns have zero. If two such entries existed in rows \( k_1 \) and \( k_2 \), then the equivalence classes in columns \( k_1 \) and \( k_2 \) of \( P \) must belong to a single larger equivalence class.
\end{enumerate}
Thus, \( N \) is aligned, completing the proof.
\end{proof}

\subsection{Cubical Cones}
\label{sec:cubical_cones}

Before examining the monotone systems in \cite{BANAJI20131359}, we will describe a general cubical cone construction.

\begin{define}
    We say an aligned network is \textit{cubical} if it satisfies the following properties:
    \begin{enumerate}
        \item Each row contains at most two nonzero entries.
        \item The columns are linearly independent.
    \end{enumerate}
\end{define}

\begin{thm}
\label{thm:cubical network cones}
    Cubical networks are monotone with respect to a cubical cone (i.e., a cone with the poset of a cube). 
\end{thm}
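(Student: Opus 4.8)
The plan is to exhibit an explicit viable set whose lift is a cubical cone and then invoke Theorem~\ref{thm:lift viable for monotone}. First I would normalize: since $\Gamma$ is aligned, multiplying on the right by the diagonal matrix $D$ from the definition of alignment does not change the associated dynamical system and lets me assume every entry of $\Gamma$ lies in $\{-1,0,1\}$. Writing the reaction vectors as $\Gamma_1,\dots,\Gamma_m$, I would define, for each sign pattern $\epsilon\in\{-1,1\}^m$, the candidate vertex
\[
v(\epsilon)=\tfrac12\sum_{i=1}^m \epsilon_i\,\Gamma_i,
\]
and set $M=\{v(\epsilon):\epsilon\in\{-1,1\}^m\}$. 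The geometric intuition is that each reaction vector $\Gamma_i$ plays the role of an edge direction of a combinatorial cube, and flipping the sign $\epsilon_i$ moves between the two parallel facets in direction $i$.

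The first technical step is to check that $M$ is a viable set. For permissibility I would fix a reaction $\Gamma_k$ and a coordinate $a$ in its support, and use the hypothesis that each row has at most two nonzero entries: coordinate $a$ is shared by $\Gamma_k$ and at most one other reaction $\Gamma_{k'}$. Since all nonzero entries are $\pm1$, a direct computation gives
\[
v(\epsilon)_a (\Gamma_k)_a=\tfrac12\bigl(\epsilon_k+\epsilon_{k'}(\Gamma_{k'})_a(\Gamma_k)_a\bigr)\in\{-1,0,1\},
\]
with the $\epsilon_{k'}$ term absent when $a$ lies only in $\Gamma_k$. The sign of this quantity is controlled entirely by $\epsilon_k$: it is $\ge 0$ when $\epsilon_k=1$ and $\le 0$ when $\epsilon_k=-1$. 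Since this holds at every coordinate of the support of $\Gamma_k$, I conclude $v(\epsilon)\in Q_1(\Gamma_k)$ when $\epsilon_k=1$ and $v(\epsilon)\in Q_1(-\Gamma_k)$ when $\epsilon_k=-1$, so every $v(\epsilon)$ is permissible. For closedness, note that when $v(\epsilon)\in Q_1^+(\Gamma_k)$ (so $\epsilon_k=1$) the designated operation $v(\epsilon)-\Gamma_k$ simply replaces $\epsilon_k$ by $-1$, i.e.\ $v(\epsilon)-\Gamma_k=v(\epsilon')\in M$, and symmetrically for $Q_1^-(\Gamma_k)$; hence $M$ is closed, and therefore viable.

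With viability in hand, Theorem~\ref{thm:lift viable for monotone} immediately yields that the network is monotone with respect to the cone $K$ generated by the lift $\{(v(\epsilon),1)\}$. The remaining step is to identify $K$ as a cubical cone, and this is where linear independence of the columns enters. The map $L\colon\mathbb{R}^m\to\mathbb{R}^n$ given by $L(t)=\tfrac12\sum_i t_i\Gamma_i$ is injective precisely because the $\Gamma_i$ are linearly independent, so it carries the combinatorial cube $[-1,1]^m$ onto a genuine, non-degenerate parallelotope whose vertex set is exactly $M$; thus the face lattice of $\operatorname{conv}(M)$ is that of an $m$-cube. Lifting to height one and coning over this parallelotope preserves its face lattice, since proper faces of $K$ correspond to faces of $\operatorname{conv}(M)$ and extreme rays to vertices, so $K$ has the poset of a cube, as required.

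The step I expect to be the main obstacle is the identification of $K$ as cubical rather than the monotonicity itself: permissibility and closedness reduce to the clean per-coordinate sign computation above, but ruling out degeneration of the parallelotope—ensuring all $2^m$ lifts remain distinct extreme rays and that the resulting face lattice is exactly the cube's—is precisely what forces the use of linear independence, and some care is needed to confirm that coning over the lifted parallelotope neither merges nor loses any faces.
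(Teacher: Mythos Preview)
Your proposal is correct and follows essentially the same strategy as the paper: build a viable set whose convex hull is an $m$-parallelotope and invoke Theorem~\ref{thm:lift viable for monotone}. The only difference is cosmetic: you center the cube at the origin via $v(\epsilon)=\tfrac12\sum_i\epsilon_i\Gamma_i$, whereas the paper fixes a particular corner vector $v$ (with $(v)_l=\pm1$ or $0$ according to the unanimous sign of column~$l$) and uses $A=\{v-\sum_{i\in J}\Gamma_i:J\subseteq[m]\}$; the two sets differ by the fixed translation $v-\tfrac12\sum_i\Gamma_i$, and either one lifts to a cubical cone. Your one-line sign computation $v(\epsilon)_a(\Gamma_k)_a=\tfrac12(\epsilon_k\pm1)$, whose sign is forced by $\epsilon_k$, is in fact tidier than the paper's two-case, sub-case verification of the same permissibility/closedness conditions, and your identification of the cube via the injective linear map $t\mapsto\tfrac12\sum_it_i\Gamma_i$ is exactly the paper's transformation $T$ in disguise.
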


\begin{proof}

Define an initial vector \( v \) with coordinates:
    \[
    (v)_i =
    \begin{cases}
        1, & \text{if all reaction vectors have a nonnegative \( i \)th coordinate,} \\
        -1, & \text{if all reaction vectors have a nonpositive \( i \)th coordinate,} \\
        0, & \text{otherwise.}
    \end{cases}
    \]
    Note that if \( v = 0 \), then each row of the stoichiometric matrix must either consist entirely of zeros or contain exactly one \(-1\) and one \(1\). Summing all reaction vectors would then yield the zero vector, contradicting their linear independence. Hence, this procedure ensures that \( v \neq 0 \). By construction, \( v \in Q_2(\Gamma_i) \cup Q_1^+(\Gamma_i) \) for all \( i \).

    Define the set:
    \[
    A = \{ v - \sum_{i \in J}  \Gamma_i \mid J \subset \{1,2,\dots,n\}
     \}.
    \]
    We must show that:
    1. \( A \) forms a cube.
    2. \( A \) is viable. Once we do this, by Theorem \ref{thm:lift viable for monotone} our system is indeed monotone with respect to the lift of this set, which is a cubical cone.

 To confirm the cube structure, consider an invertible transformation \( T \) such that \( T(R_i) = e_i \), where \( e_i \) is the \( i \)th standard basis vector. This maps our set to:
\[
 T(v) - \sum_{i \in J} e_i,
 \]
 which is a cube.

 Note that by construction, \( v \in Q_1(\Gamma_i) \) for all \( i \). Each vector in \( A \) has entries in \( \{0,1,-1\} \). Let \( v_J \in A \) be the vector corresponding to the subset \( J \) in the definition of \( A \). To show that \( A \) is closed, it suffices to verify that  
\[
i \in J \implies v_J \in Q_1(-\Gamma_i), \quad i \notin J \implies v_J \in Q_1(\Gamma_i).
\]

We proceed by case analysis.

\textit{Case 1:} \( i \in J \).  
Each coordinate of \( v_J \) that overlaps with \( \Gamma_i \) must be nonpositive. Consider an arbitrary such coordinate \( l \), and assume without loss of generality that \( (\Gamma_i)_l = -1 \).

If \( \Gamma_i \) is the only reaction involving coordinate \( l \), then \( (v_J)_l = 0 \).

Now suppose there is another reaction \( \Gamma_j \) such that \( (\Gamma_j)_l = -1 \). Since \( (v)_l = -1 \), it follows that:
\[
(v_J)_l =
\begin{cases}
0, & \text{if } j \notin J, \\
1, & \text{if } j \in J.
\end{cases}
\]
Next, suppose instead that \( (\Gamma_j)_l = 1 \). Since \( (v)_l = 0 \), we have:
\[
(v_J)_l =
\begin{cases}
0, & \text{if } j \in J, \\
1, & \text{if } j \notin J.
\end{cases}
\]
In all cases, these conditions ensure that \( v_J \in Q_1(-\Gamma_i) \).

\textit{Case 2:} \( i \notin J \).  
Each coordinate of \( v_J \) that overlaps with \( \Gamma_i \) must be nonnegative. Again, consider an arbitrary such coordinate \( l \) and assume \( (\Gamma_i)_l = -1 \).

If \( \Gamma_i \) is the only reaction involving coordinate \( l \), then \( (v_J)_l = -1 \).

Now suppose there is another reaction \( \Gamma_j \) such that \( (\Gamma_j)_l = -1 \). Since \( (v)_l = -1 \), it follows that:
\[
(v_J)_l =
\begin{cases}
-1, & \text{if } j \notin J, \\
0, & \text{if } j \in J.
\end{cases}
\]
Next, suppose instead that \( (\Gamma_j)_l = 1 \). Since \( (v)_l = 0 \), we have:
\[
(v_J)_l =
\begin{cases}
-1, & \text{if } j \in J, \\
0, & \text{if } j \notin J.
\end{cases}
\]
In all cases, these conditions ensure that \( v_J \in Q_1(\Gamma_i) \).

Since the set is closed under permissible operations, and for each reaction \( \Gamma_i \) there exists some \( k \in A \) such that \( k \in Q_1(\Gamma_i) \cup Q_1(-\Gamma_i) \), we conclude that \( A \) is a viable set.
 \end{proof}

We will call the cones from \cite{BANAJI20131359} \textit{type S} cones, and the corresponding networks \textit{type S} networks. The cones are of the form $\Lambda = c 1^t + \Gamma B$ where $c$ is some column vector, $1^t$ is the transpose of a vector of all 1's, $\Gamma$ is some stoichiometric matrix, and $B$ contains as columns the vertices of a cube, e.g., $B = \begin{bmatrix} 0 & 1 & 0 & 1 \\ 0 & 0 & 1 & 1 \end{bmatrix}$. 

We will also require a new type of graph introduced in \cite{BANAJI20131359}. Given two matrices \( A \in \mathbb{R}^{m \times n} \) and \( B \in \mathbb{R}^{n \times m} \), we define the directed bipartite graph \( G_{A,B} \) as follows. The graph has \( m \) vertices labeled \( u_1, \ldots, u_m \) and \( n \) vertices labeled \( v_1, \ldots, v_n \). For each pair \( (i, j) \), we include a directed edge \( u_i \to v_j \) iff \( A_{ij} \neq 0 \), and a directed edge \( v_j \to u_i \) iff \( B_{ji} \neq 0 \).

A type $S$ network $\Gamma$ satisfies the following: 

\begin{enumerate}
    \item $\Gamma$ has linearly independent columns.
    \item There exists a constant vector $c$ such that $\Lambda = c 1^t + \Gamma B$ satisfies:
    \begin{enumerate}
        \item There exists a nonnegative matrix $P$ such that $\Lambda P = I$.
        \item There exists a diagonal matrix $D$ with positive entries such that $D \Lambda$ has entries only consisting of $-1,1$ or $0$.
    \end{enumerate}
    \item The graph $G_{\Gamma, \partial R}$ is strongly connected
\end{enumerate}

\begin{lem}
\label{lem:DSR strong to RI strong}
The \( \mathcal{RI} \)-graph of a network is strongly connected if and only if the \( G_{\Gamma, \partial R} \) graph is strongly connected.
\end{lem}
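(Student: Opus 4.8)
The plan is to reduce strong connectivity of the bipartite graph $G_{\Gamma,\partial R}$ to strong connectivity of the purely reaction-based $\mathcal{RI}$-graph by contracting each length-two ``reaction--species--reaction'' path of $G_{\Gamma,\partial R}$ to a single $\mathcal{RI}$-edge. Write $\Gamma \in \mathbb{R}^{s\times r}$ for the $s$ species and $r$ reactions, so that $\partial R \in \mathbb{R}^{r\times s}$, and let $u_1,\dots,u_s$ be the species vertices and $v_1,\dots,v_r$ the reaction vertices of $G_{\Gamma,\partial R}$. First I would record the meaning of the two families of edges. The edge $u_i\to v_k$ is present iff $\Gamma_{ik}\neq 0$, i.e. species $i$ participates in reaction $k$. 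The edge $v_j\to u_i$ is present iff $(\partial R)_{ji}$ is not identically zero; by the kinetic assumptions this happens exactly when $x_i$ is a reactant of reaction $j$ (if $j$ is irreversible) or when $x_i$ participates in reaction $j$ (if $j$ is reversible, since both reactants and products have nonzero partials).

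The key structural observation is that $G_{\Gamma,\partial R}$ is bipartite between $\{u_i\}$ and $\{v_j\}$, so every directed path joining two reaction vertices alternates and factors into consecutive blocks $v_j \to u_i \to v_k$. I would then verify that such a block exists iff the $\mathcal{RI}$-graph contains the edge $\Gamma_k \to \Gamma_j$: indeed $v_j\to u_i\to v_k$ requires a species $i$ that drives $R_j$ (reactant of $j$ if $j$ is irreversible, shared with $j$ if $j$ is reversible) and that also lies in reaction $k$, which is precisely the condition defining the $\mathcal{RI}$-edge from $k$ to $j$ in its two cases. Thus the relation on reaction vertices induced by two-step paths in $G_{\Gamma,\partial R}$ is exactly the edge-reversal of the $\mathcal{RI}$-graph, and since strong connectivity is invariant under reversing all edges, these two reaction-level graphs are simultaneously strongly connected.

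It then remains to pass between strong connectivity of $G_{\Gamma,\partial R}$ and strong connectivity of its reaction-level contraction. For the forward direction, if $G_{\Gamma,\partial R}$ is strongly connected then any two reaction vertices are joined by a directed path, which by bipartiteness decomposes into $v\to u\to v$ blocks, giving a path in the contracted graph; hence the $\mathcal{RI}$-graph is strongly connected. For the converse I would invoke the standing assumption that every species participates in at least one reaction---either in a reversible reaction or as a reactant of an irreversible one---which guarantees that each species vertex $u_i$ is bidirectionally attached to some reaction vertex $v_j$ (both $u_i\to v_j$ and $v_j\to u_i$ present). Consequently, once every reaction vertex reaches every other (which follows from strong connectivity of the contracted graph), each species vertex can be reached through an incoming edge from its attached reaction and can exit through an outgoing edge to a reaction, so $G_{\Gamma,\partial R}$ is strongly connected.

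I expect the main obstacle to be bookkeeping the reversible/irreversible case split so that the $v_j\to u_i$ edges match the reactant-versus-shared-species conditions in the $\mathcal{RI}$-graph, together with the direction reversal appearing in the two-step-path correspondence; the bipartite factorization and the attachment argument using the standing assumption are comparatively routine once this edge dictionary is pinned down.
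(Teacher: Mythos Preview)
Your proposal is correct and follows essentially the same approach as the paper: you identify the correspondence between two-step $v_j\to u_i\to v_k$ paths in $G_{\Gamma,\partial R}$ and (reversed) $\mathcal{RI}$-edges, use bipartiteness to factor any reaction-to-reaction path into such blocks, and handle the species vertices in the converse direction via the standing assumption that every species participates in some reaction. The paper's proof does exactly this, with the same direction reversal and the same appeal to the standing assumption.
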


\begin{proof}
First, assume that \( G_{\Gamma, \partial R} \) is strongly connected. Interpret the \( u_i \) nodes of \( G_{\Gamma, \partial R} \) as species (i.e., rows of \( \Gamma \)) and the \( v_i \) nodes as reactions (i.e., columns of \( \Gamma \)). Since the graph is strongly connected, there exists a directed path between any two reactions \( v_1 \) and \( v_2 \). Consider a portion of such a path involving nodes \( v_1 \to u \to v_i \). The edge \( u \to v_i \) implies that species \( u \) is affected by reaction \( v_i \), and the edge \( v_1 \to u \) implies that \( u \) influences the kinetics of reaction \( v_1 \). Therefore, in the \( \mathcal{RI} \)-graph, there must be a directed edge \( v_i \to v_1 \). Repeating this argument along the path from \( v_1 \) to \( v_2 \) yields a reversed path in the \( \mathcal{RI} \)-graph from \( v_2 \) to \( v_1 \). Since such reversed paths exist for all reaction pairs, we conclude that the \( \mathcal{RI} \)-graph is strongly connected.

Now assume that the \( \mathcal{RI} \)-graph is strongly connected. The argument is similar. If there is a directed edge from reaction \( v_1 \) to \( v_2 \), then there exists a species \( u \) such that \( v_1 \) affects \( u \), and \( u \) influences the kinetics of \( v_2 \). In \( G_{\Gamma, \partial R} \), this corresponds to the edges \( v_2 \to u \) and \( u \to v_1 \). Applying this construction to all edges in the \( \mathcal{RI} \)-graph shows that for any pair of reactions \( v_i \) and \( v_j \), there is a directed path in \( G_{\Gamma, \partial R} \) connecting them. Additionally, each species \( u_i \) has both incoming and outgoing edges in \( G_{\Gamma, \partial R} \): it appears in at least one reaction and affects the kinetics of at least one reaction. (This follows from our standing assumption that every species influences the kinetics of some reaction—otherwise, it would not affect the dynamics.) Hence, \( G_{\Gamma, \partial R} \) is strongly connected.
\end{proof}

\begin{thm}
\label{thm:type S align}
    Type S networks are alignable, with a cubical alignment.
\end{thm}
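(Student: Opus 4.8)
The plan is to reduce the statement to Lemma~\ref{lem:cube type to aligned}, which already converts a matrix having at most two nonzero entries per row, with matching or opposite signs on shared species, into an aligned factorization $\Gamma = PN$. The real work is therefore to extract from the type S hypotheses the two structural facts that (i) every row of $\Gamma$ has at most two nonzero entries and (ii) whenever a row has two nonzero entries they are equal or opposite. Linear independence of the columns of the resulting alignment $N$ will then come essentially for free from type S condition (1).

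The key step analyzes type S condition 2(b). Recall $\Lambda = c\,\mathbf{1}^t + \Gamma B$, where the columns of $B$ are all vertices of the cube, so the columns of $\Lambda$ are exactly the vectors $c + \sum_{k \in J}\Gamma_k$ as $J$ ranges over every subset of the reaction index set. Fix a species (row) $i$ and write $d = D_{ii}c_i$ and $a_k = D_{ii}\Gamma_{ik}$, where $D$ is the positive diagonal matrix of 2(b). The requirement that $D\Lambda$ have entries in $\{-1,0,1\}$ says precisely that $d + \sum_{k\in J} a_k \in \{-1,0,1\}$ for every subset $J$; equivalently, the set of subset sums of the nonzero $a_k$ has at most three elements. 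I would then establish the elementary fact that a collection of $t$ nonzero reals whose subset sums take at most three values must have $t \le 2$, and that when $t=2$ the two reals are equal or negatives of each other. This follows by listing the positive entries first, whose increasing partial sums $0 < a_{i_1} < a_{i_1}+a_{i_2} < \cdots$ already force at least (number of positive entries)$+1$ distinct subset sums, together with a one-line check that $\{0,a_1,a_2,a_1+a_2\}$ collapses to three values only when $a_1 = a_2$ or $a_1 = -a_2$. Since $D_{ii} > 0$, both conclusions transfer verbatim to the unscaled entries $\Gamma_{ik}$: each row of $\Gamma$ has at most two nonzero entries, and a row with two nonzeros has them equal or opposite.

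These are exactly the hypotheses of Lemma~\ref{lem:cube type to aligned} (at most two nonzero entries per row is immediate, and a shared species is a row in which two reactions are simultaneously nonzero, so the sign dichotomy gives the matching/opposite condition). Applying the lemma yields $\Gamma = PN$ with $N$ aligned and $P$ having exactly one nonzero entry per row and no zero columns; taking the diagonal factor to be the identity puts this in the alignable form. It remains to verify that the alignment $N$ is cubical. For the row bound, the construction in Lemma~\ref{lem:cube type to aligned} merges coordinate-equivalent rows of $\Gamma$ into single rows of $N$ with the same support, so each row of $N$ inherits the at-most-two-nonzeros property. For independence of the columns of $N$, type S condition (1) gives that the columns of $\Gamma$ are linearly independent, and since $\Gamma = PN$, any $x$ with $Nx = 0$ satisfies $\Gamma x = PNx = 0$, forcing $x = 0$. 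Hence $N$ is aligned with at most two nonzeros per row and linearly independent columns, i.e.\ cubical, which proves the theorem.

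The main obstacle I anticipate is the subset-sum step, since that is where the global "cube cone" description of type S (the bound $D\Lambda \in \{-1,0,1\}$ over all cube vertices) must be converted into the pointwise structural constraints on $\Gamma$ that feed Lemma~\ref{lem:cube type to aligned}. Everything after it is bookkeeping, and I expect that condition 2(a) and the strong-connectivity condition 3 are not needed for alignability itself; they should enter only when these networks are later upgraded to strongly monotone and weakly contractive systems.
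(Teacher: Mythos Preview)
Your proposal is correct and follows essentially the same approach as the paper: both arguments exploit the observation that the $\{-1,0,1\}$ constraint on $D\Lambda$ forces the subset sums of each row's nonzero entries to take at most three values, hence each row has at most two nonzeros that are equal or opposite, and then invoke Lemma~\ref{lem:cube type to aligned}. Your write-up is in fact slightly cleaner than the paper's, since you apply Lemma~\ref{lem:cube type to aligned} directly to $\Gamma$ (bypassing the paper's intermediate rescalings $D$ and $D_a$) and you explicitly justify the linear independence of $N$'s columns, which the paper asserts without argument.
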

    
\begin{proof}
    Define $\Gamma' = D\Gamma$, $c' = Dc$, and $\Lambda' = D\Lambda = c'1^t + \Gamma'B$. Since $\Lambda'$ has entries only in $\{-1, 0, 1\}$, it follows that $c'$ must also consist of entries in $\{-1, 0, 1\}$, as $c'$ is simply the first column of $\Lambda'$.

    Next, define $P' = P D^{-1}$. Then
    \[
        \Lambda' P' = D\Lambda P D^{-1} = I,
    \]
    so $\Lambda'$ is right-invertible via a nonnegative matrix.

    Now observe that each row of $\Gamma'$ contains at most three distinct values, since $\Gamma'$ appears as the first few columns of $\Gamma'B = \Lambda' - c'1^t$, and $\Lambda'$ has entries only in $\{-1, 0, 1\}$. Hence, each row of $\Gamma'B$ also has at most three distinct values.

    Suppose, for contradiction, that a row of $\Gamma'$ has nonzero entries in three distinct columns, say columns $i$, $j$, and $k$. Then the columns of $\Gamma'B$ corresponding to $i$, $j$, $k$, $i+j$, $i+k$, $j+k$, and $i+j+k$ (i.e., linear combinations over $\{0,1\}$ of those columns) will yield at least four distinct values in that row: namely $0$, the individual entries, and their sums. This contradicts the assumption that each row of $\Gamma'B$ has at most three distinct values. Therefore, each row of $\Gamma'$ has at most two nonzero entries.

    Since $\Gamma' = D\Gamma$, the same holds for $\Gamma$: each row has at most two nonzero entries.

    Next, note that all entries in $\Gamma'$ lie in the set $\{-2, -1, 0, 1, 2\}$. This follows from the fact that $\Gamma'B = \Lambda' - c'1^t$ and both $\Lambda'$ and $c'$ have entries in $\{-1, 0, 1\}$. Furthermore, any two nonzero entries in a given row must differ by at most 2, as this holds for entries of $\Lambda'$ and hence for $\Gamma'B$.

    Suppose now that a row of $\Gamma'$ contains an entry $a = \pm 2$ in some column $i$. Then $a$ must be the only nonzero entry in that row. Otherwise, if another column $j$ has a nonzero entry $b$, then among the values $\{0, a, b, a + b\}$, there must be two differing by more than 2 in magnitude, again contradicting the constraint on entry differences in $\Gamma'B$.

    Define a diagonal matrix $D_a$ as follows: set the $i$-th diagonal entry of $D_a$ to $1/2$ if the $i$-th column of $\Gamma'$ contains an entry of magnitude 2 in any row; otherwise, set it to $1$. Then $D_a \Gamma'$ has entries only in $\{-1, 0, 1\}$ and each row has at most two nonzero entries.

    By Lemma~\ref{lem:cube type to aligned}, this implies that $D_a \Gamma' = P_a N$ for some matrix $P_a$ with exactly one nonzero entry per row and column and some aligned matrix $N$ with linearly independent columns and at most two nonzero entries per row.

    Combining all steps, we obtain the factorization
    \[
        \Gamma = D^{-1} D_a^{-1} P_a N,
    \]
    showing that $\Gamma$ is alignable with cubical alignment.
\end{proof}

\begin{thm}
\label{thm:cubicals are weakly contractive}
    Cubical networks, and in particular Type S networks, are weakly contractive.
\end{thm}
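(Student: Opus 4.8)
The plan is to combine the monotonicity already established for cubical networks with the monotonicity-to-contractivity bridge of Theorem~\ref{thm:strongly_monotone_lift_to_weakly contractive_general_2}, and then to reduce the Type~S case to the cubical case through the alignment of Theorem~\ref{thm:type S align}. First I would treat a cubical network $\Gamma$ whose $\mathcal{RI}$-graph is strongly connected. The proof of Theorem~\ref{thm:cubical network cones} already produces a viable set $A = \{v - \sum_{i\in J}\Gamma_i \mid J \subseteq \{1,\dots,n\}\}$ whose lift is a cubical cone and with respect to which the lifted system is monotone. Since every element of $A$ is obtained by adding or subtracting reaction vectors, $\operatorname{AffHul}(A)$ is a translate of $\operatorname{Im}(\Gamma)$, so by Lemma~\ref{lem_viable_at_least_stoich_dimension} it has exactly the dimension of the stoichiometric compatibility classes. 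With strong connectivity in hand, Theorem~\ref{thm:output_M_to_strong_monotone} upgrades monotonicity to strong monotonicity of the lift with respect to the lifted cone $K'$ generated by the lifts of the extreme points of $\operatorname{conv}(A)$.

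Next I would apply Theorem~\ref{thm:strongly_monotone_lift_to_weakly contractive_general_2} with $S = \operatorname{Im}(\Gamma)$. The two hypotheses to verify are $K' \cap S = \{0\}$ and $\operatorname{span}(K') \supseteq S$, and both follow from the bookkeeping of the lift: every generator of $K'$ has last coordinate equal to $1$, so any element of $K'$ has nonnegative last coordinate, vanishing only at the origin, whereas $S$ lies in the hyperplane where the last coordinate is $0$; meanwhile differences of generators $(v_i,1)-(v_j,1)=(v_i-v_j,0)$ span $\operatorname{Im}(\Gamma)$ precisely because $A$ is generated by $\pm\Gamma_i$ operations. Theorem~\ref{thm:strongly_monotone_lift_to_weakly contractive_general_2} then yields a norm on the relative interior of each stoichiometric compatibility class under which the cubical network $\Gamma$ is weakly contractive.

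Finally, for a Type~S network I would invoke Theorem~\ref{thm:type S align} to write $\Gamma = PND$ with $N$ a cubical alignment. Right-multiplication by $D^{-1}$ leaves the dynamics unchanged, and the alignment preserves the reaction-sharing structure among columns, so strong connectivity of $G_{\Gamma,\partial R}$ (condition~(3) of the Type~S definition) passes through Lemma~\ref{lem:DSR strong to RI strong} to strong connectivity of the $\mathcal{RI}$-graph of both $\Gamma$ and $N$. Applying the cubical case to $N$ gives strong monotonicity of its lift with respect to a cubical cone $K$; Corollary~\ref{cor:align to general} (via Theorem~\ref{thm:P*gamma network strongly monotone}) then transfers this strong monotonicity from $N$ to $PN$, hence to $\Gamma$, with respect to the cone $PK$. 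A second application of Theorem~\ref{thm:strongly_monotone_lift_to_weakly contractive_general_2} delivers weak contractivity.

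The step I expect to be the main obstacle is verifying the two cone conditions of Theorem~\ref{thm:strongly_monotone_lift_to_weakly contractive_general_2} in the Type~S setting, where the relevant cone is the image $PK$ rather than a cube directly. One must confirm that applying $P$ preserves both transversality to the stoichiometric subspace and the spanning property, that is, that $PK$ meets $\operatorname{Im}(P\Gamma)$ only at the origin while $\operatorname{span}(PK)$ still contains $\operatorname{Im}(P\Gamma)$. Because $P$ has a single nonzero entry per row and no zero columns, it is injective on the relevant subspaces and commutes with the last-coordinate structure of the lift, so these properties should survive; but this is precisely where the argument must be checked carefully rather than merely quoted, and it is the natural place for a subtle dimension error to hide.
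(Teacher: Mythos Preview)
Your proposal is correct and follows essentially the same route as the paper: Theorem~\ref{thm:cubical network cones} for the viable set, Theorem~\ref{thm:output_M_to_strong_monotone} for strong monotonicity of the lift, Lemma~\ref{lem:DSR strong to RI strong} to obtain $\mathcal{RI}$-graph strong connectivity in the Type~S case, and Theorem~\ref{thm:strongly_monotone_lift_to_weakly contractive_general_2} to conclude. You are more explicit than the paper's terse proof in verifying the cone hypotheses of Theorem~\ref{thm:strongly_monotone_lift_to_weakly contractive_general_2} and in spelling out the Type~S-to-cubical reduction via Theorem~\ref{thm:type S align} and Corollary~\ref{cor:align to general}; the obstacle you flag is not genuine, since the lift's extra coordinate is untouched by $P$ and $P$ is injective, so both cone conditions transfer directly to $PK$.
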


\begin{proof}
 First by Theorem \ref{thm:cubical network cones} and Lemma \ref{thm:lift viable for monotone} we know our system is monotone with respect to a cone. By Theorem \ref{thm:output_M_to_strong_monotone} our system is strongly monotone with respect to the lifted cone. By Lemma \ref{lem:DSR strong to RI strong} we know they have a strongly connected \( \mathcal{RI} \)-graph. Now we can use Theorem \ref{thm:strongly_monotone_lift_to_weakly contractive_general_2} to conclude the systems are weakly contractive and we are done.
 
\end{proof}

The following is an example of a cubical network which is not a type S network (since type S matrices cannot have a row including entries with a ratio of 3:1, this follows from the proof of Theorem \ref{thm:type S align} since in the proof $\Gamma' = D \Gamma $ can only have entries of $\{-2,-1,0,1,2\}$, from which it follows row entries of $\Gamma$ cannot be 3:1), and thus the network is not one of those considered in \cite{BANAJI20131359}. As far as the authors are aware, this network is not contained in any previously known classes of globally convergent networks. Suppose we have the stoichiometric matrix
$$\Gamma = \begin{bmatrix}
    -3 & 0 & 0 & 1 \\
    1 & -1 & 0 & 0 \\
    1 & 0 & -1 & 0 \\
    0 & 1 & 1 & 0 \\
    0 & 0 & 0 & -1
\end{bmatrix}$$

We can factor our matrix as:
\[
\Gamma = PND = 
    \begin{bmatrix}
    3 & 0 & 0 & 0 & 0\\
    0 & 1 & 0 & 0 & 0 \\
    0 & 0 & 1 & 0 & 0 \\
    0 & 0 & 0 & 1 & 0 \\
    0 & 0 & 0 & 0 & 3
\end{bmatrix}
\begin{bmatrix}
    -1 & 0 & 0 & 1 \\
    1 & -1 & 0 & 0 \\
    1 & 0 & -1 & 0 \\
    0 & 1 & 1 & 0 \\
    0 & 0 & 0 & -1
\end{bmatrix}
\begin{bmatrix}
    1 & 0 & 0 & 0 \\
    0 & 1 & 0 & 0 \\
    0 & 0 & 1 & 0 \\
    0 & 0 & 0 & 1/3
\end{bmatrix}.
\]

\subsection{Connection to a Class of Simplicial Cones}
\label{sec:type_I_networks}

A class of networks monotone with respect to a class of simplicial cones is introduced in \cite{doi:10.1137/120898486}. We will refer to these networks as \textit{type I} networks. More precisely, we have the following definition (taken from \cite{doi:10.1137/120898486}):

\begin{define}
\label{def:type I networks}
    A \textit{type I} network is a network satisfying the following conditions:

    \begin{enumerate}
        \item We can write $\Gamma = \Lambda \Theta$ where
        \begin{enumerate}
            \item The matrix $\Lambda$ has no 0 columns, and each row has exactly one nonzero entry.
            \item The matrix $\Theta$ satisfies that each column has at most 2 nonzero entries, and nonzero entries in the same column have opposite signs. Also, $\mbox{ker} (\Theta^t)$ is one-dimensional and contains a positive vector.
        \end{enumerate}
        \item The graph $G_{\Gamma, \partial R}$ is strongly connected
    \end{enumerate}
\end{define}

\begin{lem}
    All type I networks are alignable, and their alignment is type C.
\end{lem}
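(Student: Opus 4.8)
The plan is to produce the factorization $\Gamma = PND'$ directly from the given decomposition $\Gamma = \Lambda\Theta$, using the strictly positive conservation vector supplied by the hypothesis that $\ker(\Theta^t)$ is one-dimensional and contains a positive vector. Write $w>0$ for a generator of $\ker(\Theta^t)$, so that $\Theta^t w = 0$, i.e. $\sum_i \Theta_{ij} w_i = 0$ for every column index $j$. The role of $w$ is to rescale rows of $\Theta$ so that each column becomes a clean $\pm(e_a-e_b)$, which is exactly the shape needed for a type C alignment.

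First I would pin down the structure of $\Theta$. Each column has at most two nonzero entries, of opposite sign; I claim each has exactly two. A column with a single nonzero entry $\Theta_{ij}$ would give $(\Theta^t w)_j = \Theta_{ij} w_i \neq 0$ (as $w_i>0$), contradicting $w\in\ker(\Theta^t)$, while a genuinely zero column is a trivial reaction that we discard. Likewise no row of $\Theta$ is identically zero: a zero row $i$ forces $\Theta^t e_i = 0$, so $e_i\in\ker(\Theta^t)$, and since $e_i$ is not a scalar multiple of the strictly positive $w$, this contradicts one-dimensionality of the kernel. Hence every column of $\Theta$ has exactly two nonzero entries of opposite sign, and $\Theta$ has no zero rows.

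The main construction is then to rescale rows by $w$. Set $\tilde\Theta = \diag(w)\Theta$, so $\tilde\Theta_{ij} = w_i\Theta_{ij}$. For a column $j$ supported on rows $\{a,b\}$, the relation $\Theta_{aj}w_a + \Theta_{bj}w_b = 0$ becomes $\tilde\Theta_{aj} = -\tilde\Theta_{bj}$: the two nonzero entries are exact negatives. Thus I can write $\tilde\Theta = ND'$, where $D'$ is the positive diagonal matrix whose $j$-th entry is $|\tilde\Theta_{aj}|$ and each column of $N$ equals $\pm(e_a - e_b)$, so $N$ has entries in $\{-1,0,1\}$. Substituting $\Theta = \diag(w)^{-1}ND'$ gives $\Gamma = \Lambda\,\diag(w)^{-1}\,N\,D'$, and I set $P = \Lambda\,\diag(w)^{-1}$. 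Since right multiplication by a positive diagonal matrix only rescales columns, $P$ retains exactly one nonzero entry per row and has no zero columns (hence no zero rows), so $P$ meets the requirements in the definition of alignable, while $D'$ is the required positive diagonal factor.

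It remains to check that $N$ is type C, which by definition includes being aligned. The type C conditions are immediate: entries lie in $\{-1,0,1\}\subset\{-2,\dots,2\}$; each column has exactly two entries $\pm1$, giving $\ell_1$-norm $2$; and $N$ has no zero rows since $\Theta$ has none. For alignedness, condition (1) holds with the identity matrix. Conditions (2) and (3) are the delicate point, and this is where I expect the real work. For (2), two columns $e_a-e_b$ and $e_c-e_d$ with distinct supports share at most one index, producing at most one same-sign and at most one opposite-sign coincidence; the problem is that two columns of $\Theta$ with the \emph{same} support are forced by the kernel relation to be proportional, which would yield equal or opposite columns of $N$ and violate (2). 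I would dispose of these redundant (parallel or reverse) reactions by merging them, which preserves the admissible kinetics and the achievable vector fields, so without loss of generality the supports, and hence the columns of $N$, are pairwise distinct and non-opposite; alternatively one notes that such redundancy is excluded by the standing assumptions on the network. Condition (3) then follows from strong connectivity of the $\mathcal{RI}$-graph (inherited from $G_{\Gamma,\partial R}$ via Lemma~\ref{lem:DSR strong to RI strong}): each reaction shares a species with another, and because $\Gamma_{sj}\neq 0$ iff $\Theta_{\ell(s),j}\neq 0$ under the row map induced by $\Lambda$, this shared species corresponds to a shared row of $N$, so at most one of the two support rows of each column is private to it. Assembling these facts shows $\Gamma = PND'$ is an alignment with $N$ of type C.
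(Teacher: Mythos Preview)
Your approach is essentially identical to the paper's: both rescale the rows of $\Theta$ by the positive generator $w$ of $\ker(\Theta^t)$ so that each column has two entries that are exact negatives, and then pull out a positive diagonal on the right to obtain a $\{-1,0,1\}$ matrix with $\ell_1$-column-norm $2$. You are in fact more thorough than the paper, which simply declares the resulting matrix to be an alignment without checking aligned conditions (2) and (3); your observations that duplicate-support columns must be merged (or assumed absent) and that connectivity of the $\mathcal{RI}$-graph forces each column of $N$ to share at least one row with another are correct and fill a gap the paper leaves implicit.
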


\begin{proof}
    Note from Condition 1b that the matrix $\Theta$ in $\Gamma = \Lambda \Theta$ must contain at most two nonzero entries of different sign in each column. Next factor $\Theta = D_1 \Theta'$ where $D_1$ is a diagonal matrix with entries the same as $\ker(\Theta^t)$. Note by Condition 1b that each entry in a column of $\Theta'$ must now have the same absolute value, so we can write $\Theta' = \Theta'' D_2$ where $D_2$ is a diagonal matrix with positive entries, chosen so that $\Theta''$ contains only $-1,1$ or $0$. Note that due to the kernel having a positive vector, each column contains 2 nonzero entries or is a zero column. Thus $\Theta''$ is our desired alignment, satisfying the conditions to be type C.
\end{proof}

\begin{thm}
    All type I networks are weakly contractive.
\end{thm}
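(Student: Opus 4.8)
The plan is to reduce the statement, via the preceding lemma, to the type C case and then route the conclusion through the strong-monotonicity-to-weak-contractivity bridge, exactly as in the proof of Theorem~\ref{thm:cubicals are weakly contractive}. First I would invoke the preceding lemma to write $\Gamma = PND$ with $N$ a type C alignment, $P$ having exactly one nonzero entry per row and no zero columns, and $D$ a positive diagonal matrix. Since right-multiplication by $D$ produces an equivalent reaction network (as already noted in the proof of Corollary~\ref{cor:align to general}), $D$ may be discarded, and it suffices to treat $PN$.

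Next I would verify that the type C alignment $N$ inherits a strongly connected $\mathcal{RI}$-graph. By condition~(2) in the definition of a type I network, the graph $G_{\Gamma,\partial R}$ is strongly connected, so Lemma~\ref{lem:DSR strong to RI strong} gives that the $\mathcal{RI}$-graph of $\Gamma$ is strongly connected. Because $P$ has a single nonzero entry in each row and no zero column, left-multiplication by $P$ only relabels and duplicates species coordinates without altering which pairs of reactions (columns) share a species; the reversibility labels attach to reactions and are likewise unchanged. Hence the $\mathcal{RI}$-graph of $N$ is isomorphic to that of $\Gamma$ and is strongly connected.

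With $N$ type C and strongly connected, Theorem~\ref{thm:type C strongly monotone} shows that the lift of $N$ is strongly monotone with respect to a cross-polytope or simplicial cone $K$. I would then transfer this to $\Gamma$ using Corollary~\ref{cor:align to general} (equivalently Theorem~\ref{thm:P*gamma network strongly monotone}): since $N$ is strongly monotone with respect to the lifted cone $K$, the network $PN$, and hence $\Gamma$, is strongly monotone with respect to the lifted cone $PK$, which is again a lift. Finally, because $PK$ is a lift cone it meets the stoichiometric subspace $S$ only at the origin and, by the viable-set dimension guarantee of Lemma~\ref{lem_viable_at_least_stoich_dimension}, it spans a space containing $S$; thus the hypotheses $K \cap S = \{0\}$ and $\operatorname{span}(K) \supseteq S$ of Theorem~\ref{thm:strongly_monotone_lift_to_weakly contractive_general_2} hold, and that theorem yields a norm under which the dynamics on the relative interior of each stoichiometric compatibility class are weakly contractive. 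This is precisely weak contractivity of the reaction network.

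The main obstacle I anticipate is the bookkeeping in the second step: carefully checking that strong connectivity of the $\mathcal{RI}$-graph really does pass between $\Gamma$ and its alignment $N$ under the coordinate-merging action of $P$, including the correct treatment of reversible versus irreversible reactions. A lesser point requiring care is confirming the transversality and spanning hypotheses of Theorem~\ref{thm:strongly_monotone_lift_to_weakly contractive_general_2} for the transferred cone $PK$, although these follow automatically from the lift construction once the dimension of the underlying viable set is pinned down.
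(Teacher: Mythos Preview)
Your proposal is correct and follows essentially the same route as the paper. The paper's own proof is a two-line sketch that invokes Lemma~\ref{lem:DSR strong to RI strong} for strong connectivity of the $\mathcal{RI}$-graph and then appeals to Corollary~\ref{cor:type C is weakly contractive}, leaving implicit exactly the alignment transfer (via Corollary~\ref{cor:align to general} and Theorem~\ref{thm:P*gamma network strongly monotone}) and the final application of Theorem~\ref{thm:strongly_monotone_lift_to_weakly contractive_general_2} that you have spelled out; your explicit check that the $\mathcal{RI}$-graph of $N$ inherits strong connectivity from $\Gamma$ is a piece of bookkeeping the paper suppresses but does require.
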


\begin{proof}
    By Lemma \ref{lem:DSR strong to RI strong} we know they have a strongly connected \( \mathcal{RI} \)-graph. Now weak contractivity follows immediately from Corollary \ref{cor:type C is weakly contractive}.
\end{proof}

We also have the following equivalence:

\begin{lem}
    Suppose we have an aligned matrix with at most two nonzero entries per column. Then it is type I if and only if it has linearly dependent rows. 
\end{lem}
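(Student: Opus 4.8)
The plan is to prove both implications by relating the left null space of $N$ to that of the factor $\Theta$ in a type I factorization $N = \Lambda\Theta$. The key observation is that, because $N$ is aligned with at most two nonzero entries per column, a type I factorization can always be taken with $\Lambda$ a signed diagonal matrix $\Sigma = \diag(\pm 1)$; the sole purpose of $\Sigma$ is to flip the signs of certain rows so that every two-entry column of $\Theta = \Sigma N$ has entries of opposite sign. Thus the problem reduces to understanding when such a sign pattern can be achieved together with the requirement that $\ker(\Theta^t)$ be one-dimensional and contain a positive vector.

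For the forward direction, assume $N$ is type I, so $N = \Lambda\Theta$ with $\ker(\Theta^t)$ containing a positive vector $w$; that is, $\Theta^t w = 0$. I would produce a left null vector of $N$ by solving $\Lambda^t v = w$: since each row of $\Lambda$ has exactly one nonzero entry and $\Lambda$ has no zero columns, for each column index $k$ of $\Lambda$ one may select a row $i$ with $\Lambda_{ik}\neq 0$, set $v_i = w_k/\Lambda_{ik}$, and set the remaining free coordinates to zero, yielding a solution $v \neq 0$. Then $N^t v = \Theta^t\Lambda^t v = \Theta^t w = 0$, so the rows of $N$ are linearly dependent. This direction uses only that $\ker(\Theta^t)$ is nontrivial, not its dimension.

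For the reverse direction, assume the rows of $N$ are linearly dependent, so there is $w \neq 0$ with $w^t N = 0$. The main work is to show $w$ has full support, equivalently that every column of $N$ has exactly two nonzero entries. If some column had a single nonzero entry in row $a$, then $w_a = 0$; and for any column with two nonzero entries in rows $a,b$, the relation $w_a (N_j)_a + w_b (N_j)_b = 0$ forces $w_b = 0$ once $w_a = 0$. Propagating this along the species-sharing (row) graph — which is connected because the $\mathcal{RI}$-graph is strongly connected and every species participates in a reaction — would force $w = 0$, a contradiction. This is exactly the propagation argument used in the proof of Lemma~\ref{lem:type C simplicial iff dependent rows}. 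Hence $w$ has full support and every column of $N$ has two nonzero entries. I then set $\Sigma = \diag(\operatorname{sign}(w_i))$, $\Lambda = \Sigma$, and $\Theta = \Sigma N$, so that $N = \Lambda\Theta$ since $\Sigma^2 = I$. For a column with nonzeros at $a,b$, the identity $w_a(N_j)_a = -w_b(N_j)_b$ gives $\operatorname{sign}(\Theta_{aj}) = -\operatorname{sign}(\Theta_{bj})$, so every column of $\Theta$ has two opposite-sign entries; moreover $|w| > 0$ satisfies $|w|^t\Theta = (\Sigma|w|)^t N = w^t N = 0$, so $\ker(\Theta^t)$ contains a positive vector. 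Finally, reading $u^t\Theta = 0$ column by column yields $u_a = u_b$ whenever rows $a,b$ share a column, so $u$ is constant on each connected component of the row graph; by connectivity $\ker(\Theta^t) = \operatorname{span}(\mathbf 1)$ is one-dimensional. Together with the standing strong-connectivity hypothesis this verifies all conditions of Definition~\ref{def:type I networks}, so $N$ is type I.

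The main obstacle is the full-support claim in the reverse direction: ruling out single-nonzero columns and showing $w$ never vanishes. This is where connectivity of the underlying network is essential, and it is the step that must be argued carefully rather than by inspection; the remaining verifications (opposite signs, positivity of $|w|$, and one-dimensionality of $\ker(\Theta^t)$) then follow mechanically from the sign bookkeeping above.
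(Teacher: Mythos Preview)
Your proof is correct and follows essentially the same approach as the paper: both directions hinge on the propagation argument from Lemma~\ref{lem:type C simplicial iff dependent rows} to obtain full support of the left null vector, and both construct $\Lambda$ as the diagonal sign matrix $\diag(\operatorname{sign}(w))$. Your version is in fact more thorough than the paper's, which omits the verification that $\ker(\Theta^t)$ is exactly one-dimensional and treats the forward implication as immediate from the definition; your explicit surjectivity argument for $\Lambda^t$ and your connectivity-based proof that $\ker(\Theta^t)=\operatorname{span}(\mathbf 1)$ fill in details the paper leaves implicit.
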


\begin{proof}

    By the proof of Lemma \ref{lem:type C simplicial iff dependent rows} if a network has linearly independent rows then each column has exactly two nonzero entries. If we have a vector $v$ such that $v^t \Gamma = 0$, simply take $\Lambda$ to be the matrix with diagonal values equal to the signs of the values of entries in $v$, and 0's elsewhere. Then $(\Lambda)(\Lambda^{-1} \Gamma)$ is the desired factorization to show that $\Gamma$ is type $I$.

    If the network is type $I$, it is given in the conditions for a type $I$ matrix that is has dependent rows.
\end{proof}

\subsection{Reaction Coordinates}
\label{sec:react_coordinates}

We have the following result:

\begin{thm}[Theorem 7 from \cite{ANGELI2007598}]
\label{thm:postive vect of fluxes equilibrium}
Suppose we have a persistent reaction network consisting only of irreversible reactions. Then there exists a vector \( v \) with strictly positive entries such that \( \Gamma v = 0 \).
\end{thm}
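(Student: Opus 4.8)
The plan is to construct the desired positive vector as a limit of time-averaged reaction fluxes along a single trajectory that starts in the interior of the positive orthant. First I would fix an initial state $x_0 \in \operatorname{Int}(\mathbb{R}^n_{\geq 0})$ and follow its forward trajectory $x(t) = \phi_t(x_0)$. Because the network is persistent, its stoichiometric compatibility classes are compact, so the entire forward orbit stays in a fixed compact set $C$; in particular the orbit is precompact and $x(t)$ approaches its nonempty, compact omega-limit set $\omega(x_0)$. Persistence also gives $\omega(x_0) \cap \partial \mathbb{R}^n_{\geq 0} = \emptyset$, so $\omega(x_0)$ is a compact subset of the open interior and hence sits at some positive distance $2\delta$ from the boundary. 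Since $x(t)$ converges to $\omega(x_0)$, there is a time $T_0$ beyond which $\operatorname{dist}(x(t),\partial\mathbb{R}^n_{\geq 0}) > \delta$ for all $t \ge T_0$.

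Next I would introduce the averaged flux vectors
\[
v_T := \frac{1}{T}\int_0^T R(x(t))\,dt \in \mathbb{R}^m .
\]
Integrating $\dot x = \Gamma R(x)$ gives $\Gamma v_T = (x(T) - x(0))/T$, and since $x$ stays in the bounded set $C$ the right-hand side tends to $0$ as $T \to \infty$. The crucial positivity input comes from the kinetic assumptions for irreversible reactions: at any interior point every reaction has all of its reactants present, so each rate satisfies $R_i(x) > 0$ there. On the compact slab $K_\delta := \{\, x \in C : \operatorname{dist}(x,\partial\mathbb{R}^n_{\geq 0}) \ge \delta \,\}$, which lies in the interior, continuity then yields a uniform lower bound $R_i(x) \ge c_i > 0$ for each $i$. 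Because $x(t) \in K_\delta$ for all $t \ge T_0$, splitting the integral at $T_0$ (the contribution of $[0,T_0]$ is bounded and divided by $T\to\infty$) shows $\liminf_{T\to\infty}(v_T)_i \ge c_i > 0$, while boundedness of $R$ on $C$ keeps $\{v_T\}$ bounded above.

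Finally I would extract a convergent subsequence $v_{T_k} \to v$, which exists since the $v_{T_k}$ all lie in a fixed compact box $\prod_i[0,M_i]$. The limit satisfies $v_i \ge c_i > 0$ entrywise, hence $v$ is strictly positive, and $\Gamma v = \lim_k \Gamma v_{T_k} = 0$. This $v$ is exactly the required positive element of $\ker \Gamma$.

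The main obstacle---and the only place persistence is genuinely used---is passing from the qualitative statement ``omega-limit sets avoid the boundary'' to a quantitative, uniform-in-time lower bound on the fluxes. One must be careful that the trajectory could approach the boundary transiently for small $t$, which is precisely why the argument controls only the tail $t \ge T_0$ and relies on the averaging to wash out the bounded contribution from $[0,T_0]$. I would deliberately avoid trying to produce an interior equilibrium and setting $v = R(p)$, because persistence by itself does not rule out nontrivial recurrent behavior in $\omega(x_0)$; the time-averaging device sidesteps this difficulty and needs only that the orbit is eventually trapped in the interior.
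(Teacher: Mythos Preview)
The paper does not supply its own proof of this statement; it is quoted verbatim as Theorem~7 of \cite{ANGELI2007598} and used as a black box. Your argument is correct and is essentially the one given in that reference: fix an interior initial condition, use persistence (which in this paper's convention includes compactness of the stoichiometric class) to trap the tail of the orbit in a compact subset of $\operatorname{Int}(\mathbb{R}^n_{\ge 0})$, time-average the rate vector $R(x(t))$, and pass to a subsequential limit. Boundedness of the orbit gives $\Gamma v_T\to 0$, the uniform lower bound on each $R_i$ over the compact interior slab gives strict positivity of the limit, and boundedness of $R$ on $C$ gives the compactness needed to extract a subsequence.

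One small point you used without justification: the kinetic assumptions for irreversible reactions say only that $R_i=0$ when some reactant vanishes and that $\partial R_i/\partial x_j>0$ when $x_j$ is a reactant and all reactants are present; the conclusion $R_i(x)>0$ for $x$ in the open orthant is not stated directly but follows by integrating the strictly positive partial derivative along a segment from a boundary point where a single reactant is set to zero. This is routine, but worth a sentence since the rest of your argument hinges on it.
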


Recall that our definition of persistence includes the compactness of stoichiometric compatibility classes. Using the above theorem, we can prove the following:

\begin{lem}
Suppose we have a persistent reaction network \( \Gamma \) consisting only of irreversible reactions. Then the \( \mathcal{RI} \)-graph of this network is strongly connected.
\end{lem}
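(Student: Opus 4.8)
The plan is to attach two strictly positive vectors to $\Gamma$ and play them against a hypothetical ``sink'' in the $\mathcal{RI}$-graph. First, since the network is persistent and purely irreversible, Theorem~\ref{thm:postive vect of fluxes equilibrium} supplies a right-kernel flux $v$ with $v \gg 0$ and $\Gamma v = 0$. Second, I would extract a strictly positive \emph{conservation law} from compactness: persistence includes compactness of every stoichiometric compatibility class, and if some nonzero $d \in \operatorname{Im}(\Gamma)$ were nonnegative, then $S_0 + t d$ would stay in the class of an interior point $S_0$ for all $t \ge 0$, contradicting boundedness. Hence $\operatorname{Im}(\Gamma) \cap \mathbb{R}^n_{\ge 0} = \{0\}$, and by Stiemke's theorem of alternatives there exists $\ell \gg 0$ with $\ell^{T}\Gamma = 0$.

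Next I would argue by contradiction. If the $\mathcal{RI}$-graph is not strongly connected, its condensation is a nontrivial DAG, so it has a sink strongly connected component $F$: a proper, nonempty set of reactions with no outgoing edges leaving $F$. Unwinding the edge rule (all reactions are irreversible, so $\Gamma_i \to \Gamma_j$ means some species of $\Gamma_i$ is a reactant of $\Gamma_j$), ``no edge leaves $F$'' translates into the stoichiometric statement: no species occurring in an $F$-reaction is a reactant of any reaction outside $F$. Equivalently, every species touched by $F$ is consumed only within $F$. Setting $w := \sum_{j \in F} v_j \Gamma_j$, its support lies among the species touched by $F$, and for each such species $s$ the balance $(\Gamma v)_s = 0$ together with $(\Gamma_j)_s \ge 0$ for $j \notin F$ forces $w_s = -\sum_{j \notin F} v_j (\Gamma_j)_s \le 0$, so $w \le 0$ entrywise.

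To see $w \ne 0$, I would invoke the standing assumption that the R-graph is connected: since $F$ and $F^{c}$ are both nonempty, some species $s^{\ast}$ is shared across the boundary. As $s^{\ast}$ is touched by $F$ it is consumed only inside $F$, so in the outside reaction $R_b \in F^{c}$ where it appears it must occur as a \emph{product}, $(\Gamma_b)_{s^{\ast}} > 0$; with $v_b > 0$ this gives $w_{s^{\ast}} < 0$. Pairing with the conservation law yields $0 = \ell^{T} w < 0$ (using $\ell \gg 0$, $w \lneq 0$), the desired contradiction, so the $\mathcal{RI}$-graph is strongly connected.

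The main obstacle is the conservation-law step: correctly deducing $\operatorname{Im}(\Gamma)\cap\mathbb{R}^n_{\ge0}=\{0\}$ from compactness and applying the theorem of alternatives to produce $\ell \gg 0$. The remaining delicate point is the faithful translation of ``sink component'' into the claim that the species of $F$ are consumed only inside $F$, and the verification that the boundary species $s^{\ast}$ is a product (not a reactant) of its outside reaction, which is precisely where forward-closedness of $F$ is used.
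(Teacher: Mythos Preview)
Your argument is correct. The paper's proof follows the same skeleton---assume non-strong-connectivity, pass to the condensation DAG, pick an extremal component, and combine the positive flux $v$ from Theorem~\ref{thm:postive vect of fluxes equilibrium} with compactness---but with two differences worth noting.

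First, the paper selects a \emph{source} component (no incoming edges) rather than a sink. For a source $\Gamma'$, the reactant species of $\Gamma'$-reactions do not appear in any outside reaction, so on those species the partial sum $w=\sum_{j\in\Gamma'} v_j\Gamma_j$ agrees with $\Gamma v=0$; on all other species the entries of $w$ are nonnegative, and an outgoing edge forces a strictly positive entry. Thus the paper obtains $0\ne w\in\operatorname{Im}(\Gamma)\cap\mathbb{R}^n_{\ge 0}$ and invokes compactness directly. Your sink version produces the mirror inequality $w\lneq 0$.

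Second, and this is the main comment: your detour through Stiemke's lemma to manufacture a strictly positive conservation law $\ell$ is unnecessary. Once you have established $w=\sum_{j\in F}v_j\Gamma_j\lneq 0$, you already have $-w\in\operatorname{Im}(\Gamma)\cap\mathbb{R}^n_{\ge 0}\setminus\{0\}$, which contradicts compactness immediately---exactly the implication you used as the \emph{hypothesis} of Stiemke. So the alternatives theorem, while correct, is circling back to a fact you had already proved. The paper's version simply stops there. Your proof stands; it just carries an extra step that can be excised.
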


\begin{proof}
Assume, for contradiction, that the \( \mathcal{RI} \)-graph is not strongly connected. Consider the directed acyclic graph formed by collapsing each strongly connected component of the \( \mathcal{RI} \)-graph into a single node. Select a component with only outgoing edges and no incoming edges. Let \( \Gamma' \) denote the subnetwork consisting of the reactions within this component.

By Theorem~\ref{thm:postive vect of fluxes equilibrium}, there exists a vector \( v \gg 0 \) such that \( \Gamma v = 0 \). Since the reactions in \( \Gamma' \) do not share reactants with any other reactions outside the component, it follows that \( \Gamma' v = 0 \) for all species that appear as reactants in \( \Gamma' \). However, because the component has outgoing edges, there must be species in \( \Gamma' \) that appear only as products. As a result, \( \Gamma' v \) is a vector with nonnegative entries, at least one of which is strictly positive.

This contradicts the assumption that stoichiometric compatibility classes are compact, since \( \Gamma' v \neq 0 \) would allow unbounded accumulation of those species. Therefore, the \( \mathcal{RI} \)-graph must be strongly connected.
\end{proof}

Finally, note that every reaction network is equivalent to one consisting only of irreversible reactions, and the \( \mathcal{RI} \)-graphs of the two are strongly connected if and only if each other is. Thus, in general, persistence implies that the \( \mathcal{RI} \)-graph is strongly connected.

We will need a graphical definition:

\begin{define}
    We say our R-graph satisfies the \textit{signed loop} property if every edge in the graph has a sign, and every loop in the graph has an even number of positive edges.
\end{define}

It is shown in \cite{Angeli2010-hj} that this is equivalent to being monotone with respect to an orthant cone in reaction coordinates. We will not need the definition of reaction coordinates to demonstrate that all such reversible networks are weakly contractive, though roughly speaking, reaction coordinates keep track of the ``extent of reaction" instead of the amount of each species. This gives rise to an alternative dynamical system. 

Our definition of the signed loop property differs from \cite{Angeli2010-hj} in that our usage of signs is switched and we do not allow multiple edges between our nodes (our edges without a sign correspond to nodes that would have had two edges between them, one positive and one negative, per the definition form \cite{Angeli2010-hj}).

We will prove all the reaction networks that are monotone in reaction coordinates with respect to an orthant cone are monotone in species coordinates as well. We break the proof into two cases, depending on whether the reaction vectors are linearly independent.

\begin{lem}
\label{lem:reaction_coords_independent}
Let a reaction network be monotone with respect to an orthant cone in reaction coordinates (i.e., it satisfies the signed loop property). Assume the network has independent reaction vectors. Then it is also monotone with respect to a cone whose extreme rays are precisely the reaction vectors of the network, each included with a chosen sign.
\end{lem}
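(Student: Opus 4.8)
The plan is to choose a sign $\epsilon_i\in\{+1,-1\}$ for each reaction vector $\Gamma_i$, form the cone $K=\cone\{\epsilon_1\Gamma_1,\dots,\epsilon_m\Gamma_m\}$, and verify that every single-reaction subsystem is monotone with respect to $K$ using Corollary~\ref{lem:reac_vec_extreme ray}. Because the $\Gamma_i$ are linearly independent, $K$ is a simplicial, pointed cone whose extreme rays are exactly the signed reaction vectors, and it is full-dimensional inside the invariant subspace $\im(\Gamma)$; monotonicity of the full network then follows from Lemma~\ref{lem:one_reac_makes_strict_monotone}(1).

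First I would construct the signs. The signed loop property guarantees that every edge of the R-graph carries a sign and that every cycle contains an even number of positive edges. I want $\epsilon$ to satisfy $\epsilon_i\neq\epsilon_j$ across each positive edge and $\epsilon_i=\epsilon_j$ across each negative edge; reading a positive edge as a ``color flip'' and a negative edge as ``no flip,'' a consistent assignment around any cycle exists precisely when the number of positive edges in that cycle is even. Since the R-graph is connected, I would fix a spanning tree, assign an arbitrary sign to a root, and propagate the flips along tree edges; the even-positive-loop condition ensures the assignment is consistent across every non-tree edge as well. This balance argument (essentially Harary's theorem for signed graphs) is the crux of the proof and the step I expect to require the most care.

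Next I would verify the hypotheses of Corollary~\ref{lem:reac_vec_extreme ray} for a fixed reaction $\Gamma_i$, writing $v_j=\epsilon_j\Gamma_j$ for the extreme rays of $K$. For the first hypothesis, if $\Gamma_j$ shares no species with $\Gamma_i$ then $v_j\in Q_2(\Gamma_i)\subseteq Q_1(\Gamma_i)\cap Q_1(-\Gamma_i)$, while if they share species the edge has a sign, so $(\Gamma_i)_l(\Gamma_j)_l$ has constant sign over the shared coordinates $l$; hence $v_j\in Q_1(\Gamma_i)\cup Q_1(-\Gamma_i)$ in every case. For the uniqueness hypothesis, note $\Gamma_i=\epsilon_i v_i$ is $\pm$ an extreme ray of $K$, and $v_i$ lies in $Q_1^+(\Gamma_i)$ when $\epsilon_i=+1$ (resp.\ $Q_1^-(\Gamma_i)$ when $\epsilon_i=-1$). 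For any other $v_j$ sharing species with $\Gamma_i$, the sign rule gives $\epsilon_i\epsilon_j(\Gamma_i)_l(\Gamma_j)_l<0$ on shared coordinates, so $(v_j)_l(\Gamma_i)_l$ has sign $-\mathrm{sign}(\epsilon_i)$, placing $v_j$ in the region opposite to $v_i$; the remaining $v_j$ lie in $Q_2(\Gamma_i)$. Thus $\Gamma_i$ (resp.\ $-\Gamma_i$) is the unique extreme ray in $Q_1^+(\Gamma_i)$ (resp.\ $Q_1^-(\Gamma_i)$), and Corollary~\ref{lem:reac_vec_extreme ray} applies.

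Finally, applying this to each reaction in turn shows every single-reaction subsystem is monotone with respect to the common cone $K$, so Lemma~\ref{lem:one_reac_makes_strict_monotone}(1) yields monotonicity of the full network with respect to $K$, whose extreme rays are exactly the signed reaction vectors, as claimed. One bookkeeping point to confirm along the way is that $K$ need only be proper relative to the invariant subspace $\im(\Gamma)$, which is exactly what the restricted form of Theorem~\ref{thm:monotone} permits.
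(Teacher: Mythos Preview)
Your proposal is correct and follows essentially the same approach as the paper: both construct the sign assignment by propagating along the connected R-graph (you via a spanning tree, the paper via iterative adjacency), use the even-positive-cycle condition to guarantee consistency, and then verify Corollary~\ref{lem:reac_vec_extreme ray} for each reaction. The only cosmetic difference is that the paper frames the verification as a proof by contradiction (a violating pair of extreme rays would yield an odd-positive cycle), whereas you compute the sign of $(v_j)_l(\Gamma_i)_l$ directly and invoke Lemma~\ref{lem:one_reac_makes_strict_monotone}(1) explicitly at the end.
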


\begin{proof}
We construct a cone \( K \) whose extreme rays are (positive or negative) scalar multiples of the reaction vectors, and for which the conditions of Corollary~\ref{lem:reac_vec_extreme ray} are satisfied.

Start by selecting an arbitrary reaction vector \( \Gamma_1 \) and include \( \Gamma_1 \) as an extreme ray of \( K \). Then, for each reaction vector \( \Gamma_j \) adjacent to \( \Gamma_1 \) in the R-graph:
\begin{enumerate}
    \item If the edge between \( \Gamma_1 \) and \( \Gamma_j \) is positive, include \( -\Gamma_j \) in \( K \);
    \item If the edge is negative, include \( \Gamma_j \) in \( K \).
\end{enumerate}

Proceed inductively: for each unassigned reaction vector \( \Gamma_k \) adjacent to a previously assigned reaction vector \( \Gamma_j \in K \), assign \( \pm \Gamma_k \) to \( K \) based on the sign of the edge between \( \Gamma_j \) and \( \Gamma_k \), according to the following rules:
\begin{enumerate}
    \item If \( +\Gamma_j \in K \) and the edge is positive, assign \( -\Gamma_k \) to \( K \);
    \item If \( +\Gamma_j \in K \) and the edge is negative, assign \( +\Gamma_k \) to \( K \);
    \item If \( -\Gamma_j \in K \) and the edge is positive, assign \( +\Gamma_k \) to \( K \);
    \item If \( -\Gamma_j \in K \) and the edge is negative, assign \( -\Gamma_k \) to \( K \).
\end{enumerate}
If multiple such \( \Gamma_j \in K \) are adjacent to \( \Gamma_k \), we arbitrarily choose one. Repeat until every reaction vector is assigned a signed representative in \( K \). Since the R-graph is connected and finite, this process terminates.

Now suppose, for contradiction, that Corollary~\ref{lem:reac_vec_extreme ray} is not satisfied. That is, there exist reaction vectors \( \Gamma_i \) and \( \Gamma_j \) such that:
\begin{enumerate}
    \item \( \Gamma_j \in Q_1^+(\Gamma_i) \), and
    \item both \( \Gamma_i \) and \( \Gamma_j \) are included in \( K \) (with the given signs).
\end{enumerate}

Then the edge between \( \Gamma_i \) and \( \Gamma_j \) in the R-graph must be positive. Since both were assigned signs based on parity of the number of positive edges from \( \Gamma_1 \), they lie in the same parity class: both reached from \( \Gamma_1 \) via a path with an even number of positive edges.

Concatenating the path from \( \Gamma_1 \) to \( \Gamma_i \), the path from \( \Gamma_1 \) to \( \Gamma_j \) (traversed in reverse), and the positive edge between \( \Gamma_i \) and \( \Gamma_j \), we obtain a closed circuit with an odd number of positive edges—a contradiction to the signed loop property.

Similar contradictions arise in all other cases where a pair of reaction vectors with the wrong relative signs (e.g., \( -\Gamma_i \in K \) and \( \Gamma_j \in K \cap Q_1^-(\Gamma_i) \)) would violate Corollary~\ref{lem:reac_vec_extreme ray}.

Thus, all sign assignments are consistent with the corollary, and the constructed cone \( K \) ensures the system is monotone with respect to \( K \).
\end{proof}

Note the cone from above might not always be pointed. We show that if the reaction vectors are linearly independent, the cone must always be pointed.

\begin{cor}
Assume the reaction network in Lemma \ref{lem:reaction_coords_independent} has linearly independent reaction vectors. Then the cone produced by that lemma is, when restricted to the stoichiometric compatibility class, proper, convex, and pointed.
\end{cor}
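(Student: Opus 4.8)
The plan is to exploit the fact that the cone $K$ produced by Lemma~\ref{lem:reaction_coords_independent} is \emph{simplicial}: by construction its extreme rays are exactly the signed reaction vectors $\epsilon_i \Gamma_i$ with $\epsilon_i \in \{-1,+1\}$, and by hypothesis the $\Gamma_i$ are linearly independent. Convexity is then immediate, since $K = \cone(\{\epsilon_i \Gamma_i\})$ is by definition a conical hull. The two substantive claims, pointedness and properness, both reduce to standard facts about a cone generated by a linearly independent set, so the argument is short linear algebra; the only point requiring care is that ``proper'' must be read relative to the subspace $\spans(\Gamma)$ in which the stoichiometric compatibility class lives.

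First I would record that multiplying each $\Gamma_i$ by a sign $\epsilon_i$ preserves linear independence, so the generators $\{\epsilon_i \Gamma_i\}$ form a linearly independent set spanning $\spans(\Gamma) = \im(\Gamma)$. For pointedness, suppose $v \in K \cap (-K)$. Writing $v = \sum_i a_i (\epsilon_i \Gamma_i)$ and $-v = \sum_i b_i (\epsilon_i \Gamma_i)$ with all $a_i, b_i \geq 0$, adding the two expressions gives $\sum_i (a_i + b_i)(\epsilon_i \Gamma_i) = 0$. Linear independence forces $a_i + b_i = 0$, and nonnegativity then forces $a_i = b_i = 0$, so $v = 0$. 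Hence $K \cap (-K) = \{0\}$ and $K$ is pointed.

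For properness, I would observe that the stoichiometric compatibility class is contained in a translate of $\spans(\Gamma)$, so the relevant ambient space for the restricted system is $\spans(\Gamma)$ and ``proper'' should be interpreted as full-dimensionality within that subspace, exactly as flagged in the discussion preceding Lemma~\ref{lem_viable_at_least_stoich_dimension}. Since the generators $\{\epsilon_i \Gamma_i\}$ form a basis of $\spans(\Gamma)$, the cone $K$ has dimension equal to $\dim \spans(\Gamma)$; concretely, the point $\sum_i (\epsilon_i \Gamma_i)$ is a strictly positive combination of the generators and therefore lies in the relative interior of $K$, so $K$ contains a nonempty set open in the subspace topology of $\spans(\Gamma)$. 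This establishes properness relative to the stoichiometric compatibility class.

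The main obstacle here is conceptual rather than technical: one must correctly recognize that properness is meant in the relative sense with respect to $\spans(\Gamma)$, since $K$ is emphatically \emph{not} full-dimensional in $\mathbb{R}^n$ whenever the number of reactions is smaller than $n$. Once this interpretation is fixed, each of the three assertions follows from the elementary theory of simplicial cones, and no further machinery is needed.
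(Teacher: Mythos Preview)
Your proposal is correct and follows essentially the same approach as the paper: convexity is immediate from the conical-hull definition, pointedness is shown by writing $v$ and $-v$ as nonnegative combinations of the generators, adding, and invoking linear independence, and properness follows because the linearly independent generators span the stoichiometric subspace. Your treatment is slightly more explicit about the signs $\epsilon_i$ and about interpreting properness relative to $\spans(\Gamma)$, but the arguments are the same.
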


\begin{proof}
The cone \( K \) from Lemma \ref{lem:reaction_coords_independent} is generated by linearly independent reaction vectors that span the stoichiometric subspace. 

Convexity follows directly from the fact that \( K \) is generated as a conical combination of vectors.

Properness holds because the dimension of \( K \) matches that of the stoichiometric compatibility class, which is equal to the number of linearly independent reaction vectors.

To show pointedness, suppose for contradiction that \( x \in K \) and \( -x \in K \) for some nonzero \( x \). Then there exist nonnegative coefficients \( a_i, b_i \geq 0 \), not all zero, such that
\[
x = \sum_i a_i \Gamma_i, \quad -x = \sum_i b_i \Gamma_i.
\]
Adding these gives:
\[
0 = \sum_i (a_i + b_i) \Gamma_i.
\]
Since not all \( a_i + b_i \) are zero and the \( \Gamma_i \) are linearly independent, this contradicts the assumption. Therefore, \( K \) must be pointed.
\end{proof}

Next we assume our reaction network has linearly dependent reaction vectors:

\begin{lem}
\label{lem:monotone_in_react_coords_2_entries_per_row}
Suppose a reaction network is monotone in reaction coordinates and the reaction vectors are linearly dependent. Then, by multiplying each reaction vector by a nonzero scalar, we can obtain a stoichiometric matrix in which every row has exactly two nonzero entries that sum to zero.
\end{lem}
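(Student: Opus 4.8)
The plan is to exploit the signed loop property (equivalent to monotonicity in reaction coordinates) to choose \emph{signs} for the reaction vectors, and then to use the linear dependence to choose their \emph{magnitudes}. First I would reuse the sign-assignment construction from the proof of Lemma~\ref{lem:reaction_coords_independent}: traversing the connected R-graph, assign to each reaction a sign $\epsilon_i \in \{+1,-1\}$ so that a positive edge forces opposite signs on its two endpoints and a negative edge forces equal signs, the signed loop property being exactly what guarantees this assignment is well defined (independent of the path chosen). Replacing each $\Gamma_i$ by $\Gamma_i' = \epsilon_i \Gamma_i$, I claim any two reactions sharing a species now carry opposite signs on each shared coordinate: a positive edge means the shared entries originally agreed in sign and the opposite-sign flip makes them disagree, whereas a negative edge means they originally disagreed and the equal-sign flip preserves that disagreement.

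The immediate payoff is that each species then lies in \emph{at most} two reactions: if a species occurred in three reactions, all three pairs would be adjacent in the R-graph and hence pairwise of opposite sign on that coordinate, which is impossible for three nonzero reals. Note this step genuinely needs the flip — the signed loop property alone only yields coherent signs, which places no bound on the number of reactions per species.

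Next I would bring in linear dependence, which gives a nonzero $w$ with $\sum_i w_i \Gamma_i' = 0$ (scaling by nonzero $\epsilon_i$ does not affect dependence). For a species $\ell$ shared by exactly two reactions $i,j$, the row-$\ell$ equation reads $w_i (\Gamma_i')_\ell + w_j (\Gamma_j')_\ell = 0$ with both coordinates nonzero, so $w_i = 0 \iff w_j = 0$; connectedness of the R-graph then forces either all $w_i = 0$ (impossible) or all $w_i \neq 0$. A species appearing in only one reaction $i$ would yield $w_i (\Gamma_i')_\ell = 0$ and hence $w_i = 0$, a contradiction; since every species participates in at least one reaction by the standing assumption, every species lies in \emph{exactly} two reactions. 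Finally, scaling each $\Gamma_i'$ by the nonzero scalar $w_i$ — that is, multiplying the original $\Gamma_i$ by $\epsilon_i w_i$ — gives a matrix each of whose rows has exactly two nonzero entries (both nonzero since $w_i,w_j\neq 0$), and the kernel relation $w_i(\Gamma_i')_\ell + w_j(\Gamma_j')_\ell = 0$ says precisely that these two entries sum to zero.

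The key insight, and the step I expect to be the crux, is the dual role played by the kernel vector $w$: its full support (forced by connectivity through the two-species-per-row structure) is what rules out single-occurrence species, and at the same time its coefficients serve directly as the magnitude-scaling that makes each row sum to zero. Everything else is bookkeeping about edge signs; the only subtlety requiring care is verifying that the sign flip converts ``coherent'' shared signs into ``pairwise opposite'' shared signs in all four edge/sign combinations, which is what powers the at-most-two-per-row bound.
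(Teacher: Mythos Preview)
Your proposal is correct and follows essentially the same approach as the paper: flip signs via the construction of Lemma~\ref{lem:reaction_coords_independent} so that all shared coordinates become opposite in sign (giving at most two nonzero entries per row), then use a nontrivial kernel vector—whose support must be full by R-graph connectedness—both to rule out single-occurrence species and to provide the magnitude scaling that makes each row sum to zero. Your treatment of the ``at most two per row'' step (three pairwise-opposite nonzero reals is impossible) and of the single-occurrence contradiction is in fact a bit more explicit than the paper's, but the logical skeleton is identical.
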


\begin{proof}
We begin by applying the same procedure as in Lemma \ref{lem:reaction_coords_independent}, adjusting the signs of the reaction vectors so that each \( \Gamma_i \) is the only vector in \( Q_1^+(\Gamma_i) \). Under this construction, each row of the resulting stoichiometric matrix must have at most two nonzero entries, and these entries must have opposite signs.

Indeed, suppose a row had three nonzero entries. Then the corresponding three reactions would form a triangle in the \( R \)-graph. If all three entries had the same sign, then each edge in the triangle would be positive. If only two entries had the same sign, then one edge would be positive and two would be negative. In either case, the signed loop property would be violated, contradicting monotonicity.

Now, since the reaction vectors are linearly dependent, there exists a nontrivial linear combination:
\[
\sum_{i=1}^n a_i \Gamma_i = 0,
\]
with at least one \( a_i \neq 0 \). Define new reaction vectors \( \Gamma_i' = a_i \Gamma_i \). Then we have:
\[
\sum_{i=1}^n \Gamma_i' = 0.
\]

We now argue that none of the \( \Gamma_i' \) can be zero. Suppose, for contradiction, that \( \Gamma_j' = 0 \). Then any other reaction vector \( \Gamma_k \) sharing a nonzero coordinate with \( \Gamma_j \) must also have \( a_k = 0 \) to preserve cancellation. Repeating this logic across all connected reactions would force \( a_i = 0 \) for all \( i \), contradicting our assumption of linear dependence. Hence, all \( \Gamma_i' \) are nonzero.

Furthermore, since each row initially had at most two nonzero entries with opposite signs, and the \( \Gamma_i' \) are scalar multiples of the \( \Gamma_i \), each row in the new matrix has exactly two nonzero entries with equal magnitude and opposite sign. Thus, each row sums to zero.
\end{proof}

\begin{lem}
\label{lem:react coord alignable}
If a reaction network is linearly dependent and monotone in reaction coordinates, then its stoichiometric matrix \( \Gamma \) is alignable.
\end{lem}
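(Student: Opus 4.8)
The plan is to reduce this statement to the two results already in hand: Lemma~\ref{lem:monotone_in_react_coords_2_entries_per_row}, which normalizes the row structure by rescaling reactions, and Lemma~\ref{lem:cube type to aligned}, which produces the actual $PND$ factorization. The only extra work is a diagonal bookkeeping step to transfer alignability from the rescaled matrix back to $\Gamma$ itself.

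First I would apply Lemma~\ref{lem:monotone_in_react_coords_2_entries_per_row}. Since the network is monotone in reaction coordinates and the reaction vectors are linearly dependent, there is a nonzero scalar attached to each reaction so that the rescaled stoichiometric matrix has exactly two nonzero entries in each row, summing to zero. Collecting these scalars into a diagonal matrix $D_0$ (invertible, since the scalars are nonzero), rescaling the columns is right multiplication, so I write $\Gamma' = \Gamma D_0$, where every row of $\Gamma'$ has exactly two nonzero entries that are negatives of one another. Next I would check that $\Gamma'$ satisfies the hypotheses of Lemma~\ref{lem:cube type to aligned}. It has at most two nonzero entries per row by construction. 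For the sharing condition, suppose two reactions $R_i$ and $R_j$ share a species $l$; then both have a nonzero $l$-th entry, and since row $l$ of $\Gamma'$ contains exactly two nonzero entries summing to zero, these two entries must be precisely $(R_i)_l$ and $(R_j)_l$, whence $(R_i)_l = -(R_j)_l$. Thus every shared coordinate lands in the ``opposite sign'' branch of the hypothesis, and the condition holds. Lemma~\ref{lem:cube type to aligned} then gives that $\Gamma'$ is alignable, i.e., $\Gamma' = P N D$ with $P$, $N$, $D$ as in the definition of alignability.

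Finally I would pull the factorization back to $\Gamma$. From $\Gamma' = \Gamma D_0$ we get $\Gamma = \Gamma' D_0^{-1} = P N (D D_0^{-1})$. The matrix $\tilde D = D D_0^{-1}$ is a product of invertible diagonal matrices, hence itself diagonal with nonzero diagonal entries, so $\Gamma = P N \tilde D$ exhibits $\Gamma$ in exactly the form required, proving that $\Gamma$ is alignable. The main point to be careful about is that the column rescaling from Lemma~\ref{lem:monotone_in_react_coords_2_entries_per_row} is captured as right multiplication by $D_0$, and that absorbing $D_0^{-1}$ into the alignable factorization of $\Gamma'$ leaves the diagonal factor nonsingular; the verification of the sharing hypothesis of Lemma~\ref{lem:cube type to aligned} is the only genuine content, and it is immediate from the ``exactly two nonzero entries per row summing to zero'' structure. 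Everything else is routine bookkeeping with diagonal matrices.
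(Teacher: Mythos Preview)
Your proposal is correct and follows essentially the same route as the paper: rescale the columns via the linear dependence relation so that each row has two nonzero entries summing to zero (the paper redoes part of Lemma~\ref{lem:monotone_in_react_coords_2_entries_per_row} inline, whereas you simply cite it), then invoke Lemma~\ref{lem:cube type to aligned} and absorb the diagonal rescaling into the $D$ factor. The verification of the sharing hypothesis of Lemma~\ref{lem:cube type to aligned} is handled identically in both.
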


\begin{proof}
Let the reaction vectors be \( \{R_i\}_{1 \leq i \leq n} \), with stoichiometric matrix \( \Gamma \). Since the system is monotone in reaction coordinates, the \( R \)-graph condition holds, implying that each species appears in at most two reactions.

Suppose, for contradiction, that some species appears in only one reaction \( R_i \). Because the set \( \{R_i\} \) is linearly dependent, there must exist a nontrivial linear combination \( \sum_{i=1}^n a_i R_i = 0 \). However, the unique nonzero entry for that species cannot be canceled unless \( a_i = 0 \), forcing \( R_i \) to be excluded. Moreover, any reaction sharing a species with \( R_i \) must also be excluded, as each species appears in at most two reactions. By connectedness of the reaction graph, this exclusion propagates to all reactions, implying \( a_i = 0 \) for all \( i \), a contradiction. Hence, every species must appear in exactly two reactions.

Now consider the linear dependence relation \( \sum_{i=1}^n a_i R_i = 0 \). By the argument above, each \( a_i \neq 0 \). Define a diagonal matrix \( D \) with entries \( D_{ii} = 1/a_i \), and let \( M \) be the matrix whose columns are \( a_i R_i \), so that \( \Gamma = M D \).

Each row of \( M \) has exactly two nonzero entries with opposite signs. Moreover, if two reactions \( R_i \) and \( R_j \) share a set of species \( I \), then the linear relation implies that \( (a_i R_i)_I = -(a_j R_j)_I \). Thus, the hypotheses of Lemma \ref{lem:cube type to aligned} are satisfied, and \( M \) admits a factorization \( M = P N \), where \( N \) is aligned. It follows that \( \Gamma = P N D \), and so \( \Gamma \) is alignable.
\end{proof}

\begin{thm}
\label{thm:dependent reaction monotone is contractive}
Suppose a reaction network is monotone in reaction coordinates and has linearly dependent reaction vectors. Then the network is weakly contractive.
\end{thm}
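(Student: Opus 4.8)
The plan is to reduce to the viable-set machinery of Section~\ref{sec:algo for cones} and then cross over to contractivity via the lift. By Lemma~\ref{lem:monotone_in_react_coords_2_entries_per_row}, after multiplying each reaction vector by a suitable nonzero scalar---an operation that yields an equivalent system and hence preserves weak contractivity---I may assume every row of $\Gamma$ has exactly two nonzero entries of equal magnitude and opposite sign, with the reaction vectors summing to zero, so that $\Gamma \mathbf{1} = 0$. (Equivalently one could first pass to the aligned factor $N$ of Lemma~\ref{lem:react coord alignable} and transfer the conclusion back through Corollary~\ref{cor:align to general}; since the argument below depends only on sign patterns, working with the rescaled $\Gamma$ directly suffices.)

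Next I would build an explicit viable set. Set
\[
A = \Bigl\{\, -\sum_{i \in J} \Gamma_i \ \Big|\ J \subseteq \{1,\dots,m\} \,\Bigr\}, \qquad v_J := -\sum_{i \in J}\Gamma_i .
\]
This is the construction of Theorem~\ref{thm:cubical network cones} with base point $v = 0$; linear dependence of the columns forces this degenerate choice, since the sign rule defining $v$ returns $v=0$ when every row is sign-mixed. Thus $A$ is a (possibly degenerate) zonotope rather than a genuine cube, and $A = -A$ because $\sum_i \Gamma_i = 0$. I would verify closedness and permissibility directly: for a coordinate $l$ in the support of $\Gamma_i$, row $l$ has a unique further nonzero entry $(\Gamma_{i'})_l = -(\Gamma_i)_l$, and a short computation of $(v_J)_l (\Gamma_i)_l$ shows $v_J \in Q_1(-\Gamma_i)$ when $i \in J$ and $v_J \in Q_1(\Gamma_i)$ when $i \notin J$. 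This is exactly the case analysis in the proof of Theorem~\ref{thm:cubical network cones}, which never invoked linear independence for viability; adding or deleting $i$ from $J$ realizes the permissible operations $v_J \mapsto v_J \pm \Gamma_i$ within $A$. Since each element of $A$ is a combination of reaction vectors, $\operatorname{AffHul}(A) = \operatorname{Im}(\Gamma)$, whose dimension equals that of the stoichiometric compatibility classes, with the matching lower bound supplied by Lemma~\ref{lem_viable_at_least_stoich_dimension}.

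I would then upgrade to strong monotonicity and conclude. As the network is reversible with connected R-graph, any two species-sharing reactions are joined by edges in both directions of the $\mathcal{RI}$-graph, so the $\mathcal{RI}$-graph is strongly connected. With the viable set $A$ of correct dimension and this connectivity, Theorem~\ref{thm:output_M_to_strong_monotone} makes the lift strongly monotone with respect to the lifted cone $K'$ generated by the extreme points of $\operatorname{conv}(A)$ (together with their negatives, which here coincide with the points themselves). The lift automatically satisfies the transversality hypotheses of Theorem~\ref{thm:strongly_monotone_lift_to_weakly contractive_general_2}: every generator has last coordinate $1$, so a conic combination has last coordinate equal to the sum of its coefficients, forcing $K' \cap S = \{0\}$ for the stoichiometric subspace $S$, while differences of generators span $S$, giving $\operatorname{span}(K') \supseteq S$. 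Applying Theorem~\ref{thm:strongly_monotone_lift_to_weakly contractive_general_2} then yields weak contractivity on the relative interior of each stoichiometric compatibility class.

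The step I expect to be the main obstacle is the viability verification in this degenerate regime. The cubical construction of Theorem~\ref{thm:cubical network cones} was stated for linearly independent columns and produced a full-dimensional cube with $v \neq 0$, whereas linear dependence collapses it to $v=0$; the real work is confirming that closedness and permissibility persist through this collapse and, crucially, that $\operatorname{AffHul}(A)$ still attains the full stoichiometric dimension rather than dropping---without which Theorem~\ref{thm:output_M_to_strong_monotone} would not apply. A secondary point needing care is justifying strong connectivity of the $\mathcal{RI}$-graph from reversibility together with the standing connectedness of the R-graph, since strong (not merely ordinary) monotonicity is what the contractivity bridge requires.
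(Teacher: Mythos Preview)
Your proposal is correct and follows essentially the same route as the paper: construct the viable set of all subset sums of the (rescaled) reaction vectors, verify closedness and permissibility by the same coordinate-wise case analysis, then invoke Theorem~\ref{thm:output_M_to_strong_monotone} and Theorem~\ref{thm:strongly_monotone_lift_to_weakly contractive_general_2}. Your set $A=\{-\sum_{i\in J}\Gamma_i\}$ coincides with the paper's $M=\{\sum_{j\in A}R_j\}$ since $\Gamma\mathbf{1}=0$, and your framing of this as the degenerate $v=0$ case of the cubical construction is a nice unifying observation but not a different argument.

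One remark on your flagged concern: the paper's own proof also applies Theorem~\ref{thm:output_M_to_strong_monotone} without separately verifying strong connectivity of the $\mathcal{RI}$-graph in the proof body, so your worry is not a defect of your approach relative to the paper---it is a standing hypothesis inherited from the ambient Theorem~\ref{thm:collection of theorems} (and, in the reaction-coordinate literature of \cite{Angeli2010-hj}, from reversibility together with the assumed connectedness of the R-graph, exactly as you argue).
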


\begin{proof}
Assume without loss of generality that the stoichiometric matrix \( \Gamma \) is aligned and has entries in \( \{-1, 0, 1\} \). (If not, we may make it aligned, as in Lemma \ref{lem:react coord alignable}.) Also assume, following Lemma \ref{lem:monotone_in_react_coords_2_entries_per_row}, that each row of \( \Gamma \) has exactly one positive and one negative entry, or is identically zero.

Let \( \{R_i\}_{1 \leq i \leq n} \) denote the set of reaction vectors. Consider the set \( M \) consisting of all subset sums of these vectors, i.e., all vectors of the form \( \sum_{j \in A} R_j \) for \( A \subseteq \{1, \dots, n\} \). Let \( \operatorname{conv}(M) \) be the convex hull of this set, and let \( M' \subset \operatorname{conv}(M) \) denote the set of its extreme points.

We claim that \( M' \) is a viable set. It suffices to show that \( M \) is a viable set:

Since \( \Gamma \) is aligned, each \( R_i \in \{-1, 0, 1\}^n \). For any subset sum \( s = \sum_{j \in A} R_j \), we have \( s \in Q_1(R_i) \) if \( i \in A \), and \( s \in Q_1(-R_i) \) otherwise. Thus, all elements of \( M \) lie in the appropriate regions to be considered permissible.

To see this explicitly, suppose \( i \in A \), so that
\[
s = R_i + \sum_{j \in A \setminus \{i\}} R_j.
\]
Then for each species affected by \( R_i \), the corresponding component of \( s \) is either zero or has the same sign as in \( R_i \), since each \( R_j \in \{-1, 0, 1\}^n \), and each row of \( \Gamma \) has exactly two nonzero entries of equal magnitude and opposite sign. This implies that when $i \in A$ then $s \in Q_1(R_i)$, and when $i \not\in A$ then $s \in Q_1(-R_i)$.

Moreover, \( M \) is closed under permissible operations. Suppose \( s = \sum_{j \in A} R_j \in M \), and consider a permissible operation involving \( R_i \). If \( i \in A \), then $s \in Q_1(R_i)$ and
\[
s - R_i = \sum_{j \in A \setminus \{i\}} R_j \in M;
\]
if \( i \notin A \), then $s \in Q_1(-R_i)$ and
\[
s + R_i = \sum_{j \in A \cup \{i\}} R_j \in M.
\]
Thus, every permissible operation applied to an element of \( M \) yields another element of \( M \), so \( M \) is closed under permissible operations.

Therefore, \( M' \) is a viable set. By Theorem \ref{thm:output_M_to_strong_monotone}, the lifted system is strongly monotone. Applying Theorem \ref{thm:strongly_monotone_lift_to_weakly contractive_general_2}, we conclude that the system is weakly contractive on the relative interior of each stoichiometric compatibility class.

Finally, by Corollary \ref{cor:align to general} and Lemma \ref{lem:react coord alignable}, the result extends to the original network, even if \( \Gamma \) is not aligned. Hence, the network is weakly contractive.
\end{proof}

This result is related to the results in \cite{Angeli2010-hj} and \cite{ANGELI2008128}. This result also shows the global convergence of this class of networks in \cite{Angeli2010-hj} using the results in \cite{ANGELI2008128}. One novel aspect of our result, however, is the establishment of a decreasing norm on the species space of our reaction network (note that in \cite{ANGELI2008128} they do establish a function which decreases along the trajectories, though it is constructed differently and only applied to reaction coordinates in \cite{Angeli2010-hj}).

Finally, we can prove the following:

\begin{thm}
Any reaction network that is monotone in reaction coordinates with respect to an orthant cone is also monotone in species coordinates (possibly after adding a dummy species).
\end{thm}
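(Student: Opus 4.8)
The plan is to reduce the statement to the two cases already handled in this section according to whether the reaction vectors are linearly independent or dependent, and to recognize that the dummy species is precisely the extra coordinate supplied by the lift, which is forced only in the dependent case. First I would dispose of the independent case. If the reaction vectors are linearly independent, then Lemma~\ref{lem:reaction_coords_independent} already constructs a cone $K \subset \mathbb{R}^n$, generated by suitably signed reaction vectors, with respect to which the network is monotone directly in species coordinates; the corollary immediately following that lemma shows $K$ is proper, convex, and pointed when restricted to the stoichiometric compatibility class, so it induces a genuine partial order there. Hence in this case no dummy species is needed and the conclusion holds as stated.

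Next I would treat the linearly dependent case, where the dummy species becomes essential. The point is that the naive cone generated by the signed reaction vectors cannot work: after the rescaling of Lemma~\ref{lem:monotone_in_react_coords_2_entries_per_row} the reaction vectors sum to zero, so their conical hull contains a line and fails to be pointed. The lift removes exactly this obstruction. Following the construction in the proof of Theorem~\ref{thm:dependent reaction monotone is contractive}, I would take $M$ to be the set of all subset sums $\sum_{j \in A} R_j$ of the (rescaled, aligned) reaction vectors, which that proof shows is a viable set, and then invoke Theorem~\ref{thm:lift viable for monotone} to conclude that the lifted system is monotone with respect to the cone generated by the lift of $M$. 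Since every generator of this lifted cone has last coordinate $1$, any nonnegative combination equal to $0$ forces all coefficients to vanish, so the lifted cone is pointed and defines a legitimate order on the augmented space $\mathbb{R}^{n+1}$.

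It then remains to interpret the lift as a reaction network. The lifted dynamics $\dot z = [\,f(x);0\,]$ is exactly the original network augmented by a single inert species $x_{n+1}$, whose stoichiometric coefficient is zero in every reaction; this is the dummy species. Thus the augmented network is monotone in species coordinates with respect to the lifted cone. For a general, non-aligned $\Gamma$, Lemma~\ref{lem:react coord alignable} furnishes the factorization $\Gamma = PND$ with $N$ aligned, and Corollary~\ref{cor:align to general} transports monotonicity of $N$ across the map $P$ to $\Gamma$, so the conclusion survives without assuming alignment. Combining the two cases yields the theorem.

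The main obstacle I anticipate is conceptual rather than computational: one must argue carefully that the lifted cone is a bona fide order cone (pointed, and full-dimensional within the relevant invariant affine subspace so that the monotonicity criteria of Section~\ref{sec:Conditions for monotone} apply), and that the lift genuinely corresponds to adjoining one inert dummy species, so that ``monotonicity of the lifted system'' is the same thing as ``monotonicity in species coordinates after adding a dummy species.'' Once this identification is made precise, the two cases assemble directly from the results already established.
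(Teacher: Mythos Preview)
Your proposal is correct and follows essentially the same two-case split as the paper: Lemma~\ref{lem:reaction_coords_independent} for the independent case, and the viable-set construction from the proof of Theorem~\ref{thm:dependent reaction monotone is contractive} (lifted, i.e., with a dummy species) for the dependent case. Your version is more detailed than the paper's two-sentence proof---in particular you spell out why the lifted cone is pointed, why the lift is exactly the addition of an inert species, and how Lemma~\ref{lem:react coord alignable} with Corollary~\ref{cor:align to general} handles non-aligned $\Gamma$---but these are elaborations of the same argument, not a different route.
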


\begin{proof}
By Lemma~\ref{lem:reaction_coords_independent}, if the reaction vectors are linearly independent, the system is monotone with respect to a cone in species coordinates. If the reaction vectors are linearly dependent, then by the proof of Theorem~\ref{thm:dependent reaction monotone is contractive}, the lifted system is strongly monotone, and thus the original system is monotone in species coordinates as well.
\end{proof}

Lastly, we will note that if we combine these results with the results on cubical cones, we arrive at:

\begin{thm}
\label{2 in each row aligned weakly contractive}
Suppose \( \Gamma \) is an aligned matrix in which each row has at most two nonzero entries. Then the corresponding system is weakly contractive with respect to some norm.
\end{thm}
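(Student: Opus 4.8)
The plan is to collapse the two structural cases—linearly independent columns (the cubical case, Theorem~\ref{thm:cubicals are weakly contractive}) and linearly dependent columns (Theorem~\ref{thm:dependent reaction monotone is contractive})—into a single viable-set construction, and then pass from strong monotonicity of the lift to weak contractivity via Theorem~\ref{thm:strongly_monotone_lift_to_weakly contractive_general_2}. First I would normalize the matrix: by aligned condition~(1) there is a positive diagonal $D$ with $\Gamma D$ having entries in $\{-1,0,1\}$, and since right multiplication by such a $D$ only reparametrizes the reaction rates and hence preserves the class of admissible dynamics (cf.\ Corollary~\ref{cor:align to general}), I may assume $\Gamma$ itself has entries in $\{-1,0,1\}$. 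This normalization keeps both ``at most two nonzero entries per row'' and the aligned property.

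Next I would reuse the corner construction from the proof of Theorem~\ref{thm:cubical network cones}. Define the sign vector $v$ by $(v)_i = 1$ if every reaction vector has nonnegative $i$th coordinate, $(v)_i = -1$ if every reaction vector has nonpositive $i$th coordinate, and $(v)_i = 0$ otherwise, and set
\[
A = \Bigl\{\, v - \sum_{i \in J} \Gamma_i \ \Big|\ J \subseteq \{1,\dots,m\} \,\Bigr\}.
\]
The key observation is that the viability verification carried out for Theorem~\ref{thm:cubical network cones} (the Case~1/Case~2 analysis establishing $v_J \in Q_1(-\Gamma_i)$ for $i\in J$ and $v_J \in Q_1(\Gamma_i)$ for $i\notin J$) uses only that $\Gamma$ is aligned with $\{-1,0,1\}$ entries and that each row has at most two nonzero entries; it never uses linear independence. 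Hence $A$ is viable in both cases. Since every element of $A$ lies in $v + \operatorname{Im}(\Gamma)$, we have $\operatorname{AffHul}(A) \subseteq v + \operatorname{Im}(\Gamma)$, while Lemma~\ref{lem_viable_at_least_stoich_dimension} supplies the reverse dimension bound; thus $\operatorname{AffHul}(A)$ has exactly the dimension of the stoichiometric compatibility classes, as needed to invoke Theorem~\ref{thm:output_M_to_strong_monotone}.

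With $A$ in hand I would apply Theorem~\ref{thm:output_M_to_strong_monotone}: invoking strong connectivity of the $\mathcal{RI}$-graph (carried as a standing hypothesis, exactly as in Theorem~\ref{thm:cubicals are weakly contractive}, and precisely what upgrades monotonicity to \emph{strong} monotonicity), the lift of the network is strongly monotone with respect to the cone generated by the lift of the extreme points of $\operatorname{conv}(A)$. Finally, Theorem~\ref{thm:strongly_monotone_lift_to_weakly contractive_general_2} converts this into weak contractivity on the relative interior of each stoichiometric compatibility class; its transversality hypotheses $K \cap S = \{0\}$ and $\operatorname{span}(K) \supseteq S$ hold because every lifted generator has last coordinate $1$, while $S = \operatorname{Im}(\Gamma)$ lies in the hyperplane of vanishing last coordinate. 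When the columns are independent, $A$ is genuinely a cube and one recovers Theorem~\ref{thm:cubicals are weakly contractive}; when they are dependent, $A$ is the natural generalization of the subset-sum set used in Theorem~\ref{thm:dependent reaction monotone is contractive}.

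The step I expect to be the main obstacle is the dependent case, where $v$ may equal $0$, so that $A$ contains the origin as an extreme point. This is exactly the scenario $0 \in M'$ handled inside Theorem~\ref{thm:output_M_to_strong_monotone} by symmetrizing to $M' \cup -M'$, and I would need to confirm that this symmetrization preserves viability and does not shrink $\operatorname{AffHul}(A)$, so the transversality conditions for the lift persist. A secondary subtlety worth flagging is that an aligned matrix with at most two nonzero entries per row need not be monotone in reaction coordinates—two reactions may share one same-sign and one opposite-sign species, producing an unsigned $R$-graph edge—so the dependent case does \emph{not} reduce verbatim to Theorem~\ref{thm:dependent reaction monotone is contractive}, and it is precisely this gap that forces the uniform corner construction rather than the reaction-coordinate subset-sum argument.
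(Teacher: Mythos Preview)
Your unified corner-set construction is correct and is a genuinely different route from the paper's proof. The paper does not attempt a single viable-set construction; instead it splits on whether the columns of $\Gamma$ are linearly independent. In the independent case it invokes Theorem~\ref{thm:cubicals are weakly contractive} directly. In the dependent case it argues that every nonzero row must have \emph{exactly} two nonzero entries, then uses the linear relation $\sum_i a_i\Gamma_i=0$ (with all $a_i\neq 0$ by connectedness) to sign the columns so that each row becomes $(+1,-1)$; with that normalization the $R$-graph has no positive edges, the signed loop property holds trivially, and Theorem~\ref{thm:dependent reaction monotone is contractive} applies verbatim. Your approach avoids this case split by observing---correctly---that the Case~1/Case~2 viability check in the proof of Theorem~\ref{thm:cubical network cones} never uses linear independence, so the set $A=\{v-\sum_{i\in J}\Gamma_i\}$ is viable in both regimes, and Theorem~\ref{thm:output_M_to_strong_monotone} (which already handles $0\in M'$ internally) plus Theorem~\ref{thm:strongly_monotone_lift_to_weakly contractive_general_2} finish the job uniformly. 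This buys you a cleaner single argument; the paper's split buys a direct reduction to two theorems already on the shelf.

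Your ``secondary subtlety'' is, however, a misconception. You are right that an aligned matrix with at most two nonzero entries per row can have an unsigned $R$-graph edge (two columns sharing one same-sign and one opposite-sign row), but this can only happen when the columns are \emph{independent}: if such columns $j,k$ existed together with a nontrivial relation $\sum a_i\Gamma_i=0$, the same-sign row forces $a_j=-a_k$ while the opposite-sign row forces $a_j=a_k$, so $a_j=a_k=0$, and connectedness propagates to $a\equiv 0$. Hence in the dependent case the $R$-graph condition is automatic after re-signing, and the paper's reduction to Theorem~\ref{thm:dependent reaction monotone is contractive} is legitimate. This does not affect the validity of your argument, only its stated motivation.
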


\begin{proof}
If the columns of \( \Gamma \) are linearly independent, the result follows from Theorem \ref{thm:cubicals are weakly contractive}. If they are linearly dependent, then Theorem \ref{thm:dependent reaction monotone is contractive} applies.

Indeed, any aligned matrix with linearly dependent columns and at most two nonzero entries per row must have either zero or exactly two nonzero entries per row. As observed in the proof of Lemma \ref{lem:react coord alignable}, if a row had only one nonzero entry, the corresponding reaction vectors would necessarily be linearly independent. Thus, we may assume without loss of generality that each row has exactly two nonzero entries.

Following the proof of Lemma \ref{lem:monotone_in_react_coords_2_entries_per_row}, we may multiply each column by \( \pm 1 \) so that every row has exactly one \( +1 \) and one \( -1 \). In this case, the associated \( R \)-graph contains no positive edges and therefore automatically satisfies the \( R \)-graph condition. It then follows from Theorem \ref{thm:dependent reaction monotone is contractive} that the system is weakly contractive.
\end{proof}

Note that in the course of the previous proof, we in fact showed that an aligned matrix with at most two nonzero entries per row and linearly dependent columns must satisfy the \( R \)-graph condition. This leads to the following corollary:

\begin{cor}
\label{cor:2 entries per row iff r graph}
An aligned matrix \( N \) with linearly dependent columns satisfies the \( R \)-graph condition if and only if it has at most two nonzero entries per row.
\end{cor}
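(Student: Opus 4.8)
The plan is to prove the two implications separately, leaning on work already in place. The backward direction---that at most two nonzero entries per row forces the $R$-graph condition---is exactly what was established in the remark preceding the corollary and inside the proof of Theorem~\ref{2 in each row aligned weakly contractive}: after rescaling columns by $\pm 1$, every row has one $+1$ and one $-1$, so the $R$-graph has no positive edges, and the signed loop property holds vacuously (zero is even). I would simply cite that. The content lies in the forward direction: assuming $N$ satisfies the $R$-graph condition, I must show that no row of $N$ has three or more nonzero entries.

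First I would argue by contradiction. Suppose some row, corresponding to a species $s$, has nonzero entries $a_i, a_j, a_k$ in three distinct columns $i, j, k$. Since each pair of these columns shares species $s$, the $R$-graph contains an edge between each pair, so $i, j, k$ form a triangle, i.e.\ a $3$-cycle. Because $N$ satisfies the $R$-graph condition, every edge is signed; hence for each pair all of their shared species have a consistent sign relationship, and in particular the sign of each edge is governed by $s$: the edge between columns $p$ and $q$ is positive iff $a_p a_q > 0$ and negative iff $a_p a_q < 0$.

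Next I would count positive edges by parity. Writing $p$ for the number of positive entries and $q$ for the number of negative entries among $\{a_i, a_j, a_k\}$ (so $p + q = 3$), the number of positive edges of the triangle is $\binom{p}{2} + \binom{q}{2}$. Checking the four cases $(p,q) \in \{(3,0),(2,1),(1,2),(0,3)\}$ yields $3, 1, 1, 3$ positive edges respectively---always odd. This $3$-cycle therefore has an odd number of positive edges, contradicting the signed loop property. Hence every row has at most two nonzero entries, which completes the forward direction and the corollary. This is precisely the triangle argument of Lemma~\ref{lem:monotone_in_react_coords_2_entries_per_row}, here repackaged as an equivalence.

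The main obstacle to watch is the claim that each triangle edge is genuinely signed and that its sign is controlled by $s$: I must rule out that two of the columns share an additional species whose sign relationship conflicts with that of $s$, which would leave the edge unsigned and break the parity count. This is handled by the hypothesis itself, since the $R$-graph condition demands that every edge be signed, so any such conflict is excluded; the alignment of $N$ further guarantees that the entries carry well-defined signs. The hypotheses of linear dependence and alignment are inherited from the statement but are used only lightly in this forward direction, the real work being the parity obstruction on the induced $3$-cycle.
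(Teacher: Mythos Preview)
Your proposal is correct and follows the same strategy as the paper: for the backward direction you both invoke the argument inside the proof of Theorem~\ref{2 in each row aligned weakly contractive}, and for the forward direction you reproduce the triangle-parity argument that the paper packages as Lemma~\ref{lem:monotone_in_react_coords_2_entries_per_row} and simply cites. Your version is slightly more explicit about the edge-sign bookkeeping and the case analysis, but the underlying mechanism is identical.
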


\begin{proof}
By Lemma~\ref{lem:monotone_in_react_coords_2_entries_per_row}, if \( N \) satisfies the \( R \)-graph condition, then it must have exactly two nonzero entries per row. Conversely, by the proof of Theorem~\ref{2 in each row aligned weakly contractive}, if \( N \) is aligned, has linearly dependent columns, and at most two nonzero entries per row, then it must in fact have exactly two nonzero entries per row, and satisfies the \( R \)-graph condition. The result follows.
\end{proof}

We will refer to a network $\Gamma$ which satisfies the $R$-graph condition and has linearly dependent columns as a \textbf{type A} network. 

\begin{cor}
    A reaction network is factorizable as $PND$ where $N$ has at most two entries per row and linearly dependent columns, if and only if it it is type A.
\end{cor}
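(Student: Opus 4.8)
The plan is to prove both implications by passing through the alignment $N$ and transferring the two defining features of a type A network---linear dependence of the columns and the $R$-graph (signed loop) condition---between $\Gamma$ and $N$. The column dependence transfers in both directions essentially for free: writing $\Gamma = PND$, the matrix $P$ has disjoint-support columns (at most one nonzero entry per row, no zero column) and is therefore injective, while $D$ is invertible; thus $N$ has a nonzero kernel vector $v$ if and only if $\Gamma$ has the nonzero kernel vector $D^{-1}v$, since $\Gamma D^{-1}v = PNv$ and $P$ is injective. The substantive content is therefore the equivalence of the $R$-graph condition for $\Gamma$ and for its alignment $N$, which I would isolate as a single transfer principle and prove once.

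For the direction ``type A $\Rightarrow$ factorizable,'' I would start from a type A network $\Gamma$, which by definition has linearly dependent columns and satisfies the $R$-graph condition; since the latter is exactly monotonicity in reaction coordinates with respect to an orthant cone, Lemma~\ref{lem:react coord alignable} applies and yields a factorization $\Gamma = PND$ with $N$ aligned. It then remains to check that this alignment has at most two nonzero entries per row. This is built into the construction: the proof of Lemma~\ref{lem:react coord alignable} (via Lemma~\ref{lem:cube type to aligned}) shows that each species appears in exactly two reactions, and the rows of $N$ index coordinate-equivalence classes of species, all of whose members share the same at most two incident reactions; hence each row of $N$ carries at most two nonzero entries. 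Together with the column dependence transferred above, $N$ is an alignment of the required form.

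For the converse, I would assume $\Gamma = PND$ with $N$ an aligned matrix having at most two nonzero entries per row and linearly dependent columns. Corollary~\ref{cor:2 entries per row iff r graph} then gives that $N$ satisfies the $R$-graph condition, and the dependence of the columns of $\Gamma$ was already noted. Applying the transfer principle to carry the signed loop property from $N$ up to $\Gamma$ shows that $\Gamma$ satisfies the $R$-graph condition, so $\Gamma$ is type A.

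The main obstacle, and the step I would develop most carefully, is the transfer principle: the $R$-graph of $\Gamma$ satisfies the signed loop property if and only if that of $N$ does. Two reactions (columns) $j,j'$ share a species in $\Gamma$ exactly when they share the corresponding collapsed species in $N$, so the two $R$-graphs have the same adjacency. For the signs I would compute the edge sign through a species $i$ of $\Gamma$ as $\operatorname{sgn}(\Gamma_{ij}\Gamma_{ij'}) = \operatorname{sgn}(N_{\sigma(i)j}N_{\sigma(i)j'})\,\operatorname{sgn}(D_{jj}D_{j'j'})$, where $\sigma(i)$ is the unique column of $P$ that is nonzero in row $i$; the factor coming from $P$ enters squared and so drops out, which simultaneously shows that species duplicated by $P$ contribute identically signed edges and that each edge of $\Gamma$ inherits a well-defined sign from the corresponding edge of $N$. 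The remaining factor $\operatorname{sgn}(D_{jj}D_{j'j'})$ amounts to reversing each reaction $j$ with $D_{jj}<0$; since reversing a single reaction flips exactly the two loop-edges incident to it, any such reversal changes an even number of edge signs around every loop and hence preserves the parity of the positive edges. Consequently the signed loop property holds for $\Gamma$ if and only if it holds for $N$, closing both directions. (When $D$ has positive diagonal the last point is vacuous, as edge signs are then literally preserved.)
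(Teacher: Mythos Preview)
Your proposal is correct and follows essentially the same approach as the paper, invoking Lemma~\ref{lem:react coord alignable} for one direction and Corollary~\ref{cor:2 entries per row iff r graph} for the other, together with the transfer of column dependence and the $R$-graph condition between $\Gamma$ and its alignment $N$. You are in fact more explicit than the paper about why the $R$-graph property transfers (the paper simply asserts that the $R$-graphs of $PND$ and $N$ coincide), and in the backward direction you read the two-entries-per-row property of $N$ directly from the construction in Lemmas~\ref{lem:react coord alignable} and~\ref{lem:cube type to aligned} rather than re-applying Corollary~\ref{cor:2 entries per row iff r graph} as the paper does; both variations are minor.
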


\begin{proof}
    Since the $R$-graph does not depend on reversibility of the reactions, the $R$-graph of $PND$ and $N$ are the same. Since $P$ is injective $PND$ has linearly dependent columns if and only if $N$ does.

    First assume $N$ has at most two entries per row and linearly dependent columns. By Corollary \ref{cor:2 entries per row iff r graph} $N$ satisfies the $R$-graph condition and as we noted we also must have the $\Gamma = PND$ has linearly dependent columns, so it is type A. 

    Now assume the network is type $A$. By Lemma \ref{lem:react coord alignable} $A
    \Gamma$ is alignable, with some alignement $N$. Note as we noted earlier we must have $N$ also satisfy the $R$-graph condition, and have lienarly dependent columns. Again by Corollary \ref{cor:2 entries per row iff r graph} we have that $N$ has at most two nonzero entries per row, and so we are done.
\end{proof}

\section{Proof of the Theorem \ref{thm:collection of theorems}}
\label{sec:proof of theorem 1}

\begin{enumerate}
    \item Let \( S \) be the set of matrices whose entries lie in \( \{-1, 0, 1\} \) and which have at most two nonzero entries in each column (hence all matrices in \( S \) are alignable by Lemma~\ref{lem:cube type to aligned}).
    \item Let \( S^t \) be the set of matrices such that \( A \in S^t \) if and only if \( A^t \in S \).
    \item Let \( \mathcal{N} = S \cup S^t \).
    \item Let \( \mathcal{P} \) be the set of matrices with at most one nonzero entry in each row and no column of all zeros.
    \item Let \( \mathcal{D} \) be the set of square diagonal matrices with strictly positive diagonal entries and zeros elsewhere.
\end{enumerate}

We now prove Theorem~\ref{thm:collection of theorems} (restated below for convenience):

\begin{fixedtheorem}
Let a non-catalytic reaction network have a stoichiometric matrix of the form \( PND \), where \( P \in \mathcal{P} \), \( N \in \mathcal{N} \), and \( D \in \mathcal{D} \), and where the matrix product \( PND \) is well-defined. If, in addition, the \( \mathcal{RI} \)-graph of the network is strongly connected, then the corresponding system is weakly contractive.
\end{fixedtheorem}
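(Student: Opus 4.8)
The plan is to peel off the outer factors $P$ and $D$, reduce everything to the single matrix $N$, and then dispatch to the weak-contractivity results already established according to whether $N \in S$ or $N \in S^t$. First I would discard $D$: since $D \in \mathcal{D}$ is a positive diagonal matrix, replacing an admissible kinetics $R$ by $DR$ (again admissible, as positive scaling preserves all the sign and monotonicity conditions on reaction rates) shows that $PND$ and $PN$ generate exactly the same family of systems, so I may assume $D = I$. Any zero row of $P$ produces an identically constant species that can be projected out, after which $P$ has exactly one nonzero entry per row and no zero column, matching the hypotheses of Theorem~\ref{thm:P*gamma network strongly monotone}. Because the columns of $PN$ and of $N$ index the same reactions, while $P$ merely relabels or duplicates species rows and $D$ only rescales columns, the $\mathcal{RI}$-graph is unchanged and its strong connectivity passes to $N$. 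Thus it suffices to exhibit, in each case, a cone $K$ with respect to whose lift $N$ is strongly monotone: Theorem~\ref{thm:P*gamma network strongly monotone} upgrades this to strong monotonicity of $PN$ with respect to $PK$, and Theorem~\ref{thm:strongly_monotone_lift_to_weakly contractive_general_2} converts strong monotonicity of the lift into weak contractivity on each stoichiometric compatibility class.

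For the case $N \in S$ (at most two nonzero entries per column, all in $\{-1,0,1\}$), I would discard the zero columns and take $K$ to be the set of signed standard basis vectors $\{\pm e_i\}$. Each such vector is permissible because it has a single nonzero coordinate, and for any column with exactly two $\pm 1$ entries the operation $e_i \mapsto e_i \mp \Gamma_j$ carries one signed basis vector to another; hence $K$ is viable in the sense of Definition~\ref{def:viable set}. This is precisely the construction of Theorem~\ref{thm:type C strongly monotone}, so with the strongly connected $\mathcal{RI}$-graph in hand Theorem~\ref{thm:output_M_to_strong_monotone} yields strong monotonicity and Corollary~\ref{cor:type C is weakly contractive} yields weak contractivity, the cone being of cross-polytope or simplex type. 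The one subtlety is a column with a single $\pm 1$ entry (a pure inflow or outflow): there the required operation produces the origin, so I would adjoin $0$ to the viable set (it is permissible and absorbing under every reaction) and invoke the $0 \in M'$ branch of Theorem~\ref{thm:output_M_to_strong_monotone}; equivalently one adjoins a single dummy species to turn each such column into a genuine two-entry, type C column.

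For the case $N \in S^t$ (at most two nonzero entries per row, all in $\{-1,0,1\}$), $N$ need not itself be aligned, but any two columns sharing a species agree there up to sign (all shared entries lie in $\{-1,1\}$), so Lemma~\ref{lem:cube type to aligned} applies and I may factor $N = P_2 N_2$ with $N_2$ aligned and still having at most two nonzero entries per row. Theorem~\ref{2 in each row aligned weakly contractive} then gives weak contractivity of $N_2$: if its columns are linearly independent it is cubical (Theorem~\ref{thm:cubicals are weakly contractive}), while if they are linearly dependent it satisfies the $R$-graph condition and is of type A (Theorem~\ref{thm:dependent reaction monotone is contractive}); in both subcases the conclusion comes from strong monotonicity of the lift. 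Since $PN = (P P_2) N_2$ with $P P_2 \in \mathcal{P}$, the matrix $PN$ is alignable with alignment $N_2$, and Corollary~\ref{cor:align to general} transfers the strong monotonicity (hence, via Theorem~\ref{thm:strongly_monotone_lift_to_weakly contractive_general_2}, the weak contractivity) to $PN$ and therefore to $PND$.

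The conceptual content lives entirely in the earlier theorems, so I expect the main obstacle to be bookkeeping rather than a genuinely new idea. The most delicate points are: verifying that each viable set really has affine hull of full stoichiometric dimension so that Theorem~\ref{thm:output_M_to_strong_monotone} is applicable, and in particular treating the degenerate single-entry-column subcase of $N \in S$ rigorously, where one must justify either the origin-in-the-viable-set symmetrization branch or the dummy-species augmentation and confirm that the latter does not alter the weak-contractivity conclusion for the original species; and checking that the composed reductions $N_2 \mapsto P_2 N_2 = N \mapsto PN$ keep every intermediate $P$-factor inside $\mathcal{P}$ (one nonzero entry per row, no zero column), so that Theorem~\ref{thm:P*gamma network strongly monotone} and Corollary~\ref{cor:align to general} are legitimately invoked. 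Once these side conditions are dispatched, the theorem follows simply by assembling the two cases $N \in S$ and $N \in S^t$.
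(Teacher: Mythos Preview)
Your proposal is correct and takes essentially the same route as the paper: discard $D$, split on $N \in S$ versus $N \in S^t$, align, and invoke Corollary~\ref{cor:type C is weakly contractive} or Theorem~\ref{2 in each row aligned weakly contractive} together with Theorem~\ref{thm:P*gamma network strongly monotone}; your $N \in S^t$ case is identical to the paper's. The one place you diverge is the single $\pm 1$ column when $N \in S$, which you correctly flag as the delicate step and propose to handle by adjoining $0$ to the viable set or by a dummy species. The paper dissolves this difficulty with a one-line trick instead: choose a new diagonal $D' \in \mathcal{D}$ that multiplies each single-entry column of $N$ by $2$, so that every column of $ND'$ has $\ell_1$-norm exactly $2$ and $ND'$ is literally type~C (recall the definition allows entries in $\{-2,-1,0,1,2\}$). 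Then $e_i - 2e_i = -e_i$ stays inside $\{\pm e_j\}$, Theorem~\ref{thm:type C strongly monotone} and Corollary~\ref{cor:type C is weakly contractive} apply verbatim, and no reasoning about the origin in the viable set or about augmenting the species is needed. Your workarounds can be pushed through, but the rescaling is precisely what makes the bookkeeping you identified disappear.
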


\begin{proof}
Multiplying the reaction vectors by a nonzero scalar (via the diagonal matrix \( D \)) does not affect any of the contractivity conclusions. Under our kinetic assumptions, the systems defined by \( PN \) and \( PND \) generate the same family of differential equations. Thus, \( PN \) is weakly contractive if and only if \( PND \) is.

First, suppose \( N \in S^t \). Then, by Lemma~\ref{lem:cube type to aligned}, we can write \( PN = PP'N' \), where \( P' \in \mathcal{P} \) and \( N' \in S^t \subseteq \mathcal{N} \) is aligned. Since \( PP' \in \mathcal{P} \), the result follows by applying Theorem~\ref{2 in each row aligned weakly contractive} and Theorem~\ref{thm:P*gamma network strongly monotone} to the matrix \( PP'N' \).

Now suppose \( N \in S \). In this case, we can find a diagonal matrix \( D \in \mathcal{D} \) such that \( ND \) is of type C (for example, by multiplying each column of \( N \) that has a single nonzero entry by 2). Then, by Corollary~\ref{cor:type C is weakly contractive}, \( PND \) is weakly contractive. This completes the proof.
\end{proof}

\section{Algorithm to Check the Factorization}

A natural question is the following: given a matrix \( \Gamma \), can we factor it as \( \Gamma = PND \)? We now present a necessary and sufficient algorithm for the existence of such a factorization. (Code available at \url{https://github.com/alon-duvall/testing_PND_factorization}.)

Note that if \( N \in S \), then \( N \) is aligned, and if \( N \in S^t \), then by Lemma~\ref{lem:cube type to aligned}, it is alignable. Thus, we will assume that we are given a factorization \( \Gamma = PND \) as in Theorem~\ref{thm:collection of theorems}, where \( N \) is aligned.

\begin{lem}
\label{lem:equivalence of row relation matrices}
The nonzero indices in each column of the matrix \( P \) correspond bijectively with an equivalence class of the rows of \( \Gamma \), as defined in Definition~\ref{def:equivalence for rows}.
\end{lem}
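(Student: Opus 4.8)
The plan is to exploit the rigid structure that $P\in\mathcal P$ imposes. Since $P$ has at most one nonzero entry per row and, by the standing assumption that every species participates, every row of $\Gamma=PND$ is nonzero, it follows that every row of $P$ has \emph{exactly} one nonzero entry. Writing $M=ND$ (a positive column rescaling of $N$, so that $M$ has the same support and sign pattern as $N$, and hence the same coordinate equivalence on its rows) and letting $c(i)$ denote the unique column of $P$ that is nonzero in row $i$, I would first observe that row $i$ of $\Gamma$ equals $P_{i,c(i)}$ times row $c(i)$ of $M$. Thus every row of $\Gamma$ is a nonzero scalar multiple of a row of $M$, and two rows $i,j$ of $\Gamma$ lie in the same column of $P$ exactly when $c(i)=c(j)$. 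The easy direction is then immediate: if $c(i)=c(j)$, the rows $i$ and $j$ are scalar multiples of one common row of $M$, so they have the same support and a constant relative sign, which is precisely coordinate equivalence in the sense of Definition~\ref{def:equivalence for rows}.

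The crux, and the step I expect to be the main obstacle, is the converse: I must rule out that two rows coming from \emph{different} columns of $P$ are coordinate equivalent. This reduces to the key claim that \emph{an aligned matrix has no two distinct coordinate-equivalent rows}. To prove it, suppose rows $p\neq q$ of $N$ (equivalently of $M$) were equivalent, so they share a common support $T$ and there is a fixed sign $s$ with $\mbox{sign}(N_{pk}N_{qk})=s$ for every $k\in T$; note $T\neq\emptyset$ since no row is zero. I would split on $|T|$. If $|T|\ge 2$, pick distinct $k_1,k_2\in T$ and compare columns $N_{k_1}$ and $N_{k_2}$: both are nonzero in coordinates $p$ and $q$, and the constant-sign relation forces their relative sign at $p$ to equal their relative sign at $q$ (the two factors of $s$ cancel). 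Hence $k_1$ and $k_2$ share two coordinates with the \emph{same} relative sign, contradicting condition~(2) of the definition of an aligned matrix. If $|T|=1$, say $T=\{k_0\}$, then both $p$ and $q$ are coordinates nonzero in column $k_0$ and zero in every other column, so column $k_0$ has two ``private'' coordinates, contradicting condition~(3). Either way we reach a contradiction, proving the claim.

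With the claim in hand I would finish the converse: if $c(i)\neq c(j)$ yet rows $i,j$ of $\Gamma$ were equivalent, then stripping the nonzero scalars $P_{i,c(i)}$ and $P_{j,c(j)}$ shows rows $c(i)$ and $c(j)$ of $M$ have equal support and constant relative sign, i.e.\ are distinct coordinate-equivalent rows of the aligned matrix --- contradicting the claim. Finally I would assemble the bijection explicitly: the assignment sending each column $c$ of $P$ to the set $\{\,i : P_{ic}\neq 0\,\}$ is well defined because every row of $P$ is nonzero; its image lies within a single equivalence class by the easy direction; distinct columns yield \emph{different} classes by the converse; and every row (hence every class) is accounted for since $c(i)$ is defined for all $i$. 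This yields the claimed bijection between the columns of $P$ and the coordinate-equivalence classes of the rows of $\Gamma$. One minor point I would address at the outset is the reading of condition~(2) of Definition~\ref{def:equivalence for rows}: it is to be understood on the common support pinned down by condition~(1), i.e.\ as the assertion that the entrywise products share a single common sign across that support.
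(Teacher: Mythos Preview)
Your proof is correct and follows essentially the same approach as the paper: both hinge on the observation that in an aligned matrix no two distinct rows can be coordinate-equivalent, and then transport this back to $\Gamma$ via the structure of $P$. Your treatment is in fact more thorough than the paper's, which only invokes condition~(2) of alignedness (implicitly assuming $|T|\ge 2$); you correctly identify that the singleton-support case $|T|=1$ requires condition~(3) instead, and you spell out the bijection explicitly rather than leaving it implicit.
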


\begin{proof}
First, observe that the diagonal matrix \( D \) does not affect the equivalence relation among the rows of \( \Gamma \), so we may assume without loss of generality that \( D \) is the identity matrix.

If any two rows of \( N \) belonged to the same equivalence class, this would contradict the assumption that \( N \) is aligned: two columns would then have two rows with either the same or opposite signs, violating the definition of an aligned matrix. Thus, in an aligned matrix \( N \), each equivalence class contains exactly one row.

Therefore, when computing the product \( PN \), each column of \( P \) corresponds to a distinct equivalence class of the rows of \( \Gamma \), and these equivalence classes are precisely those determined by Definition~\ref{def:equivalence for rows}. In other words, the equivalence classes of the rows of \( \Gamma \) coincide with the nonzero row indices of the columns of \( P \) when \( N \) is aligned.
\end{proof}

Hence, when a factorization \( \Gamma = PND \) exists with \( N \) aligned, we can proceed by first computing the equivalence classes of the rows of \( \Gamma \) using Definition~\ref{def:equivalence for rows}. These equivalence classes determine, for each column of \( P \), which rows are nonzero.

Moreover, for a given equivalence class, the corresponding rows in \( \Gamma = PN \) must be scalar multiples of one another; that is, they span a one-dimensional subspace. Therefore, after determining the equivalence classes, we can select one column of \( \Gamma \) whose support matches the desired row indices as an initial guess for the corresponding column of \( P \); denote this initial guess by \( P' \).

\begin{lem}
If \( \Gamma = PND \), where \( P \) has disjoint column supports and \( N \) is aligned, then \( \Gamma \) can also be factored as \( \Gamma = P'N' \), where \( P' \) has columns corresponding to the equivalence classes of the rows of \( \Gamma \), and each column is constructed by selecting a representative column from \( \Gamma \). The matrices \( P \) and \( P' \) are the same up to a permutation of columns and nonzero scalar multiples of their columns.
\end{lem}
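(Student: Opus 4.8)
The plan is to normalize away $D$, use Lemma~\ref{lem:equivalence of row relation matrices} to pin down the column supports of $P$ as the row equivalence classes, and then read off each column of $P'$ by restricting a suitably chosen column of $\Gamma$ to the rows of one class, showing the result is a nonzero scalar multiple of a column of $P$. First I would absorb $D$: since $D$ is positive diagonal, $\Gamma = P(ND)$, and rescaling the columns of $N$ by positive constants changes neither the row equivalence classes of Definition~\ref{def:equivalence for rows} (each product $(\Gamma_k)_i(\Gamma_k)_j$ keeps its sign and the zero pattern is preserved) nor the column supports of $P$. So I may take $D = I$ and write $\Gamma = PN$.

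Next I would invoke Lemma~\ref{lem:equivalence of row relation matrices} to identify the column supports of $P$ with the equivalence classes $E_1,\dots,E_p$ of the rows of $\Gamma$. Because $P$ has disjoint column supports, each row of $P$ has a single nonzero entry; writing $c(i)$ for the column in whose support row $i$ lies, I get $\Gamma_{i,\cdot} = P_{i,c(i)}\,N_{c(i),\cdot}$, so every row of $\Gamma$ in class $E_j$ is a scalar multiple of the one row $N_{j,\cdot}$. Equivalently, $\Gamma$ restricted to the rows of $E_j$ is the rank-one outer product $P^{(j)}N_{j,\cdot}$, where $P^{(j)}$ is the $j$-th column of $P$.

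Then I would construct $P'$ and $N'$. Since every species participates in some reaction, $\Gamma$ has no zero rows, hence $N_{j,\cdot}\neq 0$ and there is an index $k_j$ with $N_{j,k_j}\neq 0$; for this $k_j$ one has $\Gamma_{i,k_j}=P_{i,j}N_{j,k_j}$, which is nonzero on all of $E_j$. Defining the $j$-th column of $P'$ to be column $k_j$ of $\Gamma$ restricted to the rows of $E_j$ (and zero elsewhere) gives $P'^{(j)} = N_{j,k_j}P^{(j)}$, a nonzero scalar multiple of $P^{(j)}$. Collecting these over $j$ yields $P' = P\Pi\Lambda$ for a permutation matrix $\Pi$ (recording the labeling of classes) and an invertible diagonal matrix $\Lambda = \diag(N_{j,k_j})$, which is exactly the claimed agreement up to column permutation and nonzero scaling. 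Setting $N' = \Lambda^{-1}\Pi^{-1}N$ then gives $P'N' = P\Pi\Lambda\Lambda^{-1}\Pi^{-1}N = PN = \Gamma$.

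The single point that needs care is the identity $P'^{(j)} = N_{j,k_j}P^{(j)}$: it relies on the at-most-one-nonzero-per-row structure of $P$ together with the rank-one structure of $\Gamma$ within each class, and on the existence of a representative column, i.e.\ that $N$ has no zero rows. Once this is established, the permutation-and-scaling bookkeeping and the definition of $N'$ are routine linear algebra.
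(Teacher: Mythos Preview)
Your argument is correct and follows essentially the same approach as the paper: both reduce to $D=I$, invoke Lemma~\ref{lem:equivalence of row relation matrices} to identify the column supports of $P$ with the row equivalence classes of $\Gamma$, and then conclude that each column of $P'$ built from a representative column of $\Gamma$ is a nonzero scalar multiple of the corresponding column of $P$. Your write-up is in fact more explicit than the paper's---you spell out the rank-one structure $\Gamma|_{E_j} = P^{(j)}N_{j,\cdot}$ and the identity $P'^{(j)} = N_{j,k_j}P^{(j)}$, whereas the paper simply asserts the existence of the diagonal scaling $D'$ without deriving it from the representative-column construction.
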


\begin{proof}
If \( \Gamma = PND \) with \( N \) aligned, then each row of \( N \) lies in a distinct equivalence class, and the same holds for \( ND \), since scaling by a diagonal matrix does not alter the equivalence class structure. Thus, \( P \) encodes the equivalence classes of the rows of \( \Gamma \). 

Moreover, we can write
\[
PND = (P D')(D'^{-1} N D)
\]
for any diagonal matrix \( D' \) with nonzero entries. This gives an alternative factorization \( \Gamma = P'N' \), where \( P' \) consists of some nonzero columns of \( P \), potentially scaled. Since permuting the columns of \( P' \) and the corresponding rows of \( N' \) does not affect the product \( P'N' \), we conclude that \( P \) and \( P' \) differ only by a column permutation and column-wise scaling.
\end{proof}

\begin{lem}
Suppose we can factor \( \Gamma = P'N' \), where \( P' \) encodes the row equivalence relation. Then \( \Gamma \) admits a factorization \( \Gamma = PND \) if and only if there exist positive diagonal matrices \( D_1 \) and \( D_2 \) such that \( D_1 N' D_2 = N'' \), where \( N'' \in S \cup S^t \).
\end{lem}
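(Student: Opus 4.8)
The plan is to prove both implications by directly manipulating the factorizations, treating the reverse direction as essentially immediate and concentrating effort on the forward one.

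For the reverse implication, suppose $D_1 N' D_2 = N''$ with $N'' \in S \cup S^t$ and $D_1, D_2$ positive diagonal. Then $N' = D_1^{-1} N'' D_2^{-1}$, so
\[
\Gamma = P' N' = (P' D_1^{-1})\, N''\, (D_2^{-1}).
\]
I would set $P := P' D_1^{-1}$, $N := N''$, and $D := D_2^{-1}$. Since $P'$ encodes the row equivalence relation (each row of $\Gamma$ lies in exactly one class), it has at most one nonzero entry per row and no zero column, and right-multiplication by the invertible diagonal $D_1^{-1}$ merely rescales columns, keeping $P \in \mathcal{P}$. Also $D_2^{-1} \in \mathcal{D}$ because $D_2$ is positive diagonal, and $N'' \in \mathcal{N}$ by hypothesis, so $\Gamma = PND$ is the required factorization.

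For the forward implication, suppose $\Gamma = PND$ with $P \in \mathcal{P}$, $N \in \mathcal{N}$, $D \in \mathcal{D}$. First I would reduce to an aligned right factor. If $N \in S$ then $N$ is already aligned; if $N \in S^t$, then $N$ has at most two nonzero entries per row, so by Lemma~\ref{lem:cube type to aligned} it is alignable, $N = P_N N_N$ with $P_N \in \mathcal{P}$ and $N_N$ aligned, and $N_N$ itself lies in $S^t$ (entries in $\{-1,0,1\}$, at most two per row). Absorbing $P_N$ into $P$, and using that a product of two matrices in $\mathcal{P}$ again lies in $\mathcal{P}$, I obtain $\Gamma = \tilde P\,\tilde N\, D$ with $\tilde P \in \mathcal{P}$ and $\tilde N \in S \cup S^t$ aligned. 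Any matrix in $\mathcal{P}$ has at most one nonzero entry per row, hence disjoint column supports, so the preceding lemma applies and shows $\tilde P$ equals $P'$ up to a permutation and nonzero rescaling of columns; write $\tilde P = P' Q$ with $Q$ a monomial matrix (one nonzero entry per row and column). Because $P'$ has disjoint nonzero column supports it has full column rank, hence is left-cancellable, so from $P' Q \tilde N D = P' N'$ I get $N' = Q\,\tilde N\, D$.

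It remains to choose the positive diagonals. Factor the monomial matrix as $Q = \Pi D_P$, where $\Pi$ is a permutation matrix and $D_P$ is an invertible (possibly sign-indefinite) diagonal matrix. Taking $D_2 := D^{-1}$ (positive, since $D \in \mathcal{D}$) cancels the column scaling, and taking $D_1 := \Pi\,|D_P|^{-1}\,\Pi^{-1}$ (a positive diagonal) cancels the magnitudes of $D_P$, leaving
\[
D_1 N' D_2 = \Pi\,|D_P|^{-1}\Pi^{-1}\,\Pi D_P\,\tilde N = \Pi\,\bigl(\operatorname{sgn}(D_P)\bigr)\,\tilde N,
\]
where $\operatorname{sgn}(D_P)$ is the diagonal matrix with entries $\pm 1$. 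The key observation — and the step I expect to require the most care — is that $S \cup S^t$ is stable under exactly these residual operations: multiplying rows by $\pm 1$ preserves both the entry set $\{-1,0,1\}$ and the number of nonzeros in each column (and each row), and permuting rows likewise preserves membership in $S$ and in $S^t$. Hence $\Pi(\operatorname{sgn}(D_P))\tilde N \in S \cup S^t$, so $N'' := D_1 N' D_2$ lies in $S \cup S^t$, completing the forward direction. The main obstacle is thus the bookkeeping in this direction: correctly isolating the monomial relation $\tilde P = P' Q$ from the preceding lemma (including the preliminary re-alignment needed when $N \in S^t$, where $N$ is not itself aligned), and then verifying that the leftover transformation separating $N'$ from a matrix of $S \cup S^t$ splits precisely into a positive column scaling, a positive row scaling, and a sign-and-permutation part that leaves $S \cup S^t$ invariant.
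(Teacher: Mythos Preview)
Your argument is correct and follows the same skeleton as the paper's proof: in both cases one relates the given $P$ from a $PND$ factorization to $P'$ via the preceding lemma, cancels the injective $P'$, and reads off the diagonal scalings. Two points of comparison are worth noting.

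First, you fold the alignment reduction (passing from $N\in S^t$ to an aligned $N_N\in S^t$ via Lemma~\ref{lem:cube type to aligned}) into the proof itself; the paper instead imposes ``$N$ aligned'' as a standing assumption in the paragraph before the lemma. This is purely organizational.

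Second, and more substantively, you handle the monomial discrepancy $\tilde P = P'Q$ honestly: you split $Q=\Pi D_P$ into a permutation and an invertible (possibly sign-indefinite) diagonal, absorb the magnitudes into the positive $D_1$, and then observe that the residual row-permutation and row-sign-change $\Pi\,\mathrm{sgn}(D_P)$ preserve membership in $S\cup S^t$. The paper instead permutes the columns of $P'$ (and correspondingly the rows of $N'$) and then simply asserts that $P$ and the permuted $P'$ differ by a \emph{positive} diagonal; the positivity is not justified there, and strictly speaking the permutation step alters the $N'$ about which the lemma is stated. Your version avoids both issues by leaving $N'$ fixed and pushing the permutation and signs into $N''$, at the modest cost of having to check the (easy) closure of $S\cup S^t$ under those operations.
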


\begin{proof}
If we can factor \( \Gamma = P' D_1^{-1} N'' D_2^{-1} \), then this is already a factorization of the form \( PND \), with \( P = P' D_1^{-1} \), \( N = N'' \), and \( D = D_2^{-1} \).

Conversely, suppose we are given a factorization \( \Gamma = PND \). By Lemma~\ref{lem:equivalence of row relation matrices}, the matrix \( P \) encodes the same row equivalence relation as \( P' \), so \( P \) and \( P' \) must be the same up to column scaling and permutation. Permute the columns of \( P' \) and the corresponding rows of \( N' \) so that \( P \) and \( P' \) differ only by a positive diagonal scaling and $P'N'$ remains unchanged. Then there exists a positive diagonal matrix \( D_1 \) such that \( P = P' D_1^{-1} \). Substituting into the factorization, we have:
\[
P'N' = P D_1 N' = PND.
\]
Rewriting, we obtain:
\[
P D_1 N' D^{-1} = PN.
\]
Since \( P \) is injective (i.e., has full column rank), it follows that
\[
D_1 N' D^{-1} = N.
\]
Because \( N \in S \cup S^t \), we conclude that \( D_1 N' D^{-1} \in S \cup S^t \), as desired.
\end{proof}

\begin{algorithm}
\caption{Algorithm to check whether a matrix \( \Gamma \) can be factored as \( PND \)}\label{alg:algopnd}
\begin{algorithmic} 
\Require Matrix \( \Gamma \)

\State Compute the equivalence classes of the rows of \( \Gamma \) as in Definition~\ref{def:equivalence for rows}.
\State Construct a factorization \( \Gamma = P'N' \), where \( P' \) has one column per equivalence class, with nonzero entries only in the corresponding rows. Set the values in each column of \( P' \) by arbitrarily selecting a representative column from \( \Gamma \).
\If {the submatrix of \( \Gamma \) restricted to the rows of any equivalence class has column space of dimension two or more}
    \State \Return No factorization exists.
\EndIf

\State Attempt to find diagonal matrices \( D' \) and \( D'' \) such that \( D'N D'' \) consists only of entries in \( \{ -1, 0, 1 \} \).
\State Normalize by assuming the first entry of \( D' \) is 1, and recursively solve for the remaining entries using known values in shared rows or columns of \( N \).
\If {this recursive computation produces inconsistent values}
    \State \Return No factorization exists.
\EndIf

\State Check whether the resulting matrix \( N \in S \cup S^t \).
\If {this condition fails}
    \State \Return No factorization exists.
\EndIf

\State Let \( N \) be the matrix \( D'ND'' \), now with entries in \( \{ -1, 0, 1 \} \).
\State Set \( P = P'(D')^{-1} \) and \( D = (D'')^{-1} \).
\State \Return Factorization successful; return matrices \( P \), \( N \), and \( D \).
\end{algorithmic}
\end{algorithm}

Based on this, our algorithm proceeds by first factoring \( \Gamma = P'N' \) through the equivalence relation on the rows of \( \Gamma \). We then need to determine whether there exist diagonal matrices \( D' \) and \( D \) such that
\[
P'N' = P D' N D
\]
with \( N \) aligned. This is equivalent to checking whether there exist diagonal matrices \( D' \) and \( D \), both with strictly positive entries, such that \( D'ND \) has all entries in \( \{ -1, 0, 1 \} \). In this case, \( D'ND \) would be an aligned matrix.

Without loss of generality, we can assume all entries of \( D' \) and \( D \) are positive. Moreover, we may rescale them by any nonzero scalar \( r \) and \( 1/r \), respectively, so we can assume that the first diagonal entry of \( D' \) is 1.

To determine whether the desired scaling is possible, we attempt to scale the entries of \( N \) inductively. We begin by choosing diagonal values that normalize the first row of \( N \) so that all of its nonzero entries have absolute value 1. We then propagate through the matrix, using the values already fixed to determine the required diagonal entries in \( D' \) and \( D \) that ensure each nonzero entry of \( D'ND \) is \( \pm 1 \).

If this procedure succeeds, we conclude that \( \Gamma \) can indeed be factored as \( \Gamma = P D' N D \), with \( N \) aligned. If the process fails at any step, then no such factorization \( PND \) exists.

Finally, we check whether \( N \in S \cup S^t \). This completes a necessary and sufficient computational procedure to determine whether \( \Gamma \) admits a factorization of the form \( \Gamma = PND \).

\section{Biochemical Examples}
\label{sec:biochemical examples}

Our theorems apply to a wide range of biochemical systems. Although these examples arise in different contexts and employ diverse mathematical techniques to establish convergence, we show that weak contractivity—and consequently, global convergence within stoichiometric compatibility classes—follows directly from Theorem \ref{thm:collection of theorems}. 

To conclude that all trajectories within a stoichiometric compatibility class converge to a single equilibrium, we must sometimes verify a technical condition known as \textit{persistence}, which ensures that no species is lost asymptotically. Even in the absence of persistence, weak contractivity significantly constrains the system’s long-term behavior.

\subsection{Processive Phosphorylation}

Our theorem applies to models of processive phosphorylation, as studied in \cite{EITHUN20171}. Phosphorylation is a biochemical process in which an enzyme facilitates the addition of phosphate groups to a substrate. A model from \cite{EITHUN20171} is given by:

\begin{align}
    &S_0 + K \rightleftharpoons S_0 K \rightarrow S_1 K \rightarrow \dots \rightarrow S_{n-1} K \rightarrow S_n + K, \\
    &S_n + F \rightleftharpoons S_n F \rightarrow \dots \rightarrow S_2 F \rightarrow S_1 F \rightarrow S_0 F.
\end{align}

The $\mathcal{RI}$-graph of this network is strongly connected, and each species appears at most twice, ensuring weak contractivity on the interior of \( \mathbb{R}^n_{\geq 0} \) by Theorem \ref{thm:collection of theorems}. This, along with persistence as established in \cite{EITHUN20171}, allows us to immediately conclude global convergence on the stoichiometric compatibility classes.

\subsection{PCR Kinetics: Primer Annealing}

The reaction network modeling the primer annealing step in PCR kinetics, as given in \cite{https://doi.org/10.1002/bit.20617}, also satisfies our conditions. In PCR, double-stranded DNA is denatured, producing two template strands \( T_1 \) and \( T_2 \) to which primers \( P_1 \) and \( P_2 \) anneal. The annealing reactions are:

\begin{align}
    P_1 + T_1 &\rightleftharpoons H_1, \\
    P_2 + T_2 &\rightleftharpoons H_2, \\
    T_1 + T_2 &\rightleftharpoons U, \\
    P_1 + P_2 &\rightleftharpoons D.
\end{align}

Since each species appears at most twice, Theorem \ref{thm:collection of theorems} applies, and the network is weakly contractive on the interior of \( \mathbb{R}^n_{\geq 0} \). By Corollary \ref{cor:global fixed point} this system is globally convergent.

\subsection{RKIP Network}

A model of the Raf-1 kinase inhibitor protein (RKIP), part of the EGF signaling pathway, is studied in \cite{Angeli2010-hj}. The corresponding reaction network is:

\begin{align}
    \mbox{Raf-1} + \mbox{RKIP} &\leftrightarrow \mbox{Raf-1/RKIP},\\
    \mbox{Raf-1/RKIP} + \mbox{ERK-PP} &\leftrightarrow \mbox{Raf-1/RKIP/ERK-PP}, \\
    \mbox{Raf-1/RKIP/ERK-PP} &\rightarrow \mbox{Raf-1} + \mbox{ERK} + \mbox{RKIP-P}, \\
    \mbox{MEK-PP} + \mbox{ERK} &\leftrightarrow \mbox{MEK-PP/ERK}, \\
    \mbox{MEK-PP/ERK} &\rightarrow \mbox{MEK-PP} + \mbox{ERK-PP}, \\
    \mbox{RKIP-P} + \mbox{RP} &\leftrightarrow \mbox{RKIP-P/RP}, \\
    \mbox{RKIP-P/RP} &\rightarrow \mbox{RKIP} + \mbox{RP}.
\end{align}

In \cite{Angeli2010-hj}, this network is shown to be persistent. Since each species appears at most twice and \( \Gamma \) has entries in \( \{-1,0,1\} \), the network is alignable. By Theorem \ref{thm:collection of theorems}, it is weakly contractive on the interior of \( \mathbb{R}^n_{\geq 0} \), recovering the result from \cite{Angeli2010-hj} that all initial conditions in \( \operatorname{int}(\mathbb{R}_{\geq 0}^{11}) \) converge to a unique equilibrium.

\subsection{Electron Transfer Networks}

Electron transfer networks, as modeled in \cite{article}, describe systems where substrates exist in reduced or oxidized states and exchange electrons upon interaction. These models arise in biological contexts, such as mitochondrial electron transport. The general reaction form is:

\[
B_i + A_j \rightleftharpoons A_i + B_j.
\]

For any number of species \( A_i, B_i \), these networks can be factorized as \( PND \) by treating \( \{B_i, -A_i\} \) as a single species for each \( i \), where \( N \) corresponds to a type C network. It follows that these networks are weakly contractive. By Corollary \ref{cor:global fixed point} this system is globally convergent.

\section{Discussion}

We have introduced new classes of monotone systems, along with a novel method for proving global convergence via contractivity. While monotonicity and contractivity are typically viewed as distinct tools for analyzing the qualitative behavior of dynamical systems, this paper highlights a useful connection between them. Our approach reveals that many existing results in the literature share a common structural feature—namely, the factorization described in Theorem~\ref{thm:collection of theorems}. An interesting direction for future work is to determine precisely how many aligned matrices give rise to weakly contractive systems.

\appendix

\section{Polytope Geometry}
\label{section_appendix_polytope_terms}
Here we lay out a few basic definitions related to polytopes and cones.

\begin{define}
    A \textit{polytope} is the convex hull of a finite number of points in $\mathbb{R}^n$. A \textit{cone} is a set of points in $\mathbb{R}^n$ which is closed under addition and multiplication by nonnegative scalars. 
\end{define}

\begin{define}
    The \textit{affine hull} of a set $S \subset \mathbb{R}^n$ is the set $\mbox{affHull}(S) = \{ \sum_i \alpha_i x_i \st  x_i \in S, \alpha_i \in \mathbb{R}, \sum_i \alpha_i = 1 \}$. The \textit{dimension} of a convex set is the dimension of its affine hull. The \textit{relative interior} of a set $S$ is its interior in the subspace topology of $\mbox{affHull}(S)$.
\end{define}

A \textit{ray} is a one-dimensional cone.

\begin{define}
    An \textit{extreme point} of a polytope $P$ is any point $x \in P$ that satisfies if there exists $y,z \in P$ and $0 \leq \gamma \leq 1$ such that $x = \gamma y + (1-\gamma)z $, then $x=y=z$.
\end{define}

\begin{define}
    Suppose we have a cone $K$. An \textit{extreme ray} $k$ of $K$ is a ray contained in  $K$, satisfying the property that if $\exists x,y \in K$ and $0 \leq \gamma \leq 1$ such that $\gamma x + (1-\gamma) y \in k$, then we must have $x,y \in k$.
\end{define}

\begin{define}
    A \textit{face} of a polytope or cone \( P \) is any set of the form \( P \cap \mathcal{H} \), where \( \mathcal{H} \) is a supporting hyperplane of \( P \), i.e., a hyperplane that does not intersect the relative interior of \( P \).

    A \textit{proper face} of a cone \( K \) is a face of the form \( K \cap \mathcal{H} \) that is not equal to \( K \) itself and does not consist solely of the origin.
\end{define}

\begin{define}
    We define a \textit{cube} to be the set of $2^n$ vectors in $\mathbb{R}^n$ consisting of every possible vector $v_i$ such that for all $1 \leq j \leq n$ we have that $(v_i)_j \in \{0,1\}$. Here $n$ can be any positive integer.
\end{define}

\begin{define}
    We say a polytope $K \subseteq \mathbb{R}^k$ is \textit{cubical} if there exists an injective affine transformation $T: \mathbb{R}^k \rightarrow \mathbb{R}^n$ such that $T(K)$ is a cube.
\end{define}

\begin{define}
    A \textit{cubical cone} is any pointed and polyhedral cone such that it has a cubical cross-section.
\end{define}

We will need a lemma to characterize a certain operation:

\begin{lem}
\label{lem:extrem points characterize}

The following two operations on a finite set of points $ A =\{v_1,v_2,..., v_n\} \subset \mathbb{R}^n$ are equivalent:

\begin{enumerate}
    \item Repeatedly remove all points $v_i$ that can be expressed as a convex combination of the other points (removing the points in an unspecified order).
    \item Take the convex hull of $A$ and then take the extreme points of this convex hull
\end{enumerate}
    
\end{lem}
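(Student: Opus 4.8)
The plan is to route both operations through the classical characterization of the vertices of a polytope: for a finite set $A$, the extreme points of $\conv(A)$ are exactly those $v \in A$ that cannot be written as a convex combination of the points of $A \setminus \{v\}$. I would first record this characterization, which is standard. The forward direction notes that a representation $v = \sum_i \lambda_i w_i$ with $w_i \in A\setminus\{v\}$ and distinct points exhibits $v$ as a proper (at least two-term) convex combination of points of $\conv(A)$, so $v$ is not extreme; the converse expands a splitting $v = \gamma x + (1-\gamma)y$ over $A$ and solves for $v$ to place it in $\conv(A\setminus\{v\})$. Granting this, operation~(2) returns precisely the set $E$ of points of $A$ that are not convex combinations of the remaining points.

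The heart of the argument is an invariance claim: each removal in operation~(1) leaves the convex hull unchanged. Indeed, if $v$ is a convex combination of the remaining points of the current set $B$, then $\conv(B \setminus \{v\}) = \conv(B)$. By induction on the number of removals, the convex hull of the set present at every stage equals $\conv(A)$.

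With invariance in hand I would establish two things. First, no point of $E$ is ever eligible for removal: if $v \in E$ and $B$ is an intermediate set with $\conv(B) = \conv(A)$ and $v \in B$, then $B \setminus \{v\} \subseteq \conv(A)$ and the extremality of $v$ in $\conv(A)$ forces $v \notin \conv(B \setminus \{v\})$, so $v$ cannot be removed; hence every element of $E$ survives to the end. Second, the process terminates, since each step strictly decreases the cardinality of a finite set, arriving at a set $B^\star$ in which no point is a convex combination of the others. Because $\conv(B^\star) = \conv(A)$, applying the characterization to $B^\star$ shows every element of $B^\star$ is extreme in $\conv(A)$, i.e. $B^\star \subseteq E$; combined with $E \subseteq B^\star$ this yields $B^\star = E$.

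The main obstacle is the phrase ``in an unspecified order'': I must show the removal process is confluent. This is exactly what the previous step delivers, since the terminal set is forced to equal $E$ regardless of the order of removals. The same reasoning covers the batched reading (deleting all eligible points at once): an extreme point is never eligible, so $E$ again survives each round, which in turn keeps $\conv(B \setminus S) = \conv(B)$ whenever a batch $S$ of eligible (hence non-extreme) points is removed, and the intrinsic characterization of the terminal set again pins it to $E$. Thus both operations output $E$, proving their equivalence.
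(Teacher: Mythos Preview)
Your proposal is correct and follows essentially the same strategy as the paper: both arguments hinge on the invariance $\conv(B)=\conv(A)$ throughout the removal process, then use the standard characterization of extreme points to force the terminal set to coincide with $E$. Your treatment is somewhat more explicit---you establish invariance forward (each removal preserves the hull) whereas the paper argues backward (each removed point lies in the hull of what comes after), and you address the order-independence (``confluence'') issue directly, which the paper leaves implicit---but the underlying ideas are the same.
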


\begin{proof}
    To show that the set from 2 is contained in the set from 1, note that every point that is a convex combination of the other points cannot be an extreme point.

    Next, we will show the set of points from 1 is contained in the set of points from 2 (no matter the order in which we remove points). Refer to the set of the remaining set of points as $B$. Note all the points in $A$ were a convex combination of the points in $B$. This can be seen inductively: The last point to be removed was a convex combination of $B$, the second to last a convex combination of $B$ and the last point, and so forth. Since the last point was a convex combination of $B$, the second to last is also a convex combination of $B$. Thus, by induction, every point is in the convex combination of $B$.
    
    Every point remaining in $B$, after removing as many points as possible, cannot be a convex combination of the other vectors in $A$. If it were, then it would be a convex combination of points in $B$ and we could remove it. Thus each point in $B$ is not in the convex combination of the other points and is thus an extreme point in the convex hull of $B$. Since the convex hull of $B$ is the same as that of $A$, each of these points is an extreme point. Thus the set from operation 1 is always contained in the set from operation 2, and so they are the same set.

    \end{proof}

\section*{Acknowledgment}
The authors would like to thank Polly Yu, Murad Banaji, Yuzhen Fan, Aria Masoomi, David Massey, and Zahra Aminzare for helpful comments and conversations about the paper.
This work was partially supported by grants AFOSR FA9550-21-1-0289 and NSF/DMS-2052455.

Generative AI was used to improve the readability of the paper.

\bibliographystyle{unsrt}
\bibliography{references}

\end{document}